\documentclass[12pt,reqno]{amsart}

\usepackage{eucal,color,graphicx,subfigure,mathrsfs}
\usepackage{amssymb,amsmath,amsbsy,,amsfonts,latexsym}

\setlength{\topmargin}{-0.4in}
\setlength{\textheight}{9.0in}     %8.5
\setlength{\textwidth}{6.5in}      % {6truein}
\setlength{\oddsidemargin}{.20in}  %
\setlength{\evensidemargin}{.20in} %

\newcommand{\vf}[1]{\boldsymbol {#1}}

\numberwithin{equation}{section}

\newtheorem{theorem}{Theorem}[section]
\newtheorem{lemma}[theorem]{Lemma}
%[section]
%\newtheorem{proposition}[theorem]{Proposition}%[section]

\theoremstyle{definition}

\theoremstyle{remark}
\newtheorem{remark}[theorem]{Remark}

\numberwithin{equation}{section}

% ALL REVISIONS AFTER  SUBMISSION TO BE MARKED USING THIS 
% \newcommand{\rev}[1]{{{#1}}} 

%    Absolute value notation

%    Blank box placeholder for figures (to avoid requiring any
%    particular graphics capabilities for printing this document).

\newcommand{\vertiii}[1]{{\left\vert\kern-0.25ex\left\vert\kern-0.25ex\left\vert #1 
    \right\vert\kern-0.25ex\right\vert\kern-0.25ex\right\vert}}

%\newcommand{\vpid}{\varPi_r^\mathrm{div}}

% \newcommand{\uuhr}{\UU_{\!h}^{(r)}}
% \newcommand{\Tr}{\smash[t]{T^{(r)}}}
% \newcommand{\Vr}{\smash[t]{V^{(r)}}}
% \newcommand{\Vhr}{\smash[t]{V_h^{(r)}}}

% \newcommand{\DE}{\mathrm{Disc.Err.}}
% \newcommand{\BAE}{\mathrm{Proj.Err.}}

%\newcommand{\trn}{\mathrm{trn}_{\d\oh}}

%\newcommand{\Hdiv}[1]{H(\mathrm{div},#1)}
%\newcommand{\Hcurl}[1]{H(\mathrm{curl},#1)}

%\newcommand{\dive}{\ensuremath{\mathop{\mathrm{div}}}}
%\newcommand{\grad}{\ensuremath{\mathop{\mathrm{grad}}}}
%\newcommand{\curl}{\ensuremath{\mathop{\mathrm{curl}}}}

%{\tilde{p}}

%\DeclareMathOperator{\curl}{curl}

%%%%%%%%%%%%%%%%%%%%%%%%%%%%%%%%%%%%%%%%%%%%%%%%%%%%%%
%
% fonts
% -----
\newcommand{\bld}[1]{\boldsymbol{#1}}

%
% domains and related things
% --------------------------
\newcommand{\dO}{{\partial \Omega}}

%
% meshes and related things
% -------------------------
\newcommand{\Oh}{{\mathcal{T}_h}}
\newcommand{\dOh}{{\partial \Oh}}
\newcommand{\dK}{{\partial K}}

\newcommand{\Eh}{{\mathcal{E}_h}}           
\newcommand{\Ehi}{{\mathcal{E}^o_h}}

%
% local solvers
% -------------

%
% jumps and averages
% ------------------

%\newcommand{\bpol}[2]{\bld{\mathcal{P}}_{#1}(#2)}

% differential operators
% ----------------------

\newcommand{\divs}{\mathop{\nabla\cdot}}

%\newcommand{\grad}{\mathop{\nabla}}
%
% norms
% -----

%
% integrals
% ---------
\newcommand{\bint}[3]{\langle#1,#2\rangle_{#3}}
%
% sets
% ---- 

%
% spaces of functions
% -------------------
\newcommand{\pol}[2]{\mathcal{P}_{#1}(#2)} 
\newcommand{\bpol}[2]{\bld{\mathcal{P}}_{#1}(#2)}

%
% projections
% -----------

%

%

%

%_{\scriptscriptstyle{\mrm{G}}}}
\newcommand{\pv}{\bld{\varPi}_h}%_{\scriptscriptstyle{\bld{V}}}}
%_{\scriptscriptstyle{P}}}
%
% errors
% ------
\newcommand{\eq}{\bld{\varepsilon}^{{\ensuremath{\scriptscriptstyle{\bld{q}}}}}_h}
\newcommand{\eu}{\varepsilon^{{\ensuremath{\scriptscriptstyle{u}}}}_h}
\newcommand{\euhat}{{\varepsilon}^{{\ensuremath{\scriptscriptstyle{\,\widehat{{u}}}}}}_h}

\title{An analysis of HDG methods for convection--dominated diffusion problems}

\author{Guosheng Fu}
\address{School of Mathematics, University of Minnesota,
Minneapolis, MN 55455, USA}
\email{fuxxx165@math.umn.edu}

\author{Weifeng Qiu}
\address{Department of Mathematics, City University of Hong Kong,
83 Tat Chee Avenue, Kowloon, Hong Kong, China}
\email{weifeqiu@cityu.edu.hk}

\author{Wujun Zhang}
\address{Department of Mathematics, University of Maryland,
College Park, MD 20742-4015, USA}
\email{wujun@umd.edu}

\begin{document}

\markboth{ G. Fu, W. Qiu, W. Zhang}{HDG methods for convection--dominated diffusion problems}
\begin{abstract}
We present the first a priori error analysis of the $h$--version of the hybridizable discontinuous Galkerin (HDG) methods applied to convection--dominated diffusion problems. 
We show that, when using polynomials of degree no greater than $k$, the $L^2$--error of the scalar variable 
converges with order $k + 1/2$ on general conforming quasi--uniform simplicial meshes, just as for conventional DG methods.
We also show that the method achieves the optimal $L^2$--convergence order of $k+1$ on special meshes. 
Moreover, we discuss a new way of  implementing the HDG methods for which
 the spectral condition number of the global matrix is independent of the diffusion 
coefficient. Numerical experiments are presented which verify our theoretical results.
\end{abstract}
\maketitle

\section{Introduction}
In this paper, we present the first a priori error analysis of the $h$--version
 of the HDG methods for the following convection--dominated diffusion model problem:
\begin{subequations}
\label{cd_eqs}
\begin{align}
\label{cd_1}
-\epsilon\Delta u + \boldsymbol{\beta}\cdot \nabla u = &\; f  \quad \text{ in $\Omega$, }
\\
u = &\; g \quad \text{ on $\partial \Omega$, }
\end{align}
\end{subequations}
where $\Omega \in \mathbb{R}^d$ ($d = 2,3$) is a polyhedral domain, $\epsilon \ll |\bld \beta|_{L^\infty(\Omega)}$, $f\in L^2 (\Omega)$ and $g\in H^{1/2}(\partial \Omega)$. 
As in  \cite{AyusoMarini:cdf}, we assume that the velocity field $\boldsymbol{\beta}\in W^{1,\infty}(\Omega)$ has neither closed curves nor
stationary points, i.e.,
\begin{align}
\label{assump_beta0}
 \bld\beta \in W^{1,\infty}(\Omega) \text{ has no closed curves},\quad \bld \beta(\bld x)\not=\bld 0\quad\forall\bld x\in\Omega.
\end{align}
This implies that there exists 
a smooth function $\psi$ %$\psi \in W^{k+1, \infty} (\Omega)$ 
so that 
%\begin{subequations}
\begin{align}
\label{beta_assumps}
\boldsymbol{\beta}\cdot \nabla\psi(x) & \geq b_{0}\qquad \forall x\in \Omega,
\end{align}
%\end{subequations}
for some constant $b_0 > 0$, see \cite{DevinatzEllisFriedman} or \cite[Appendix A]{AyusoMarini:cdf} for a proof.
We also assume that 
\begin{align}
\label{assump_beta1}
 - \divs \bld\beta(\bld x) \geq 0\quad \forall \bld x\in\Omega,
\end{align}
which means that the ``effective'' reaction is non--negative 
since  
\[
%\left( \bld \beta\cdot \nabla u\right) u =\; \divs \left( \bld \beta u^2/2\right) -(\divs \bld \beta)u^2/2.
 \bld \beta\cdot \nabla u=\; \divs \left( \bld \beta u\right) -(\divs \bld \beta)u.
\]
Let us remark that assumption \eqref{assump_beta0} ensures the well--posedness
of the continuous problem in the pure hyperbolic limit ($\epsilon = 0$), see \cite[Chapter 3]{Goering83}
for details. 
It is also well--known \cite{Eckhaus72,Goering83} that solutions to the problem \eqref{cd_eqs} may develop layers, whose approximation
is the major difficulty of designing high--order, robust numerical schemes. 
We refer to \cite{Roos08,Roos12} for a comprehensive information on different numerical  techniques for \eqref{cd_eqs}.

%Since 1970's when discontinuous Galerkin (DG) method was first introduced in \cite{ReedHill73} for hyperbolic equations, 
%there has been an active development of DG methods for hyperbolic and nearly hyperbolic problems. 
%While the methods are successfully applied in scientific and engineering computing, a main disadvantage, compared to other methods, is that DG methods require a higher number 
%of globally--coupled degrees of freedom for the same mesh.
In the last decade, the discontinuous Galerkin methods \cite{Cockburn99,CockburnShu01}  
%have become more and more popular, and
have been extensively considered for convection--diffusion equations. 
For example, see the local discontinuous Galerkin (LDG) methods \cite{CockburnShu98,CockburnDawson00,
HoustonSchwabSuli2002,CastilloCockburnSchotzauSchwab02,CockburnDong07}, the method of Baumann and Oden \cite{BaumannOden99}, 
the  interior--penalty discontinuous Galerkin (IP--DG) methods
\cite{ZarinRoos2005,AyusoMarini:cdf}, the multiscale discontinuous Galerkin method \cite{HughesScovazziBochevBuffa2006,
BuffaHughesSangalli2006}, the mixed--hybrid--discontinuous Galerkin (MH--DG) method \cite{Egger2010}, and the HDG methods
 \cite{CockburnDongGuzmanMarcoRiccardo09,NguyenPeraireCockburnHDGLCD09,NguyenPeraireCockburnHDGNCD09}. 
%The references listed here represent
%the authors' own intereses and are by no means exaustive.
On the other hand, for steady--state problems, the main disadvantage of conventional DG methods, compared to other methods, is that they require a higher number 
of globally--coupled degrees of freedom for the same mesh.
In order to address this issue, the HDG methods 
were introduced in \cite{CockburnGopalakrishnanLazarov09} in the framework of second--order uniformly elliptic problems.
The methods are such that the globally--coupled degrees of freedom are only those of the numerical traces on the mesh skeleton. A  similar idea was used 
in \cite{Egger2010} to obtain the MH--DG method. Hence, the use of the hybridization technique eliminates the main disadvantage
of DG methods to a significant extent.

In \cite{CockburnDongGuzmanSFH08,CockburnGopalakrishnanSayas09}, 
it was shown that, for the purely diffusive model problem, the numerical approximation of HDG methods achieves the same order of convergence 
as that of mixed methods.
% by using 
%a projection based error analysis based on an energy argument and a duality argument. 
More precisely, when using polynomials of degree no greater than $k$, the $L^2$--error for both the scalar  and 
flux approximation converges optimally with order
$k+1$, and a postprocessed scalar approximation converges with  order $k+2$ for $k\ge 1$. 
%These results indicate that HDG methods outperform conventional DG methods in terms of implementation and 
%numerical accuracy for the pure elliptic problems. 
Recently in \cite{ChenCockburnHDGI,ChenCockburnHDGII}, 
similar results have been proven for the convection--diffusion equation when 
the diffusion coefficient is  comparable to the convection coefficient, with 
 variable--degree approximations and nonconforming meshes.

In this work, we focus on the analysis of the convection--dominated case, that is, when
$\epsilon\ll |\bld \beta|_{L^\infty(\Omega)}$. 
We show that for the HDG methods using polynomial degree $k\ge 1$ with a suitably chosen stabilization function,
 we have, for general meshes, that
\begin{equation}
\label{our_result1}
\Vert u_h-u\Vert_{L^{2}(\Omega)} \le C\, h^{k}(\epsilon h^{-1/2}+\epsilon^{1/2} +h^{1/2})\vert u\vert_{H^{k+1}(\Omega)},
\end{equation}
and, for meshes (almost) aligned with the direction of $\bld \beta$, that
\begin{equation}
\label{our_result2}
\Vert u_{h}-u\Vert_{L^{2}(\Omega)} \le C\, h^{k}(\epsilon h^{-1/2}+\epsilon^{1/2} +h)\vert u\vert_{H^{k+1}(\Omega)},
\end{equation}
where $C$ is a  constant independent of $\epsilon$ and $h$.
Note that if $\epsilon \leq \mathcal{O}(h^2)$, we obtain optimal convergence for $\|u_h-u\|_{L^2(\Omega)}$
in \eqref{our_result2}, which can be considered as an extension of a similar result for the pure hyperbolic case 
\cite{CockburnDongGuzman2008,CockburnDongGuzmanQian2010}.
We also show that, with a suitably chosen stabilization function,  the condition number of the global matrix for the scaled
numerial traces is $\mathcal{O}(h^{-2})$, independent of $\epsilon$.

To prove these estimates, we cannot use the approach used in \cite{ChenCockburnHDGI,ChenCockburnHDGII} 
because the constants in the error estimates in \cite{ChenCockburnHDGI,ChenCockburnHDGII} 
may blow up as $\epsilon$ approaches $0$. For example, the constant $\Upsilon_K^{\max}$ in \cite[Theorem 2.1]{ChenCockburnHDGI} 
is of order 
$\mathcal{O}(\epsilon^{-1})$.
%, and the constant in the duality estimate is of order $\mathcal{O}(\epsilon^{-1})$. 
%Actually, we expect to loose superconverges for the scalar variable as $\epsilon$ goes to $0$.
In order to obtain an estimate that is robust with respect to $\epsilon$, we need to  modify the 
energy argument used in \cite{ChenCockburnHDGI,ChenCockburnHDGII} by using test functions 
similar to that used in \cite{Johnson86,AyusoMarini:cdf}. In \cite{Johnson86}, a weighted 
test function was used to obtain the $L^2$--stability of the original DG method \cite{ReedHill73} for the pure hyperbolic equation. 
In \cite{AyusoMarini:cdf}, the idea was extended to convection--diffusion--reaction equations using the IP--DG method. 
We also need to use a new projection to obtain error estimates with less restrictive regularity assumptions.

Next,  we would like to compare our results with those obtained for the IP--DG method in \cite{AyusoMarini:cdf}.
Our convergence result for $\|u_h-u\|_{L^2(\Omega)}$ on general meshes is the same as that in \cite{AyusoMarini:cdf}, while
the optimal convergence on special meshes is new. Also, our method has 
less globally--coupled degrees of freedom, and  our choice of the stabilization function is 
determined clearly in the numerical formulation; there is no need to choose it empirically as in the IP--DG method.

Now, let us compare our results with those for the MH--DG method \cite{Egger2010}.
The MH--DG method uses a combination of upwind techniques used in  DG methods for hyperbolic problems with conservative discretizations of mixed methods for elliptic problems. 
To the best of  our knowledge, \cite{Egger2010} 
is the first paper which utilizes hybrid formulations for the mixed and  DG methods to make them compatible. 
We show that our method is quite similar to the MH--DG method. Actually, in Appendix~\ref{appendix-1}, we show that, using the 
same approximation spaces as those of the MH--DG method, the HDG method becomes 
exactly the same as the MH--DG method by suitably choosing the stabilization function. The new features of our analysis with respect to that of  
\cite{Egger2010} are that we can deal with variable velocity 
field $\bld \beta$ and that we 
have an estimate of $\|u_h-u\|_{L^2(\Omega)}$, which is not obtained in \cite{Egger2010}. 
Moreover, we prove that the condition number for the global linear system 
can be rendered to be 
independent of $\epsilon$ and of order $\mathcal{O}(h^{-2})$. 

A well--known stabilization technique for convection--dominated diffusion problems in the finite element method literature 
is  residual--based stabilization, see the
SUPG  \cite{BrooksHughes} and  residual--free bubbles \cite{BrezziHughesMariniRussoSuli1999,BrezziMariniSuli2000} methods. 
The main disadvantages of 
residual--based stabilization are that they are not locally conservative and that the performance of the methods relies heavily 
on a proper choice of the stabilization parameter, which might be hard to determine or expensive to compute.
%Actually, SUPG is among the first successful finite element methods that gives reasonable approximations for convection--dominated 
%problems using quite coarse meshes.  
We refer readers to \cite{HoustonSchwabSuli2000} for a detailed comparison of the hp--version of the DG methods with 
the SUPG methods in the pure hyperbolic case, and to \cite{Egger2010} for a detailed comparision of the MH--DG method with the SUPG methods for 
the convection--diffusion case. 

%Also, the stability results
%in our analysis is stronger than that in \cite{Egger2010} when $\epsilon = \mathcal{O}(h^2)$. 
%In addition, there is no analysis of whether the condition number of global stiffness matrix depends on $\epsilon$ in \cite{Egger2010}.
%Thus, we conclude that it is more efficient to use HDG method with the second example of stabilization function to solve 
%convection dominated diffusion problem.

The rest of the paper is organized as follows. In Section $2$, we introduce the HDG method and state and discuss the main theoretical results. 
In Section $3$, we give a characterization of the HDG method,
and show that, after scaling, the condition number of the global matrix is 
independent of $\epsilon$. 
In Section $4$, we present the convergence analysis of 
the HDG method. Finally, in Section $5$, we display numerical experiments which verify our theoretical results. 
%And finally in Appendix, we prove the relation between MH--DG and HDG, and prove the estimates for the condition number.

\section{The HDG method and  main results}
In this section, we present the HDG method  and state and discuss 
our main theoretical results.
%\subsection{Preliminary assumptions on the PDE}
%We assume the velocity field $\boldsymbol{\beta}$ is $\mathcal{O}(1)$ in the domain
% $\Omega$ that satisfies assumptions \eqref{assump_beta0} and \eqref{assump_beta1}.
%We also assume that $\epsilon \ll 1$, that is, we are in the convection dominated region.
\subsection{The mesh}
Let $\Oh$ be a conforming, quasi--uniform simplicial 
triangulation of $\Omega$.
Given an element (triangle/tetrahedron) $K \in \Oh$, which we assume to be an
open set,
$\dK$ denotes the set of its edges in the two
dimensional case and of its faces in the three dimensional case.
Elements of $\dK$ will be generally referred to as faces,
{regardless of the spatial dimension}, and denoted by $F$. The set of all
{(interior)}
faces of the triangulation will be denoted $\Eh$ {($\Ehi$)}. 
We distinguish functions defined on the faces of the triangulation (the
skeleton) by saying that they are defined on $\Eh$ from functions
defined on the boundaries of the elements (and therefore having the
ability to display two different values on interior faces) by saying
that they are defined on $\dOh$. Hence the spaces $L^2(\Eh)$ and
$L^2(\dOh)$ have different meanings.
For each element $K\in\mathcal{T}_h$, we set $h_K := |K|^{\frac{1}{d}}$,  and for each
of face $F$, $h_F := |F|^{\frac{1}{d-1}}$, where $|\cdot|$ denotes the Lebesgue measure in $d$ or $d -1$ dimensions.
{We define $h = \max_{K\in\mathcal{T}_{h}}h_{K}$.} 
%{\em As usual, we always assume the mesh 
%is a conforming quasi--uniform shape--regular simplicial one, which is not stated in the statement of the theorems and lemmas for simplicity.}

Moreover, we also consider special meshes that satisfy the following assumption: there exists a constant $C$ so that
\begin{align}
\label{mesh_assumps}
\max(\sup_{x\in F}\boldsymbol{\beta}(\bld x)\cdot\boldsymbol{n},0)\leq Ch_{K}, \quad \forall F \in \partial K \setminus F_{K}^{+}, \forall K\in\Oh,
\end{align}
where $F_K^+$ is the face of $K$ such that
$
\sup_{x\in F^+_K}\boldsymbol{\beta}(\bld x)\cdot\boldsymbol{n} = \max_{F\in \dK} \sup_{x\in F}\boldsymbol{\beta}(\bld x)\cdot\boldsymbol{n}.
$
% called the outflow face of $K$. 
%Roughly speaking, this assumption says that for each element $K$, besides $F_K^+$
%we only allow one outflow face up to an $\mathcal{O}(h)$ perturbation.
These meshes have been introduced in \cite{CockburnDongGuzman2008} (see also \cite{CockburnDongGuzmanQian2010})
for the analysis of the original DG method. In appendix~\ref{section_appb}, we sketch how  to generate 
a triangulation satisfying assumption (\ref{mesh_assumps}). 

\subsection{The HDG method}
In order to define the HDG method, 
we  first rewrite our model problem
 (\ref{cd_eqs}) as the following first-order system by introducing $\bld q = -\epsilon \nabla u$ as a
new unknown:
\begin{subequations}
\label{cd_first_order}
\begin{align}
\label{cd_first_order1}
\epsilon^{-1}\boldsymbol{q} + \nabla u & = 0\text{ in }\Omega,\\
\label{cd_first_order2}
\nabla\cdot \boldsymbol{q} + \boldsymbol{\beta}\cdot \nabla u & = f \text{ in }\Omega,\\
\label{cd_first_order3}
u & = g \text{ on }\partial\Omega.
\end{align}
\end{subequations}

Let us also define  the following finite element spaces:
\begin{subequations}
\label{fem_spaces}
\begin{align}
\label{fem_space1}
\boldsymbol{V}_{h}&=\{\boldsymbol{r}\in L^{2}(\Omega;\mathbb{R}^{d}):\boldsymbol{r}|_{K}\in P_{k}(K;\mathbb{R}^{d})
\quad \forall K\in \mathcal{T}_{h}\},\\
\label{fem_space2}
W_{h}&=\{w\in L^{2}(\Omega):w|_{K}\in P_{k}(K)
\quad \forall K\in \mathcal{T}_{h}\},\\
\label{fem_space3}
M_{h}&=\{\mu\in L^{2}(\mathcal{E}_{h}):\mu |_{F} \in P_{k}(F)
\quad \forall F\in \mathcal{E}_{h}\},\\
\label{fem_space4}
M_{h}(g)&=\{\mu\in M_{h}: \langle \mu, \xi \rangle_{\partial\Omega} = \langle g , \xi \rangle_{\partial\Omega}  \quad \forall \xi\in M_{h}\},
\end{align}
\end{subequations}
where $P_{k}(D)$ is the space of polynomials of total degree not larger than $k\ge 0$ defined on $D$, and 
\[
 \langle \xi, \eta \rangle_{\partial\Omega} = \sum_{F\in \dO}\int_{F}\xi\,\eta\,\mathrm{ds}.
\]

The HDG method seeks an approximation $(\boldsymbol{q}_{h},u_{h},\widehat{u}_{h})\in 
\boldsymbol{V}_{h}\times W_{h}\times M_{h}$ by requiring that
\begin{subequations}
\label{cd_hdg_eqs}
\begin{align}
\label{cd_hdg_eq1}
(\epsilon^{-1}\boldsymbol{q}_{h},\boldsymbol{r})_{\mathcal{T}_{h}} -(u_{h},\nabla\cdot \boldsymbol{r})_{\mathcal{T}_{h}}
+\langle \widehat{u}_{h},\boldsymbol{r}\cdot \boldsymbol{n}\rangle_{\partial\mathcal{T}_{h}}&=0,\\
\label{cd_hdg_eq2}
-(\boldsymbol{q}_{h}+\boldsymbol{\beta}u_{h},\nabla w)_{\mathcal{T}_{h}} - (\nabla\cdot\boldsymbol{\beta}u_{h},w)_{\mathcal{T}_{h}}
+\langle (\widehat{\vf q}_{h}+\widehat{\boldsymbol{\beta}u_{h}})\cdot \boldsymbol{n}, w\rangle_{\partial\mathcal{T}_{h}}&=(f,w)_{\mathcal{T}_{h}},\\
\label{cd_hdg_eq3}
\langle \widehat{u}_{h} , \mu\rangle_{\partial\Omega}&=\langle g , \mu \rangle_{\partial\Omega},\\
\label{cd_hdg_eq4}
\langle (\widehat{\vf q}_{h}+\widehat{\boldsymbol{\beta}u_{h}})\cdot \boldsymbol{n} , \mu \rangle_{\partial\mathcal{T}_{h}\backslash\partial\Omega}&=0,
\end{align}
\text{for all $(\boldsymbol{r},w,\mu)\in \boldsymbol{V}_{h}\times W_{h}\times M_{h}$, where 
%the normal component of 
the numerical trace 
$(\widehat{\vf{q}}_{h}+\widehat{\boldsymbol{\beta}u_{h}})\cdot\bld n$ is given by}
\begin{equation}
\label{cd_hdg_eq5}
(\widehat{\vf{q}}_{h}+\widehat{\boldsymbol{\beta}u_{h}})\cdot\bld n=\boldsymbol{q}_{h}\cdot\bld n+\boldsymbol{\beta}\cdot\bld n\,
\widehat{u}_{h}+\tau (u_{h}-\widehat{u}_{h}) 
\text{  on }\partial\mathcal{T}_{h},
\end{equation}
\end{subequations}
and {\em the stabilization function $\tau$ is piecewise, nonnegative constant defined on $\dOh$}.
{Here we write $\left(\eta,\zeta\right)_{\mathcal{T}_{h}} := \sum_{K \in \mathcal{T}_{h}} \int_K \eta\,\zeta\,\mathrm{dx},$ 
%where $(\eta,\zeta)_D$ denotes the integral of $\eta\zeta$ over the domain $D \subset \mathbb{R}^n$. We also write
and 
$\langle \eta, \zeta \rangle_{\partial\mathcal{T}_{h}} := \sum_{K \in \mathcal{T}_{h}} 
\int_\dK \eta\,\zeta\,\mathrm{ds}$.
%where $\langle \eta \,,\,\zeta \rangle_{D}$ denotes the integral of $\eta \zeta$ over the domain $D \subset \mathbb{R}^{n-1}$.}
In Section~\ref{sec:hybrid}, we show that the linear system \eqref{cd_hdg_eqs} can be efficiently implemented
 so that the only global unknowns are related to 
the numerical trace $\widehat{u}_{h}$. 

The HDG method \eqref{cd_hdg_eqs} has a unique solution provided that the stabilization function $\tau$ 
in (\ref{cd_hdg_eq5}) satisfies the following assumption:
\begin{align}
\label{assump_tau_00}
\inf_{\bld x\in F}\left(\tau - \dfrac{1}{2} \boldsymbol{\beta(x)}\cdot \boldsymbol{n}\right)\ge 0, \quad
\forall F\in\dK,\forall K\in\mathcal{T}_{h},
\end{align}
where in each element, the strict inequality holds at least on one face; see \cite[Theorem 3.1]{NguyenPeraireCockburnHDGLCD09} for a proof.

\subsection{Assumptions on the stabilization function}
Next, we present our assumptions on the stabilization function $\tau$ 
verifying the inequality \eqref{assump_tau_00}. We then construct two examples satisfying them.

To do that, we need to introduce some notation.
%Given a stabilization function $\tau$ satisfies \eqref{assump_tau_00}, we define some quantities related to $\tau$ that will be used in our analysis.
Let $F_K^\star$ be the face of $K$ on which $\tau$ attains its maximum, and $F_K^s$ be the face of $K$ on which $\inf_{\bld x\in F}\left( \tau - \frac{1}{2}\bld \beta(\bld x)\cdot\bld n\right)$
attains its maximum, that is,
\begin{subequations}
 \label{notation_F}
\begin{align}
\tau(F_K^\star) &: =\; \max_{F\in\dK} \tau(F)& \forall K\in \Oh,\\
\inf_{\bld x\in F_K^s}\left( \tau - \frac{1}{2}\bld \beta(\bld x)\cdot\bld n\right) &: =\; 
 \max_{F\in \dK}\inf_{\bld x\in F}\left( \tau - \frac{1}{2}\bld \beta(\bld x)\cdot\bld n\right)& \forall K\in \Oh,
\end{align}
\end{subequations}
%We define the following union of faces,
%\begin{subequations}
% \label{notation_UF}
%\begin{align}
%\label{UF1}
% \partial \mathcal{T}_h^\star &: =\; \cup_{K\in \Oh}\cup_{F\in \dK\backslash F_K^\star} F,\\
%\label{UF2}
% \partial \mathcal{T}_h^s &: =\; \cup_{K\in \Oh}F_K^s,
%\end{align}
%\end{subequations}
and set 
\begin{subequations}
%\label{tttt}
\begin{align*}
 \tau_K^w : =  &\;\max_{F\in\dK\backslash F_K^\star} \tau(F), & \tau^w : = \max_{K\in \Oh} \tau_K^w,\\
 \tau_K^{\bld v} : = &\;\inf_{\bld x\in F_K^s}\left( \tau - \frac{1}{2}\bld \beta(\bld x)\cdot\bld n\right), & \tau^{\bld v} : = \min_{K\in \Oh} \tau_K^{\bld v}.
\end{align*} 
\end{subequations}

%Now, we present some restrictions on $\tau$ that will be used throughout the paper.
We assume that there exists universal positive constants $C_0, C_1,C_2$ so that
\begin{subequations}
 \label{assumption_tau_general}
\begin{align}
\label{assumption_tau_0}
\tau_K^w \le &\;C_0&\forall K\in \Oh, \\
\label{assumption_tau_3}
%\tau_K^{\bld v} \ge &\;C_1\frac{\epsilon}{h_{K}}& \forall K\in \Oh,\\
\tau_K^{\bld v} \ge &\;C_1\min (\frac{\epsilon}{h_{K}}, 1) & \forall K\in \Oh, \\
\label{assumption_tau_1}
\inf_{\bld x\in F}\left( \tau - \frac{1}{2}\bld \beta(\bld x)\cdot\bld n\right) \ge &\;C_2 \max_{x\in F}\left |\bld \beta(\bld x)\cdot\bld n\right |& \forall 
F\in \dK, \forall K\in \Oh.
\end{align}
\end{subequations}

In order to get an improved estimate, % (Theorem~\ref{MainTh2}),  
we need to replace \eqref{assumption_tau_0} by
 the following, more restrictive assumption 
on $\tau_K^w$: assume there exists a positive constant $C$ so that
\begin{align}
 \label{assump_tau_s}
\tau_K^w \le &\;C h_K \quad\quad \forall K\in\Oh. 
\end{align}
We remark this assumption might not be compatible with \eqref{assumption_tau_1} on general meshes, but
it can hold for the meshes that satisfy assumption \eqref{mesh_assumps}.

Now, let us show that it is quite easy to construct $\tau$  satisfying assumptions \eqref{assumption_tau_general} 
%(and assumption \eqref{assump_tau_s} for special meshes) 
by displaying two of them. 
%The first one, which was first considered in \cite{Egger2010}, needs a very mild assumption on $\bld \beta$ and the mesh, while
%the second one, which seems to be new, needs no further assumptions. 
The first  example of the stabilization function is
\begin{align}
 \label{tau1}
\tau_1(F) = \max(\sup_{\bld x \in F} \,\bld \beta(\bld x)\cdot \bld n,0),\quad \forall F\in \dK, \forall K\in \Oh.
\end{align}
Assumptions \eqref{assumption_tau_0} and \eqref{assumption_tau_1} are always satisfied, and assumption \eqref{assumption_tau_3} holds 
provided 
\begin{align*}
%\label{assump_beta}
\max_{F\in \dK}\inf_{\bld x\in F}\left(- \bld \beta(\bld x)\cdot\bld n \right)\ge &\;C & \forall K\in \Oh,
\end{align*}
for some positive constant $C$; this is true, for example, for piecewise-constant $\bld \beta$.
%Moreover, if $\bld \beta$ is piecewise constant, 
%which is the case consided in \cite{Egger2010},
%assumption \eqref{assump_beta} is always true for shape--regular meshes. 
Moreover,  assumption
\eqref{assump_tau_s} is also satisfied if the mesh satisfies \eqref{mesh_assumps}.

The second  example  is
\begin{align}
\label{tau2}
\tau_2(F) = \max(\sup_{\bld x \in F} \,\bld \beta(\bld x)\cdot \bld n,0) + \min (\rho_0\frac{\epsilon}{h_K}, 1 ),\quad \forall F\in \dK, \forall K\in \Oh,
\end{align}
where $\rho_0 >0$ is a constant typically chosen to be less than or equal to $1$. 
Assumptions \eqref{assumption_tau_general} are always satisfied in this case. 
Moreover, assumption \eqref{assump_tau_s} is satisfied
provided the mesh satisfies \eqref{mesh_assumps} and we take $\epsilon \leq \mathcal{O}(h^2)$.

Let us conclude the discussion on $\tau$ by remarking that if we replace $\max(\sup_{\bld x \in F} \,\bld \beta(\bld x)\cdot \bld n,0)$ with 
$\sup_{\bld x \in F} |\,\bld \beta(\bld x)\cdot \bld n|$ in the definition of $\tau$ in \eqref{tau1} and \eqref{tau2}, assumptions \eqref{assumption_tau_general}
will be satisfied, while \eqref{assump_tau_s} is no longer true for the special meshes. 
%{\em However, our numerical results not presented in this paper shows that 
%there is no significant difference for this change.}

%{The second example of the stabilization function  is defined as follows,
%\begin{align}
%\label{tau2}
%\tau_2(F) = \max(\sup_{\bld x \in F} \,\bld \beta(\bld x)\cdot \bld n,0) + \min (\rho_0\frac{\epsilon}{h_F}, 1 ),\quad \forall F\in \dK, \forall K\in \Oh,
%\end{align}
%where $\rho_0 >0$ is a constant typically chosen to be less than $1$. Since we are in the region that $\epsilon \ll h$, we can always
%choose $\rho_0$ small enough so that  $\min (\rho_0\frac{\epsilon}{h_F}, 1 ) = \rho_0\frac{\epsilon}{h_F}$. Hence, assumptions \eqref{assumption_tau_general}
%are satisfied by the definition of $\tau_2$. Moreover, if $\epsilon = \mathcal{O}(h^2)$, we have $\min (\rho_0\frac{\epsilon}{h_F}, 1)  \le C h $, hence 
%\eqref{assump_tau_s} is also satisfied for the special mesh.
%}

\subsection{The main theoretical results}

From now on, we use $C$ to denote a generic constant, which may be dependent on the polynomial degree $k$, 
%the shape regularity constant of the elements,
and/or the velocity field $\boldsymbol\beta$. The value $C$ at different occurrences may differ.

We proceed to state our main theoretical results. 
We will show convergence estimates in the following norm
\begin{align*}
% \label{norm_ee}
\vertiii{ (\boldsymbol{r}, w, \mu)}_{e} := \left(\|\epsilon^{-1/2}\boldsymbol{r}\|^2_{\mathcal{T}_h} + \|w\|^2_{\mathcal{T}_h} + 
\left\| |\tau-\frac{1}{2} \boldsymbol{\beta}\cdot \boldsymbol{n}|^{1/2}(w-\mu)\right\|_{\partial\mathcal{T}_h}^2 \right)^{1/2},
\end{align*}
where $\|\cdot\|_{D}$ is the standard $L^2$--norm in the domain $D$.

\begin{theorem}
\label{MainTh1}
Let $(\bld q, u)$ be the solution to the boundary--value problem \eqref{cd_first_order}, 
and let $(\bld q_h, u_h,\widehat u_h)$ be the solution to the HDG method 
\eqref{cd_hdg_eqs} where the stabilization function $\tau$ satisfies assumptions \eqref{assumption_tau_general}.
Then, there exists $h_{0}$, independent of $\epsilon$, 
such that when $h<h_{0}$, we have
\begin{align*}
%\label{main1}
& \vertiii{(\bld q-\bld q_h, u-u_h, u-\widehat{u}_h)}_e \\
\nonumber
\le & \; C \epsilon^{1/2}h^{s_{\bld v}+1/2}(\epsilon^{1/2}+h^{1/2})|u |_{H^{s_{\bld v}+2}(\Oh;\mathbb{R}^d)} 
+  C h^{s_{w}+1/2}|u |_{H^{s_{w}+1}(\Oh)},
\end{align*}
for all $s_{\bld v}\in [0,k]$ and $s_w \in [0,k]$. 

%Moreover, if $k\ge 1$, $u\in H^{k+1}(\Omega)$ and $\epsilon = \mathcal{O}(h)$, we have
%\[
% \|{u-u_h}\|_{\Oh} \le \; 
% C h^{k+1/2}|u |_{H^{k+1}(\Oh)},
%\]
\end{theorem}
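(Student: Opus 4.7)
The plan is to follow the projection-based energy framework standard for HDG error analysis, augmented with an exponentially weighted test-function argument in the spirit of Johnson \cite{Johnson86} and Ayuso--Marini \cite{AyusoMarini:cdf} in order to secure an $\epsilon$-robust $L^{2}$-control on the scalar error. First I would introduce a tailored HDG projection $\Pi_{h}=(\bld{\Pi}_{V},\Pi_{W})$ onto $\bld{V}_{h}\times W_{h}$ whose face-trace combination $\bld{\Pi}_{V}\bld{q}\cdot\bld{n}+\tau\,\Pi_{W}u$, tested against $M_{h}$, reproduces that of $(\bld{q},u)$ on each face; this forces the consistency residual to depend only on the interior projection errors $\bld{q}-\bld{\Pi}_{V}\bld{q}$ and $u-\Pi_{W}u$. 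Split the errors as $\bld{q}-\bld{q}_{h}=\bld{\eta}_{q}+\bld{\epsilon}_{q}$, $u-u_{h}=\eta_{u}+\epsilon_{u}$, $u-\widehat{u}_{h}=\eta_{\widehat{u}}+\epsilon_{\widehat{u}}$, and derive the error equations for the discrete parts $(\bld{\epsilon}_{q},\epsilon_{u},\epsilon_{\widehat{u}})$ by subtracting the HDG system \eqref{cd_hdg_eqs} from the exact variational identity.

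Next, I would obtain a basic energy identity by testing the error equations with $(\bld{\epsilon}_{q},\epsilon_{u},\epsilon_{\widehat{u}})$, integrating the convective contribution by parts element-by-element, and substituting the numerical flux \eqref{cd_hdg_eq5}. The left-hand side assembles into
\[
\|\epsilon^{-1/2}\bld{\epsilon}_{q}\|_{\Oh}^{2}-\tfrac{1}{2}(\nabla\cdot\bld{\beta}\,\epsilon_{u},\epsilon_{u})_{\Oh}+\bigl\||\tau-\tfrac{1}{2}\bld{\beta}\cdot\bld{n}|^{1/2}(\epsilon_{u}-\epsilon_{\widehat{u}})\bigr\|_{\dOh}^{2},
\]
each term nonnegative by \eqref{assump_beta1} and \eqref{assumption_tau_general}. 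This already delivers the flux and face-jump components of the target norm, but it provides \emph{no} coercive bound on $\|\epsilon_{u}\|_{\Oh}$ since $-\nabla\cdot\bld{\beta}$ may vanish.

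The main obstacle is therefore the recovery of an $\epsilon$-independent bound on $\|\epsilon_{u}\|_{\Oh}$. To do this I would re-test the scalar error equation with the weighted function $w_{\psi}:=P_{W}(e^{-\psi}\epsilon_{u})\in W_{h}$, where $\psi$ is the smooth function supplied by \eqref{beta_assumps} and $P_{W}$ is the $L^{2}$-projection onto $W_{h}$. A weighted integration-by-parts in the convection term produces the volume contribution
\[
\tfrac{1}{2}\bigl(\epsilon_{u}^{2},\,e^{-\psi}(\bld{\beta}\cdot\nabla\psi-\nabla\cdot\bld{\beta})\bigr)_{\Oh}\;\ge\;\tfrac{b_{0}}{2}\|e^{-\psi/2}\epsilon_{u}\|_{\Oh}^{2},
\]
from which $\|\epsilon_{u}\|_{\Oh}$ is controlled since $\psi$ is uniformly bounded on $\Omega$. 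The subtle point is that $e^{-\psi}\epsilon_{u}\notin W_{h}$, so replacing it by $w_{\psi}$ generates a commutator-type residual controlled by $\|(I-P_{W})(e^{-\psi}\epsilon_{u})\|_{\Oh}\le Ch\|\epsilon_{u}\|_{\Oh}$; this $\mathcal{O}(h)$ perturbation is absorbed into the left-hand side provided $h<h_{0}$ for some $h_{0}$ independent of $\epsilon$, giving precisely the $h_{0}$ in the theorem. The remaining residuals generated by $w_{\psi}$ are bounded by Cauchy--Schwarz together with the energy quantities already controlled in the previous step.

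Finally I would estimate the projection residuals $\bld{\eta}_{q},\eta_{u},\eta_{\widehat{u}}$ by the approximation properties of $\Pi_{h}$, using $\bld{q}=-\epsilon\nabla u$ to rewrite $H^{s_{\bld{v}}+1}$-seminorms of $\bld{q}$ as $\epsilon\,|u|_{H^{s_{\bld{v}}+2}}$ and standard scaled trace inequalities to produce the $h^{1/2}$ face factors. Combining the energy identity, the weighted $L^{2}$-bound, and these approximation estimates, the right-hand side regroups into exactly $\epsilon^{1/2}h^{s_{\bld{v}}+1/2}(\epsilon^{1/2}+h^{1/2})|u|_{H^{s_{\bld{v}}+2}(\Oh)}+h^{s_{w}+1/2}|u|_{H^{s_{w}+1}(\Oh)}$: the $\epsilon^{1/2}$ factor is inherited from measuring $\bld{q}$ in the $\epsilon^{-1/2}$-scaled norm, while the $h^{1/2}$ multiplier in the first term comes from the face jumps that arise when the weighted argument is applied to the flux residual. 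A final triangle inequality against the projection errors delivers the stated bound on the full error in the $\vertiii{\cdot}_{e}$-norm.
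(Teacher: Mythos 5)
Your overall strategy (weighted test functions in the spirit of Johnson and Ayuso--Marini to recover $\epsilon$-robust $L^2$ control of the scalar error) is the right one and matches the paper's, but two of your concrete choices would fail. First, you propose to recover $\|\epsilon_u\|_{\Oh}$ by re-testing \emph{only the scalar error equation} with $w_\psi = P_W(e^{-\psi}\epsilon_u)$. In the hybridized mixed setting this does not close: the scalar equation contains the coupling terms $-(\bld{\epsilon}_q,\nabla w_\psi)_{\Oh} + \langle\bld{\epsilon}_q\cdot\bld n, w_\psi\rangle_{\dOh} = (\nabla\cdot\bld{\epsilon}_q, w_\psi)_{\Oh}$ and $\langle\bld\beta\cdot\bld n\,\epsilon_{\widehat u}, w_\psi\rangle_{\dOh}$. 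The first can only be absorbed via an inverse inequality at the cost of a factor $\epsilon^{1/2}h^{-1}$ (forcing $\epsilon\lesssim h^2$, which Theorem~\ref{MainTh1} does not assume); the second involves $\epsilon_{\widehat u}$ itself, which is not controlled by the energy norm (only the jump $\epsilon_u-\epsilon_{\widehat u}$ is), and its cancellation requires a matching weighted \emph{trace} test function. The paper's Lemma~\ref{lemma_ideal_infsup} avoids both problems by weighting all three slots simultaneously, $(\varphi\bld q_h,\varphi u_h,\varphi\lambda_h)$ with $\varphi=e^{-\psi}+\chi$: the stiff coupling terms then cancel exactly after elementwise integration by parts, leaving only the benign commutator $(u_h,e^{-\psi}\nabla\psi\cdot\bld q_h)_{\Oh}$, and the additive constant $\chi$ (which you omit) is precisely what allows this commutator to be absorbed into $\epsilon^{-1}\|\bld q_h\|^2_{\Oh}$ and the jump term. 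This weighted coercivity is then turned into a genuine discrete inf--sup bound (Lemma~\ref{lemma_practical_infsup}) by projecting the weighted triple back into the discrete spaces and absorbing $\mathcal O(h^{1/2})$ commutators for $h<h_0$; your $\mathcal O(h)$ commutator estimate for $P_W$ is the analogous step, but applied to only one of the three components.

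Second, your ``tailored HDG projection'' with the coupled face condition on $\bld\Pi_V\bld q\cdot\bld n+\tau\,\Pi_W u$ is exactly the device of \cite{CockburnGopalakrishnanSayas09,ChenCockburnHDGI}, and the paper explains why it cannot be used here: its approximation constants contain factors like $1/\tau_K^{\max}$, which blow up for the admissible stabilization functions of this paper ($\tau$ may be as small as $\min(\epsilon/h_K,1)$); cf.\ the remark that $\Upsilon_K^{\max}=\mathcal O(\epsilon^{-1})$ in \cite{ChenCockburnHDGI}. The paper instead uses the decoupled projections \eqref{eq:projI}, which match the normal trace of $\bld q$ on all faces except $F_K^s$ and the trace of $u$ on $F_K^\star$; this arranges that every residual term in the error equation \eqref{error-eq1} carries either an $\epsilon^{1/2}$ factor (the $\bld{\delta q}$ terms) or a $(\tau^w)^{1/2}$ or $h^{1/2}$ factor (the $\delta u$ face terms on $\partial\mathcal T_h^\star$), which is where the stated rates $\epsilon^{1/2}h^{s_{\bld v}+1/2}(\epsilon^{1/2}+h^{1/2})$ and $h^{s_w+1/2}$ actually come from. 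Your final regrouping of the right-hand side is asserted rather than derived, and with your projection it would not yield $\epsilon$-uniform constants.
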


\begin{theorem}
\label{MainTh2}
Let $(\bld q, u)$ be the solution to the boundary--value problem \eqref{cd_first_order}, and let
 $(\bld q_h, u_h,\widehat u_h)$ be the solution to the HDG method 
\eqref{cd_hdg_eqs} where the stabilization function $\tau$ satisfies assumptions \eqref{assumption_tau_general} and \eqref{assump_tau_s}.
Then, there exists $h_{0}$, independent of $\epsilon$, 
such that when $h<h_{0}$, we have
\begin{align*}
%\label{main2}
\|{u-u_h}\|_{\Oh} &\le \; C \epsilon^{1/2}h^{s_{\bld v}+1/2}(\epsilon^{1/2}+h^{1/2})|u |_{H^{s_{\bld v}+2}(\Oh;\mathbb{R}^d)}  + 
 C h^{s_{w}+1}|u |_{H^{s_{w}+1}(\Oh)},
\end{align*}
for all $s_{\bld v}\in [0,k]$ and $s_w \in [0,k]$. 

%Moreover, if $k\ge 1$, $u\in H^{k+1}(\Omega)$ and $\epsilon = \mathcal{O}(h^2)$, we have
%\[
% \|{u-u_h}\|_{\Oh} \le \; 
% C h^{k+1}|u |_{H^{k+1}(\Oh)}.
%\]
\end{theorem}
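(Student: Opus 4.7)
The strategy is to refine the energy-norm estimate of Theorem~\ref{MainTh1} so as to gain an extra half-power of $h$ in the purely convective part of the bound, using the new hypothesis $\tau_K^w \le C h_K$. I would decompose the error as $u - u_h = (u - \Pi_W u) + (\Pi_W u - u_h) =: \eta_u + \delta_u$, where $\Pi_W$ is the (possibly problem-adapted) projection already used for Theorem~\ref{MainTh1}. The first piece satisfies $\|\eta_u\|_\Oh \le C h^{s_w+1} |u|_{H^{s_w+1}}$ by standard approximation, so the task reduces to bounding $\|\delta_u\|_\Oh^2$ by $C\epsilon h^{2 s_{\bld v}+1}(\epsilon+h)|u|_{H^{s_{\bld v}+2}}^2 + C h^{2 s_w+2}|u|_{H^{s_w+1}}^2$.

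Next, I would reuse the error equations satisfied by $(\delta_{\bld q}, \delta_u, \delta_{\widehat u})$ (obtained by subtracting the HDG system evaluated at the projection from \eqref{cd_hdg_eqs}) and test them against the same $\psi$-weighted test functions employed in the proof of Theorem~\ref{MainTh1}, in the spirit of Johnson and of Ayuso--Marini. The resulting inequality controls $\|\delta_u\|_\Oh^2$ together with the remaining energy quantities, against two kinds of right-hand-side terms: a diffusive-data piece that reproduces the first term of the bound unchanged from Theorem~\ref{MainTh1}, and a convective face-sum piece of the schematic form $\sum_{K} \sum_{F \in \dK} \tau(F)\|\eta_u-\eta_{\widehat u}\|_F^2$, which was the origin of the previously suboptimal rate $h^{s_w+1/2}$.

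The key new step is the handling of this face sum under \eqref{assump_tau_s}. For each element $K$, split $\dK = \{F_K^\star\} \cup (\dK\setminus F_K^\star)$. On the faces in $\dK\setminus F_K^\star$, the weight satisfies $\tau(F) \le \tau_K^w \le C h_K$ by \eqref{assump_tau_s}, and the standard trace-plus-approximation estimate $\|\eta_u-\eta_{\widehat u}\|_F^2 \le C h_K^{2 s_w+1}|u|_{H^{s_w+1}(K)}^2$ delivers a contribution of order $h^{2 s_w+2}|u|_{H^{s_w+1}}^2$, which is precisely the sharpened rate. The lone face $F_K^\star$ is absorbed into the left-hand side upwind jump norm $\bigl\||\tau-\tfrac12\bld\beta\cdot\bld n|^{1/2}(\delta_u-\delta_{\widehat u})\bigr\|_{\dOh}^2$ via assumption \eqref{assumption_tau_1}, exactly as in the proof of Theorem~\ref{MainTh1}.

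The delicate point, and main obstacle, is the face-level bookkeeping in the energy identity: after integration by parts, the convective face sum must reduce to an expression carrying only the factor $\tau(F)$ on each face, with no residual $\|\eta_{\bld q}\|_F^2$ contribution left on $\dK\setminus F_K^\star$, since any such residual would reinstate the lost half-power and defeat the improvement. The projection $\Pi_W$ together with its companion $\Pi_{\bld V}$ must therefore be chosen so that their moment conditions on each face are coordinated with the form of $\tau$; once this compatibility is in place, the improvement follows by Young's inequality and absorption, without affecting the diffusive part of the bound.
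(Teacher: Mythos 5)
Your overall strategy is the paper's: decompose the error through the projections, use the error equation together with the weighted-test-function stability bound, and observe that the only estimate that changes under \eqref{assump_tau_s} is the $\tau$-weighted face term, where $\tau_K^w\le Ch_K$ upgrades $(\tau^w)^{1/2}\|\delta u\|_{\partial\mathcal{T}_h^\star}$ from $Ch^{s_w+1/2}$ to $Ch^{s_w+1}$. That single substitution is, in fact, the entirety of the paper's proof of Theorem~\ref{MainTh2} once Theorem~\ref{MainTh1} is in place, and your treatment of the faces in $\dK\setminus F_K^\star$ reproduces it.

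The gap is your handling of the remaining face $F_K^\star$. You propose to absorb its contribution into the left-hand-side jump seminorm via \eqref{assumption_tau_1}, but the term in question, $\langle \tau\,\delta u,\,w-\mu\rangle_{F_K^\star}$, is a cross term between the \emph{approximation} error $\delta u=u-\varPi_h u$ and the test function: Young's inequality absorbs only the test-function factor, and the surviving data quantity $\tau(F_K^\star)\,\|\delta u\|_{F_K^\star}^2$ is of order $h^{2s_w+1}$ whenever $\tau(F_K^\star)=O(1)$ --- which is the generic situation, since $F_K^\star$ is by definition the face where $\tau$ is largest and, e.g., for $\tau_1$ in \eqref{tau1} equals $\sup_F\bld\beta\cdot\bld n$ on outflow faces irrespective of \eqref{assump_tau_s}. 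This reinstates exactly the half power you are trying to remove. The paper does not absorb this term; it annihilates it by building the face moment condition \eqref{eq:projII2} into $\varPi_h$ precisely on $F_K^\star$ (and, symmetrically, \eqref{eq:projI2} into $\pv$ on all faces except $F_K^s$), so that in the error equation of Lemma~\ref{lemma-error-eq1} the $\tau$-weighted term is supported only on $\partial\mathcal{T}_h^\star$, i.e.\ on $\dK\setminus F_K^\star$, where \eqref{assump_tau_s} applies; likewise no $\bld{\delta q}$ trace survives except on $F_K^s$, which answers your worry about residual flux contributions. You correctly identify this ``coordination of moment conditions with $\tau$'' as the crux, but you leave it as an unverified requirement rather than exhibiting the projection, and the explicit fallback you give for $F_K^\star$ does not close the argument.
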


%Let us makes several remarks about these remarkable results.
\begin{remark}
If $\tau$ satisfies assumptions \eqref{assumption_tau_general},  $k\ge 1$, $u\in H^{k+1}(\Omega)$ and $\epsilon \leq \mathcal{O}(h)$, we get 
\[
 \|{u-u_h}\|_{\Oh} \le \; 
 C h^{k+1/2}|u |_{H^{k+1}(\Oh)}
\]
by choosing $s_{\bld v} = k-1, s_w = k$ in Theorem~\ref{MainTh1}. 
If $\epsilon = 0$, our method collapses to the original DG method \cite{ReedHill73}. Since the best $L^2$--error
of the DG method for pure convection problems on general meshes is $\|u-u_h\|_\Oh\le C h^{k+1/2}|u|_{H^{k+1}},$
see \cite{Peterson91}; it is reasonable to expect $\|u-u_h\|_\Oh$ to be of
order $h^{k+1/2}$ when $\epsilon \ll 1$.
\end{remark}

\begin{remark} If $\tau$ satisfies assumptions \eqref{assumption_tau_general} and \eqref{assump_tau_s}, 
$k\ge 1$, $u\in H^{k+1}(\Omega)$ and $\epsilon \leq \mathcal{O}(h^2)$, we have
\[
 \|{u-u_h}\|_{\Oh} \le \; 
 C h^{k+1}|u |_{H^{k+1}(\Oh)},
\]
by choosing $s_{\bld v} = k-1, s_w = k$ in Theorem~\ref{MainTh2}. Note that we can construct $\tau$ 
satisfing \eqref{assumption_tau_general} and 
\eqref{assump_tau_s} provided that the mesh satisfies \eqref{mesh_assumps}. Hence, our result can be considered as a generalization of the results in 
\cite{CockburnDongGuzman2008,CockburnDongGuzmanQian2010} in which the authors obtained optimal $L^2$--convergence 
of the original DG method for convection--reaction equations on special meshes.
\end{remark}
\begin{remark}
 It is shown in Appendix~\ref{appendix-1} that we can recover the MH--DG method \cite{Egger2010} from our formulation by 
suitably choosing the stabilizaiton
function $\tau$ and the approximation spaces $\bld V_h, W_h, M_h$. Hence, our results can be directly applied to the MH--DG method. 
In particular, we gain the 
$L^2$--control of $u_h$ and obtained optimal order of convergence for $\|u-u_h\|_{\Oh}$ for special meshes.

\end{remark}

\section{A characterization of the HDG method}
\label{sec:hybrid}

%As it is typical of hybridized versions of mixed methods \cite{Egger2010}, the
%HDG method has additional unknowns, in this case,
%$\widehat{u}_h$. 
Here, we show how to eliminate, in an elementwise manner,
the unknowns $\bld{q}_h$ and $u_h$ from the equations \eqref{cd_hdg_eqs} and rewrite
the original system solely in terms of the unknown $\widehat{u}_h$, see also \cite{CockburnDongGuzmanMarcoRiccardo09,NguyenPeraireCockburnHDGLCD09}. In this way,
we do not have to deal with the large linear system generated by \eqref{cd_hdg_eqs}, 
but with the inversion of a sparser matrix of
remarkably smaller size.

\subsection{The local problems}
We begin by showing how to express the unknowns
$\bld{q}_h$ and  $u_h$ in terms of the unknown $\widehat{u}_h$.

Given ${\lambda}\in L^2(\Eh)$ and ${f}\in L^2(\Oh)$, 
consider the solution to the set of local problems in
each $K \in \Oh$: find
\[
(\bld q_h,{u}_h)\in \bld V(K)\times W(K),
\]
where $\bld V(K) : = P_k(K;\mathbb{R}^d)$ and  $W(K) := P_k(K),$
such that
\begin{subequations}
\begin{align}
\label{local1}
(\epsilon^{-1}\boldsymbol{q}_{h},\boldsymbol{r})_{K} -(u_{h},\nabla\cdot \boldsymbol{r})_{K}
&=
-\langle \lambda,\boldsymbol{r}\cdot \boldsymbol{n}\rangle_{\dK},\\
\label{local2}
(\nabla\cdot \boldsymbol{q}_{h}, w)_{K} - (u_{h},\nabla\cdot(\boldsymbol{\beta}w))_{K}
+\langle \tau u_h, w\rangle_{\dK}&= \langle (\tau-\bld{\beta}\cdot \bld n) \lambda, w\rangle_{\dK} 
+ (f,w)_{K},
\end{align}
\text{for all $(\boldsymbol{r},w)\in \boldsymbol{V}(K)\times W(K)$.}
\end{subequations}

We denote by $(\bld{q}_h^{f},u_h^{f})$
the solution of the above local problem when we take $\lambda=0$. Similarly,
we denote $(\bld{q}_h^{\lambda},u_h^{\lambda})$ the solution
when $f=0$. We can thus write that
\[
(\bld q_h,u_h)=(\bld{q}_h^{\lambda},
u_h^{\lambda})+ (\bld{q}_h^{f},u_h^{f}).
\]

\subsection{The global problem}
If we now take $\lambda:=\widehat{u}_h$, we see that $(\bld q_h,u_h)$
is expressed in terms of $\widehat{u}_h$ (and $f$). We can thus eliminate those
two unknowns from the equations and solve for $\widehat{u}_h$ only. The global problem that determines $\widehat{u}_h$ is not difficult to find.

We have that $\widehat{u}_h\in M_h(g)$ must satisfy
\[
a_h(\widehat{u}_h,\mu)=b_h(\mu)\qquad\forall\mu\in M_h(0),
\]
where
\begin{alignat*}{1}
a_h(\lambda,\mu)&:=
-\bint{\bld{q}^{\lambda}_h\cdot\bld{n}}{\mu}{\dOh} - \bint{\tau(u_h^{\lambda} - \lambda)}{\mu}{\dOh},
\\
b_h(\mu)&:=
\bint{\bld{q}^f_h\cdot\bld{n}}{\mu}{\dOh} + \bint{\tau u^f_h}{\mu}{\dOh}.
\end{alignat*}
Indeed, note that the definition of $M_h(g)$
incorporates the boundary condition \eqref{cd_hdg_eq3}, and that the last equation
is nothing but a rewriting of the transmission condition \eqref{cd_hdg_eq4} by observing that 
\[\bint{\bld \beta \cdot \bld n\lambda}{\mu}{\dOh} = 0,\quad \forall \lambda\in M_h(g), \forall \mu\in M_h(0).\]

\subsection{A characterization of  the approximate solution}
The above results suggest the following charaterization of the
approximate solution of the HDG method. We leave the proof to the interesed readers as an exercise, see also 
\cite{CockburnDongGuzmanMarcoRiccardo09,NguyenPeraireCockburnHDGLCD09}.

\begin{theorem}\label{character}
The approximate solution of the HDG method
satisfies
\[
(\bld q_h,u_h)=(\bld{q}_h^{\widehat{u}_h},
u_h^{\widehat{u}_h})+ (\bld{q}_h^{f},u_h^{f}).
\]
Moreover, $\widehat{u}_h\in M_h(g)$ is the solution of
\begin{alignat}{2}
\label{hybrid-ee} 
a_h (\widehat{u}_h,\mu)  =&b_h(\mu) \qquad \forall \mu\in M_h(0).
\end{alignat}
Also, we have that
\begin{align*}
a_h(\lambda,\mu) &=\; (\epsilon^{-1}\,\bld{q}^\lambda_h,{\bld q}^\mu_h)_\Oh
-(u_h^\lambda, \nabla\cdot (\bld \beta u^\mu_h))_\Oh +\bint{\bld \beta \cdot \bld n\lambda}{ u^\mu_h}{\dOh} 
+\bint{\tau(u^\lambda_h- \lambda)}{ u^\mu_h- \mu}{\dOh},\\
b_h(\mu) & = \; (f,u_h^\mu)_\Oh + \bint{\bld \beta \cdot \bld n u_h^f}{ \mu}{\dOh} + (u_h^f, \bld\beta \cdot \nabla u_h^\mu)_\Oh
- (\nabla u_h^f, \bld\beta  u_h^\mu)_\Oh.
\end{align*}
\end{theorem}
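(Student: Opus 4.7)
The plan is to argue in three stages: first establish the decomposition $(\bld q_h, u_h)=(\bld q_h^{\widehat u_h}, u_h^{\widehat u_h})+(\bld q_h^f, u_h^f)$ by linearity of the local solvers, then identify the global equation for $\widehat u_h$ as a reformulation of the transmission condition \eqref{cd_hdg_eq4}, and finally derive the alternative (symmetric) expressions for $a_h$ and $b_h$ by cross-testing the local problems against each other.

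For the first two stages: comparing equations \eqref{cd_hdg_eq1}--\eqref{cd_hdg_eq2} with the local problems \eqref{local1}--\eqref{local2}, I note that upon taking $\lambda=\widehat u_h$ in the local system and using the definition \eqref{cd_hdg_eq5} of the numerical flux, the element-wise equations of the HDG method are exactly reproduced. The boundary condition \eqref{cd_hdg_eq3} is built into the trial space $M_h(g)$. It remains to show that \eqref{cd_hdg_eq4} is equivalent to $a_h(\widehat u_h,\mu)=b_h(\mu)$ for $\mu\in M_h(0)$. Substituting \eqref{cd_hdg_eq5} into \eqref{cd_hdg_eq4} gives
\[
\langle \bld q_h\cdot\bld n+\bld\beta\cdot\bld n\,\widehat u_h+\tau(u_h-\widehat u_h),\,\mu\rangle_{\dOh}=0,
\]
and the key observation is that for $\widehat u_h\in M_h(g)$ and $\mu\in M_h(0)$ the term $\langle\bld\beta\cdot\bld n\,\widehat u_h,\mu\rangle_{\dOh}$ vanishes (normals cancel across interior faces where both traces are single-valued, and $\mu=0$ on $\partial\Omega$). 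Splitting $(\bld q_h,u_h)$ through the decomposition, moving the pieces driven by $f$ to the right-hand side yields $a_h(\widehat u_h,\mu)=b_h(\mu)$ in the first form written in the global problem subsection.

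The main technical step is the rewriting into the symmetric form given in the theorem. I would test \eqref{local1} for $(\bld q_h^\lambda,u_h^\lambda)$ against $\bld r=\bld q_h^\mu$ and \eqref{local1} for $(\bld q_h^\mu,u_h^\mu)$ against $\bld r=\bld q_h^\lambda$, and test \eqref{local2} for $(\bld q_h^\lambda,u_h^\lambda)$ against $w=u_h^\mu$ (with $f=0$). From the second of these I obtain
\[
-\langle\bld q_h^\lambda\cdot\bld n,\mu\rangle_{\dOh}=(\epsilon^{-1}\bld q_h^\lambda,\bld q_h^\mu)_{\Oh}-(u_h^\mu,\nabla\cdot\bld q_h^\lambda)_{\Oh},
\]
and substituting $(\nabla\cdot\bld q_h^\lambda,u_h^\mu)_{\Oh}$ from the third identity replaces $(u_h^\mu,\nabla\cdot\bld q_h^\lambda)_{\Oh}$ with boundary and volume terms involving $\bld\beta$, $\tau$, $u_h^\lambda$ and $\lambda$. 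Adding $-\langle\tau(u_h^\lambda-\lambda),\mu\rangle_{\dOh}$ and simplifying, the $\langle\tau\lambda,u_h^\mu\rangle_{\dOh}$ and $\langle\tau u_h^\lambda,\mu\rangle_{\dOh}$ terms regroup into the clean factor $\langle\tau(u_h^\lambda-\lambda),u_h^\mu-\mu\rangle_{\dOh}$, while the $\langle\bld\beta\cdot\bld n\,\lambda,u_h^\mu\rangle_{\dOh}$ term survives, producing the stated expression for $a_h(\lambda,\mu)$. The formula for $b_h(\mu)$ is obtained by the analogous calculation applied to $(\bld q_h^f,u_h^f)$, testing \eqref{local1} with $\bld r=\bld q_h^\mu$, \eqref{local2} with $w=u_h^\mu$, and rewriting $(\nabla\cdot\bld q_h^\mu,u_h^f)_{\Oh}$ via elementwise integration by parts to trade $\nabla\cdot(\bld\beta u_h^\mu)$ for $\bld\beta\cdot\nabla u_h^\mu$.

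The only real obstacle is bookkeeping: keeping track of which face integrals cancel because $\mu$ and $\lambda$ are single-valued on the skeleton versus which do not, and being careful that the $f$-driven local solver produces the four-term expression for $b_h$ rather than the original two-term version. Beyond that, everything reduces to repeated use of the two local equations and their symmetry under exchanging the roles of $\lambda$ and $\mu$.
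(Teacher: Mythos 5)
Your proposal is correct and follows exactly the route the paper intends: the paper leaves the proof as an exercise, but its Sections~3.1--3.2 already set up the local/global decomposition, the identification of \eqref{hybrid-ee} with the transmission condition \eqref{cd_hdg_eq4}, and the key cancellation $\bint{\bld\beta\cdot\bld n\,\widehat u_h}{\mu}{\dOh}=0$, all of which you use. Your cross-testing of the local problems (testing \eqref{local1} for $(\bld q_h^\mu,u_h^\mu)$ against $\bld q_h^\lambda$ and \eqref{local2} for $(\bld q_h^\lambda,u_h^\lambda)$ against $u_h^\mu$, then regrouping the $\tau$ terms into $\bint{\tau(u_h^\lambda-\lambda)}{u_h^\mu-\mu}{\dOh}$) does yield the stated symmetric forms, so no gaps.
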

%{\color{red}
%Let us close this Section by mention that if we scale the global unknowns from $\widehat{u}_h$ to $\widetilde{u}_h: =\tau^{1/2}\widehat{u}_h$, we can actually
%prove that the condition number of  the scaled linear system is $\mathcal{O}(h^2)$, see Appendix \ref{appendix} for details.}

\subsection{The conditioning of the HDG method}
We note that both examples of stabilization function $\tau$ in \eqref{tau1} and \eqref{tau2} on a face $F$ can be very small if 
$\boldsymbol{\beta}\cdot\boldsymbol{n}|_{F}$
and $\epsilon$ are very small. 
In this case, the condition number of the global matrix generated by $a_{h}$ in 
(\ref{hybrid-ee}) might blow up as $\epsilon$ goes to zero.

In order to make the condition number independent of $\epsilon$, we need a new assumption on $\tau$, namely,
\begin{align}
\label{assump_cond}
\inf_{\bld x\in F}\left( \tau - \frac{1}{2}\bld \beta(\bld x)\cdot\bld n\right) \ge &\;C_2 
\min (\dfrac{\epsilon}{h_{F}},1)& \forall 
F\in \dK, \forall K\in \Oh.
\end{align}
If we introduce 
\begin{align}
\label{change_variables}
\tilde{\lambda} =  & \Lambda_{\epsilon}\lambda,\quad \tilde{\mu} = \Lambda_{\epsilon}\mu,\qquad \forall 
\lambda \in M_{h}(g),\mu\in M_{h}(0),\\
\nonumber
&\text{where }\Lambda_{\epsilon}|_{F} = \left(\sup_{x\in F}\vert \boldsymbol{\beta}\cdot\boldsymbol{n}(x)\vert
+\min(\dfrac{\epsilon}{h_{F}},1) \right)^{1/2},\quad \forall F\in\mathcal{E}_{h},
\end{align}
the preferred form for implementation for the HDG method 
is to find $\tilde{\lambda}\in M_{h}({\Lambda_{\epsilon}g})$ satisfying
\begin{align*}
%\label{reduced_system2}
& \tilde{a}_{h}(\tilde{\lambda}, \tilde{\mu}) = b_{h}(\Lambda_{\epsilon}^{-1}\tilde{\mu})
\end{align*}
for all $\tilde{\mu}\in M_{h}(0)$. Here, 
\begin{align}
\label{reduced_matrix2}
\tilde{a}_{h}(\tilde{\lambda},\tilde{\mu}) = a_{h}(\Lambda_{\epsilon}^{-1}\tilde{\lambda}, \Lambda_{\epsilon}^{-1}\tilde{\mu}).
\end{align}

%We have the estimate of the condition number of the global matrix generated 
%by $\tilde{a}_{h}$ in the following Theorem.
We have the following theorem concerning the condition number of the scaled global matrix in \eqref{reduced_matrix2}.
\begin{theorem}
\label{Thm_conditioning}
Let the stabilization function $\tau$
satisfy assumptions \eqref{assumption_tau_general} and \eqref{assump_cond}, and let $\epsilon \leq \mathcal{O}(h)$. 
Let $\kappa$ be the spectral condition number of the global matrix generated by $\tilde{a}_h$ in \eqref{reduced_matrix2}. 
Then there is $h_{0}>0$, 
which is independent of $\epsilon$ and $h$,  such that when $h< h_{0}$,
\begin{align*}
&\kappa \leq Ch^{-2}.
\end{align*}
\end{theorem}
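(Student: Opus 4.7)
The plan is to deduce the $\mathcal{O}(h^{-2})$ spectral condition number bound from two $\epsilon$- and $h$-independent estimates on the scaled skeleton form $\tilde{a}_h$: a coercivity estimate
\[
\tilde{a}_h(\tilde\mu,\tilde\mu) \ge C\,\|\tilde\mu\|_{L^2(\Eh)}^2\qquad\forall\,\tilde\mu\in M_h(0),
\]
and a continuity estimate
\[
|\tilde{a}_h(\tilde\lambda,\tilde\mu)|\le Ch^{-2}\,\|\tilde\lambda\|_{L^2(\Eh)}\|\tilde\mu\|_{L^2(\Eh)}.
\]
Since, for a standard nodal basis of $M_h$, the skeleton mass matrix has eigenvalues of order $h^{d-1}$, these two bounds translate directly to $\lambda_{\min}(\tilde A)\gtrsim h^{d-1}$ and $\lambda_{\max}(\tilde A)\lesssim h^{d-3}$, hence $\kappa\le Ch^{-2}$.

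For the coercivity I would start from the explicit formula for $a_h(\mu,\mu)$ given in Theorem~\ref{character}. Integrating $-(u^\mu_h,\nabla\cdot(\bld\beta u^\mu_h))_{\Oh}$ by parts element by element and completing the square against the stabilization term $\langle\tau(u^\mu_h-\mu),u^\mu_h-\mu\rangle_{\dOh}$ and the boundary flux term $\langle\bld\beta\cdot\bld n\,\mu,u^\mu_h\rangle_{\dOh}$, I expect to arrive at the symmetric identity
\[
a_h(\mu,\mu)=\|\epsilon^{-1/2}\bld q^\mu_h\|^2_{\Oh}+\tfrac12\|(-\divs\bld\beta)^{1/2}u^\mu_h\|^2_{\Oh}+\bigl\langle(\tau-\tfrac12\bld\beta\cdot\bld n)(u^\mu_h-\mu),u^\mu_h-\mu\bigr\rangle_{\dOh}+\tfrac12\langle\bld\beta\cdot\bld n\,\mu,\mu\rangle_{\dOh}.
\]
For $\mu\in M_h(0)$ the last term vanishes (opposite normals on interior faces cancel and $\mu$ vanishes on $\partial\Omega$). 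Assumption \eqref{assump_beta1} makes the second term nonnegative, while assumptions \eqref{assumption_tau_1} and \eqref{assump_cond} combine to give $\tau-\tfrac12\bld\beta\cdot\bld n\ge (C_2/2)\Lambda_\epsilon^2$ pointwise on every face. Consequently
\[
a_h(\mu,\mu)\ge C\bigl(\|\epsilon^{-1/2}\bld q^\mu_h\|^2_{\Oh}+\|\Lambda_\epsilon(u^\mu_h-\mu)\|^2_{\dOh}\bigr).
\]

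It remains to upgrade this jump control to control of $\|\Lambda_\epsilon\mu\|_{\dOh}=\|\tilde\mu\|_{L^2(\Eh)}$. I plan to do so by testing the local problems \eqref{local1}--\eqref{local2} (with $\lambda=\mu$, $f=0$) with $\bld r=\bld q^\mu_h$ and $w=u^\mu_h$, producing an energy identity for $u^\mu_h$. A weighted Cauchy--Schwarz, a discrete trace inequality for $\bld q^\mu_h\cdot\bld n$, and the hypothesis $\epsilon\le \mathcal{O}(h)$ (which absorbs terms of the form $\epsilon h^{-1}\|\mu\|_{\dOh}^2$ into $\|\Lambda_\epsilon\mu\|_{\dOh}^2$) should yield a reverse stability bound $\|\Lambda_\epsilon u^\mu_h\|^2_{\dOh}\le C\|\Lambda_\epsilon\mu\|^2_{\dOh}$; a triangle inequality then closes the coercivity. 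For the continuity I would estimate each of the four summands in the formula for $a_h(\Lambda_\epsilon^{-1}\tilde\lambda,\Lambda_\epsilon^{-1}\tilde\mu)$ from Theorem~\ref{character} via Cauchy--Schwarz and the local stability of $\mu\mapsto(\bld q^\mu_h,u^\mu_h)$ combined with inverse estimates. The worst scaling comes from the convective volume term $(u^\lambda_h,\nabla\cdot(\bld\beta u^\mu_h))_{\Oh}$: an inverse estimate on $\nabla u^\mu_h$ gives one factor $h^{-1}$, and translating an element $L^2$ norm of $u^\mu_h$ back to a face $L^2$ norm (absorbed into $\Lambda_\epsilon^{-1}$ via the scaling) gives another $h^{-1}$, producing the overall $h^{-2}$ factor.

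The main obstacle is the reverse stability bound $\|\Lambda_\epsilon u^\mu_h\|_{\dOh}\lesssim\|\Lambda_\epsilon\mu\|_{\dOh}$. In the purely elliptic case it follows from standard well-posedness of the local Dirichlet problem, but here the weight $\tau-\tfrac12\bld\beta\cdot\bld n$ may collapse to $\mathcal{O}(\min(\epsilon/h,1))$ on some faces, so the argument must simultaneously exploit the $|\bld\beta\cdot\bld n|$ piece (assumption \eqref{assumption_tau_1}) \emph{and} the $\min(\epsilon/h,1)$ piece (assumption \eqref{assump_cond}) of $\Lambda_\epsilon^2$, together with the sign structure from \eqref{assump_beta1}. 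The assumption $\epsilon\le \mathcal{O}(h)$ ensures $\min(\epsilon/h,1)=\epsilon/h$, which is precisely the quantity needed to absorb the $\epsilon$-dependent remainder from the weighted Cauchy--Schwarz applied to $\langle\mu,\bld q^\mu_h\cdot\bld n\rangle_{\dOh}$; the smallness condition $h<h_0$ then enters to absorb lower-order perturbations in this final absorption step.
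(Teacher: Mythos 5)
There is a genuine gap in the coercivity half of your argument. Your symmetric identity for $a_h(\mu,\mu)$ is correct, but look at what it actually contains when $\divs\bld\beta=0$ (which the paper permits, since \eqref{assump_beta1} only requires $-\divs\bld\beta\ge 0$): the term $\tfrac12\|(-\divs\bld\beta)^{1/2}u^\mu_h\|^2_{\Oh}$ vanishes, and the only positive contributions are $\|\epsilon^{-1/2}\bld q^\mu_h\|^2_{\Oh}$ and the weighted \emph{jump} $\langle(\tau-\tfrac12\bld\beta\cdot\bld n)(u^\mu_h-\mu),u^\mu_h-\mu\rangle_{\dOh}$. This is precisely the failure of the standard energy argument that Section~4.1 of the paper is built around: diagonal testing gives no control of $\|u^\mu_h\|_{\Oh}$, hence none of $\|u^\mu_h\|_{\dOh}$, hence none of $\|\Lambda_\epsilon\mu\|_{\dOh}$. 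Your proposed repair cannot close this: the triangle inequality reads $\|\Lambda_\epsilon\mu\|_{\dOh}\le\|\Lambda_\epsilon(u^\mu_h-\mu)\|_{\dOh}+\|\Lambda_\epsilon u^\mu_h\|_{\dOh}$, and inserting the reverse stability bound $\|\Lambda_\epsilon u^\mu_h\|_{\dOh}\le C\|\Lambda_\epsilon\mu\|_{\dOh}$ puts $C\|\Lambda_\epsilon\mu\|_{\dOh}$ back on the right with a constant that is not less than one, so nothing can be absorbed. The diagonal coercivity $\tilde a_h(\tilde\mu,\tilde\mu)\ge C\|\tilde\mu\|^2_{L^2(\Eh)}$ is therefore not established by your argument, and in the divergence--free, convection--dominated regime there is no reason to expect it.

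The paper's proof (Appendix~B) avoids this by proving an inf--sup--type lower bound with a \emph{different} test function: Lemma~\ref{low_bound_reduced_system} shows $b_h((\bld q^\lambda_h,u^\lambda_h,\lambda),(\bld q^{(P_{0,M}\varphi)\lambda}_h,u^{(P_{0,M}\varphi)\lambda}_h,(P_{0,M}\varphi)\lambda))$ controls $\|u^\lambda_h\|^2_{\Oh}$ in addition to the flux and jump terms, where $\varphi=e^{-\psi}+\chi$ is the Johnson--Pitk\"aranta weight from \eqref{weight_function} and $P_{0,M}$ its facewise constant projection. With $\|u^\lambda_h\|^2_{\Oh}$ in hand, a discrete trace inequality gives $h\|u^\lambda_h\|^2_{\dOh}\lesssim\|u^\lambda_h\|^2_{\Oh}$, and only then does the triangle inequality run in the useful direction, yielding $\tilde a_h(\tilde\lambda,(P_{0,M}\varphi)\tilde\lambda)\ge Ch\||\tau-\tfrac12\bld\beta\cdot\bld n|^{1/2}\Lambda_\epsilon^{-1}\tilde\lambda\|^2_{\dOh}\ge C\|\tilde\lambda\|_h^2$ via \eqref{assump_cond} and \eqref{assumption_tau_1}. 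Note also Remark~B.4: this weighted lower bound cannot be read off from the global stability Lemma~\ref{lemma_practical_infsup}, because the projected weighted triple is not of the form $(\bld q^m_h,u^m_h,m)$; the resulting commutator terms involving $\nabla\cdot\bld q^\lambda_h$ are absorbed using an inverse inequality together with the hypothesis $\epsilon\le\mathcal{O}(h)$ and $h<h_0$. Your continuity estimate and the reduction of the two bilinear-form bounds to a condition-number bound via the mass-matrix scaling are consistent with the paper's \eqref{upp_bound1} and final step, but the lower bound is the heart of the theorem and your route to it does not work.
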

We present a detailed proof of Theorem~\ref{Thm_conditioning} in Appendix~\ref{appendix}.

\begin{remark}
Obviously, assumption (\ref{assump_cond}) is satisfied by the second 
stabilization function (\ref{tau2}) but not by the first one (\ref{tau1}) on meshes that aligned with the direction of $\bld \beta$. 
Theorem~\ref{Thm_conditioning} shows that the condition number of the global matrix of the HDG method 
for convection--dominated diffusion problems 
is the same as that of HDG methods for elliptic problems in \cite{CockburnDuboisGopalakrishnanTan2013}.
\end{remark}

\section{Convergence analysis}
\label{analysis}
In this section, we  prove Theorem~\ref{MainTh1} and Theorem~\ref{MainTh2}.
We begin by introducing the following bilinear form:
\begin{align}
\label{bilinear_form}
 B((\boldsymbol{q},u,\lambda),(\boldsymbol{r},w,\mu))%\\
= & \;(\epsilon^{-1}\boldsymbol{q},\boldsymbol{r})_{\mathcal{T}_{h}}-(u,\nabla\cdot\boldsymbol{r})_{\mathcal{T}_{h}}
+\langle \lambda, \boldsymbol{r}\cdot \boldsymbol{n}\rangle_{\partial\mathcal{T}_{h}}\\
\nonumber
& -(\boldsymbol{q}+\boldsymbol{\beta}u,\nabla w)_{\mathcal{T}_{h}}
+\langle (\boldsymbol{q}+\boldsymbol{\beta}\lambda)\cdot\boldsymbol{n}
+\tau (u-\lambda),w\rangle_{\partial\mathcal{T}_{h}}\\
\nonumber
& -((\nabla\cdot\boldsymbol{\beta})u,w)_{\mathcal{T}_{h}}
-\langle(\boldsymbol{q}+\boldsymbol{\beta}\lambda)\cdot\boldsymbol{n}
+\tau (u-\lambda), \mu
\rangle_{\partial\mathcal{T}_{h}},
\end{align}
for all 
$(\boldsymbol{q},u,\lambda) \text{ and } (\boldsymbol{r},w,\mu)\in H^{1}(\mathcal{T}_{h};\mathbb{R}^{d})\times H^{1}(\mathcal{T}_{h})\times L^{2}(\mathcal{E}_{h})$.
It's easy to see that the HDG method \eqref{cd_hdg_eqs} can be recasted in the following compact form:
Find $(\boldsymbol{q}_h, u_h, \widehat{u}_h)\in \boldsymbol{V}_{h}\times W_{h}\times M_{h}(g)$ so that 
\begin{align}
\label{compact}
  B((\boldsymbol{q}_h,u_h,\widehat{u}_h),(\boldsymbol{r},w,\mu)) & = (f, w)_{\mathcal{T}_{h}},
\end{align}
for all $(\boldsymbol{r},w,\mu)\in \boldsymbol{V}_{h}\times W_{h}\times M_{h}(0)$.
%We note that, by taking $\boldsymbol{r}_{h}=\boldsymbol{q}_{h},w_{h}=u_{h},\widehat{\mu}_{h}=\widehat{u}_{h}$, thanks to the definition of numerical flux \eqref{cd_hdg_eq5}, the bilinear form is nothing but the sum of the equations \eqref{cd_hdg_eq1} - \eqref{cd_hdg_eq4}  defining the HDG method.
%\textcolor{red}{For our analysis, we denote $\Vert \cdot \Vert_{\mathcal{T}_{h}}$ and $\Vert \cdot\Vert_{\partial\mathcal{T}_{h}}$ by 
%the $L^{2}(\mathcal{T}_{h})$ norm and $L^{2}(\partial\mathcal{T}_{h})$ norm, respectively.}

\subsection{Stability property for the HDG method}
It is well known that we have the following result regarding the stability of the convection--dominated diffusion problem,\begin{align}\label{stablity}
\epsilon \| \nabla u \|^2_{L^{2}(\Omega)} +   \| u \|^2_{L^{2}(\Omega)} \leq C  \| f \|^2_{L^{2}(\Omega)},
\end{align}
provided that $\boldsymbol{\beta}$ satisfies assumption \eqref{beta_assumps} and $g = 0$ on $\partial \Omega$, see \cite{AyusoMarini:cdf}. 
On the other hand, by taking $(\boldsymbol{r},w,\mu) = (\boldsymbol{q}_h,u_h,\widehat{u}_h)$ in \eqref{compact}, the standard energy argument 
only gives the following estimate:
\begin{align*}
 (\epsilon^{-1}\boldsymbol{q}_h,\boldsymbol{q}_h)_{\mathcal{T}_{h}}
 +\langle (\tau-\frac{1}{2} \boldsymbol{\beta}\cdot \boldsymbol{n})(u_h-\widehat{u}_h),u_h-\widehat{u}_h\rangle -\frac{1}{2}((\nabla\cdot\boldsymbol{\beta})u_h,u_h)_{\mathcal{T}_{h}}
= (f, u_h)_{\mathcal{T}_h}.
\end{align*}
Hence, we do not have control of the $L^2$--norm of $u_h$ by the standard energy argument when the velocity field $\boldsymbol\beta$ is divergence-free.
The main idea of our stability analysis is to achieve the control of the $L^2$--norm of $u_h$ 
by mimicking the proof of the stability property \eqref{stablity} at the discrete level. We shall proceed in the following three steps.

%We will show stability in the following norm
%\begin{align}
% \label{norm}
%\|| (\boldsymbol{r}, w, \mu)|\|_{e} := \left(\epsilon^{-1}\|\boldsymbol{r}\|^2_{\mathcal{T}_h} + \|w\|^2_{\mathcal{T}_h} + 
%\| |\tau-\frac{1}{2} \boldsymbol{\beta}\cdot \boldsymbol{n}|^{1/2}(w-\mu)\|_{\partial\mathcal{T}_h} \right)^{1/2}.
%\end{align}

{\bf Step One.}
In view of assumption \eqref{assump_beta0}, we define a function
\begin{equation}
\label{weight_function}
\varphi := e^{-\psi}+\chi,
\end{equation}
where $\chi$ is a positive constant to be determined later.
Mimicking the proof of stability results carried out for the continuous problem, we obtain the following lemma.
\begin{lemma}
\label{lemma_ideal_infsup}
Let $\varphi$ be given in \eqref{weight_function} where $\chi\geq 1+2b_{0}^{-1}\Vert e^{-\psi}\Vert_{L^{\infty}(\Omega)}
\cdot \Vert \nabla\psi\Vert_{L^{\infty}(\Omega)}^{2}$. Also, let $\tau$ satisfy assumption \eqref{assump_tau_00}.
Then for all 
$(\boldsymbol{q}_{h},u_{h},{\lambda}_{h})\in \boldsymbol{V}_{h}\times W_{h}\times M_{h}(0)$, the following inequality holds
\begin{align*}
%\label{ideal_infsup}
%B((\boldsymbol{q}_{h},u_{h},{\lambda}_{h}),(\boldsymbol{q},u,\lambda)) \geq & C\left[\epsilon^{-1}\Vert \boldsymbol{q}_{h}\Vert_{\mathcal{T}_{h}}^{2}+\Vert u_{h}\Vert_{\mathcal{T}_{h}}^{2}
%+ \chi\Vert \vert \tau-\frac{1}{2}\boldsymbol{\beta}\cdot\boldsymbol{n}\vert^{1/2}
%(w_{h}-{\lambda}_{h})\Vert_{\partial \mathcal{T}_{h}}^{2}\right]
B((\boldsymbol{q}_{h},u_{h},{\lambda}_{h}),(\boldsymbol{q}_{\varphi},u_\varphi,\lambda_\varphi)) \geq & \; C \vertiii{ (\boldsymbol{q}_h, u_h, \lambda_h)}_{e}^2, 
\end{align*}
where 
$\boldsymbol{q}_\varphi=\varphi \boldsymbol{q}_{h}$, $u_\varphi = \varphi u_{h}$ and
$\lambda_\varphi = \varphi {\lambda}_{h}  $.
%\begin{align*}
%\mu = 
%\begin{cases} 
%-\varphi \left[ (\boldsymbol{r}_{h}+\boldsymbol{\beta}\widehat{\mu}_{h})\cdot\boldsymbol{n}
% + \tau (w_{h}-\widehat{\mu}_{h})\right] 
%& \text{on $\partial\Omega$,}
%\\
%-\varphi \widehat{\mu}_{h} 
%&\text{on $\partial\mathcal{T}_{h}\backslash \partial\Omega$.}
%\end{cases}
%\end{align*}
\end{lemma}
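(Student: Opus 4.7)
The plan is to substitute the test triple $(\boldsymbol{r}, w, \mu) = (\varphi\boldsymbol{q}_h, \varphi u_h, \varphi \lambda_h)$ into the definition \eqref{bilinear_form} of $B$ and to reorganise the result into three non-negative pieces matching the three terms of $\vertiii{\cdot}_e^2$, plus a single sign-indefinite cross term that will be absorbed by a weighted Young inequality tuned to the choice of $\varphi$ and $\chi$.

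First I would handle the diffusive block. Expanding $-(u_h, \nabla\cdot(\varphi\boldsymbol{q}_h))_{\mathcal{T}_h} = -(u_h, \varphi\nabla\cdot\boldsymbol{q}_h)_{\mathcal{T}_h} - (u_h, \boldsymbol{q}_h\cdot\nabla\varphi)_{\mathcal{T}_h}$ and applying elementwise integration by parts to $-(\boldsymbol{q}_h, \nabla(\varphi u_h))_{\mathcal{T}_h} + \langle\boldsymbol{q}_h\cdot\boldsymbol{n}, \varphi u_h\rangle_{\partial\mathcal{T}_h}$ to recover $(\varphi u_h, \nabla\cdot\boldsymbol{q}_h)_{\mathcal{T}_h}$, the $\nabla\cdot\boldsymbol{q}_h$ volume contributions cancel and the interface terms $\langle\lambda_h, \varphi\boldsymbol{q}_h\cdot\boldsymbol{n}\rangle_{\partial\mathcal{T}_h} - \langle\boldsymbol{q}_h\cdot\boldsymbol{n}, \varphi\lambda_h\rangle_{\partial\mathcal{T}_h}$ cancel identically, leaving only $(\varphi\epsilon^{-1}|\boldsymbol{q}_h|^2, 1)_{\mathcal{T}_h} - (u_h, \boldsymbol{q}_h\cdot\nabla\varphi)_{\mathcal{T}_h}$. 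For the convective block, I would rewrite $-(\boldsymbol{\beta} u_h, \nabla(\varphi u_h))_{\mathcal{T}_h}$ as $-(\boldsymbol{\beta}\cdot\nabla\varphi, u_h^2)_{\mathcal{T}_h} - \tfrac{1}{2}(\boldsymbol{\beta}\varphi, \nabla u_h^2)_{\mathcal{T}_h}$, integrate by parts the second piece, and combine with $-((\nabla\cdot\boldsymbol{\beta})u_h, \varphi u_h)_{\mathcal{T}_h}$ and the two boundary terms. The algebraic identity $u_h^2 - 2\lambda_h u_h + 2\lambda_h^2 = (u_h - \lambda_h)^2 + \lambda_h^2$ collapses the boundary contribution to
\begin{align*}
\langle \varphi(\tau - \tfrac{1}{2}\boldsymbol{\beta}\cdot\boldsymbol{n})(u_h-\lambda_h)^2, 1\rangle_{\partial\mathcal{T}_h} \;-\; \tfrac{1}{2}\langle \varphi\,\boldsymbol{\beta}\cdot\boldsymbol{n}\,\lambda_h^2, 1\rangle_{\partial\mathcal{T}_h},
\end{align*}
and the last quantity vanishes because $\varphi$ is continuous while $\lambda_h\in M_h(0)$ is single-valued on interior faces (the opposite normals cancel) and zero on $\partial\Omega$. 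The remaining volume terms $-\tfrac{1}{2}(\boldsymbol{\beta}\cdot\nabla\varphi, u_h^2)_{\mathcal{T}_h} - \tfrac{1}{2}(\nabla\cdot\boldsymbol{\beta}, \varphi u_h^2)_{\mathcal{T}_h}$ are non-negative thanks to \eqref{assump_beta1}, and in fact bounded below by $\tfrac{b_0}{2}(u_h^2, e^{-\psi})_{\mathcal{T}_h}$ since $-\boldsymbol{\beta}\cdot\nabla\varphi = (\boldsymbol{\beta}\cdot\nabla\psi)e^{-\psi}\geq b_0 e^{-\psi}$ by \eqref{beta_assumps}.

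At this stage the only sign-indefinite term is $-(u_h, \boldsymbol{q}_h\cdot\nabla\varphi)_{\mathcal{T}_h}$. Using $|\nabla\varphi| = |\nabla\psi|\,e^{-\psi}$ and a weighted Young inequality, I would dominate it by $\tfrac{b_0}{4}(u_h^2, e^{-\psi})_{\mathcal{T}_h} + b_0^{-1}\|\nabla\psi\|_\infty^2(|\boldsymbol{q}_h|^2, e^{-\psi})_{\mathcal{T}_h}$; the first half is absorbed by the $\tfrac{b_0}{2}(u_h^2, e^{-\psi})_{\mathcal{T}_h}$ gained above, leaving a coercive $\|u_h\|_{\mathcal{T}_h}^2$ contribution since $e^{-\psi}$ is bounded below. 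The second half is bounded by $b_0^{-1}\|\nabla\psi\|_\infty^2\|e^{-\psi}\|_\infty\|\boldsymbol{q}_h\|_{\mathcal{T}_h}^2$, and the precise hypothesis $\chi\geq 1 + 2b_0^{-1}\|e^{-\psi}\|_\infty\|\nabla\psi\|_\infty^2$ together with $\epsilon\leq 1$ (harmless in the convection-dominated regime) is exactly what is needed for this remainder to be swallowed by $\chi\epsilon^{-1}\|\boldsymbol{q}_h\|_{\mathcal{T}_h}^2\leq (\varphi\epsilon^{-1}|\boldsymbol{q}_h|^2, 1)_{\mathcal{T}_h}$, leaving at least $\epsilon^{-1}\|\boldsymbol{q}_h\|_{\mathcal{T}_h}^2$. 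Finally, \eqref{assump_tau_00} allows us to replace $\tau - \tfrac{1}{2}\boldsymbol{\beta}\cdot\boldsymbol{n}$ by $|\tau - \tfrac{1}{2}\boldsymbol{\beta}\cdot\boldsymbol{n}|$, and $\varphi\geq \chi\geq 1$ delivers the desired lower bound $B\geq C\vertiii{(\boldsymbol{q}_h, u_h, \lambda_h)}_e^2$.

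The main obstacle is purely bookkeeping: keeping the numerous convective boundary contributions organised carefully enough to see the $(u_h-\lambda_h)^2$ structure emerge and to recognise that the residual term $\langle\varphi\boldsymbol{\beta}\cdot\boldsymbol{n}\,\lambda_h^2, 1\rangle_{\partial\mathcal{T}_h}$ cancels globally by single-valuedness of $\lambda_h$ on $\mathcal{E}_h^o$ and by the boundary condition on $\lambda_h$. Once this is accomplished, the weighted Young step and the absorption by $\chi$ are essentially dictated by the design of $\varphi = e^{-\psi}+\chi$ and the stated lower bound on $\chi$; no regularity or discrete inverse inequalities enter the argument.
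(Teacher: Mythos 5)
Your proposal is correct and follows essentially the same route as the paper's proof: the same substitution of $(\varphi\boldsymbol{q}_h,\varphi u_h,\varphi\lambda_h)$, the same integration-by-parts cancellations leaving only the cross term $(u_h,e^{-\psi}\nabla\psi\cdot\boldsymbol{q}_h)_{\mathcal{T}_h}$, the same completion of the square on the boundary using the single-valuedness of $\lambda_h$, and the same weighted Young inequality with $\delta=b_0/2$ absorbed by the stated lower bound on $\chi$ (with $\epsilon\le 1$, which the paper uses implicitly). The only discrepancy is a sign on the residual $\tfrac{1}{2}\langle\varphi\,\boldsymbol{\beta}\cdot\boldsymbol{n}\,\lambda_h^2,1\rangle_{\partial\mathcal{T}_h}$ term, which is immaterial since that term vanishes identically.
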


\begin{proof}
With $(\boldsymbol{q}_\varphi,u_\varphi,\lambda_\varphi)$ given above, we have that
\begin{align*}
B((\boldsymbol{q}_{h},u_{h},{\lambda}_{h}),(\boldsymbol{q}_\varphi,u_\varphi,\lambda_\varphi))
=  (\epsilon^{-1}\boldsymbol{q}_{h},\varphi\boldsymbol{q}_{h})_{\mathcal{T}_{h}} + T_1 + T_2 + T_3 ,
\end{align*}
where
\begin{align*}
T_1  = &\; -(u_{h},\nabla\cdot (\varphi\boldsymbol{q}_{h}))_{\mathcal{T}_{h}}
+\langle {\lambda}_{h}, \varphi\boldsymbol{q}_{h}\cdot \boldsymbol{n}\rangle_{\partial\mathcal{T}_{h}}
 -(\boldsymbol{q}_{h},\nabla (\varphi u_{h}))_{\mathcal{T}_{h}} + \langle \boldsymbol{q}_{h}\cdot\boldsymbol{n},\varphi( u_{h} - {\lambda}_{h}) \rangle_{\partial\mathcal{T}_{h}} \\
T_2  = &\; -(\boldsymbol{\beta}u_{h},\nabla(\varphi u_{h}))_{\mathcal{T}_{h}} 
- 
((\nabla\cdot\boldsymbol{\beta})u_{h},\varphi u_{h})_{\mathcal{T}_{h}}
+\langle \boldsymbol{\beta}\cdot\boldsymbol{n} {\lambda}_{h},\varphi u_{h} \rangle_{\partial\mathcal{T}_{h}}
\\
T_3  =  &\; \langle
\tau (u_{h}-{\lambda}_{h}),\varphi( u_{h} - {\lambda}_{h}) \rangle_{\partial\mathcal{T}_{h}} .
\end{align*}

By integration by parts, we obtain
\begin{align*}
T_1  = &\; -(u_{h},\nabla\cdot (\varphi\boldsymbol{q}_{h}))_{\mathcal{T}_{h}}
+\langle {\lambda}_{h}, \varphi\boldsymbol{q}_{h}\cdot \boldsymbol{n}\rangle_{\partial\mathcal{T}_{h}}
 -(\boldsymbol{q}_{h},\nabla (\varphi u_{h}))_{\mathcal{T}_{h}} + \langle \boldsymbol{q}_{h}\cdot\boldsymbol{n},\varphi( u_{h} - {\lambda}_{h}) \rangle_{\partial\mathcal{T}_{h}} \\
= & \; -(u_{h},\nabla\varphi \cdot\boldsymbol{q}_{h})_{\mathcal{T}_{h}} - (\varphi u_{h},\nabla\boldsymbol{q}_{h})_{\mathcal{T}_{h}}
 -(\boldsymbol{q}_{h},\nabla (\varphi u_{h}))_{\mathcal{T}_{h}} + \langle \boldsymbol{q}_{h}\cdot\boldsymbol{n},\varphi u_{h} \rangle_{\partial\mathcal{T}_{h}} \\
= & \; -(u_{h},\nabla\varphi \cdot\boldsymbol{q}_{h})_{\mathcal{T}_{h}} \\
=&\;  (u_{h},e^{-\psi}\nabla\psi \cdot\boldsymbol{q}_{h})_{\mathcal{T}_{h}}
\\
T_2  = &\; -(\boldsymbol{\beta}u_{h},\nabla(\varphi u_{h}))_{\mathcal{T}_{h}} 
- 
((\nabla\cdot\boldsymbol{\beta})u_{h},\varphi u_{h})_{\mathcal{T}_{h}}
+\langle \boldsymbol{\beta}\cdot\boldsymbol{n} {\lambda}_{h},\varphi u_{h} \rangle_{\partial\mathcal{T}_{h}}\\
=&\; -(\boldsymbol{\beta}\cdot\nabla\varphi, u_h^2 )_{\mathcal{T}_{h}} -(\boldsymbol{\beta} \varphi, \nabla\frac{u_h^2}{2} )_{\mathcal{T}_{h}}
-
((\nabla\cdot\boldsymbol{\beta})\varphi ,u_{h}^2)_{\mathcal{T}_{h}} + \langle \boldsymbol{\beta}\cdot\boldsymbol{n} {\lambda}_{h},\varphi u_{h} \rangle_{\partial\mathcal{T}_{h}}\\
=&\; -\frac{1}{2}(\boldsymbol{\beta}\cdot\nabla\varphi, u_h^2 )_{\mathcal{T}_{h}} - \frac{1}{2}\langle \boldsymbol{\beta}\cdot\boldsymbol{n} {u}_{h},\varphi u_{h} \rangle_{\partial\mathcal{T}_{h}}-
\frac{1}{2}((\nabla\cdot\boldsymbol{\beta})\varphi ,u_{h}^2)_{\mathcal{T}_{h}} + \langle \boldsymbol{\beta}\cdot\boldsymbol{n} {\lambda}_{h},\varphi u_{h} \rangle_{\partial\mathcal{T}_{h}}\\
=&\; \frac{1}{2}(\boldsymbol{\beta}\cdot\nabla\psi, e^{-\psi}u_h^2 )_{\mathcal{T}_{h}} -\frac{1}{2}((\nabla\cdot\boldsymbol{\beta})\varphi ,u_{h}^2)_{\mathcal{T}_{h}} 
- \frac{1}{2}\langle \boldsymbol{\beta}\cdot\boldsymbol{n} (u_h-{\lambda}_{h}),\varphi (u_{h}-\lambda_h) \rangle_{\partial\mathcal{T}_{h}},
\end{align*}
where in the last step, we used $\langle \boldsymbol{\beta}\cdot\boldsymbol{n} {\lambda}_{h},\varphi \lambda_{h} \rangle_{\partial\mathcal{T}_{h}} = 0 $ due to the 
fact that $\lambda_{h}$ is single valued on the interior faces and $\lambda_{h}=0$ on $\partial\Omega$.

Combining $T_1$, $T_2$ and $T_3$, we have that
\begin{align*}
B((\boldsymbol{q}_{h},u_{h},{\lambda}_{h}),(\boldsymbol{q}_\varphi,u_\varphi,\lambda_\varphi))
= & \;(\epsilon^{-1}\boldsymbol{q}_{h},\varphi\boldsymbol{q}_{h})_{\mathcal{T}_{h}}
+(u_{h},e^{-\psi}\nabla\psi\cdot\boldsymbol{q}_{h})_{\mathcal{T}_{h}}\\
&+\dfrac{1}{2}([\boldsymbol{\beta}\cdot \nabla\psi]u_{h},e^{-\psi}u_{h})_{\mathcal{T}_{h}}
-\dfrac{1}{2}((\nabla\cdot\boldsymbol{\beta})u_{h},\varphi u_{h})_{\mathcal{T}_{h}}\\
&+\langle(\tau-\dfrac{1}{2}\boldsymbol{\beta}\cdot\boldsymbol{n})\varphi(u_{h}-\lambda_{h}),
u_{h}-\lambda_{h}\rangle_{\partial\mathcal{T}_{h}}
\end{align*}
Invoking assumptions \eqref{beta_assumps} and \eqref{assump_beta1}, and $\varphi\geq \chi$, we obtain
\begin{align*}
B((\boldsymbol{q}_{h},u_{h},\lambda_{h}),(\boldsymbol{q}_\varphi,u_\varphi,\lambda_\varphi))
\geq & (\epsilon^{-1}\boldsymbol{q}_{h},\varphi\boldsymbol{q}_{h})_{\mathcal{T}_{h}}
+(u_{h},e^{-\psi}\nabla\psi\cdot\boldsymbol{q}_{h})_{\mathcal{T}_{h}}
+\dfrac{1}{2}b_{0}(u_{h},e^{-\psi}u_h)_{\mathcal{T}_{h}}\\
&+\langle  (\tau-\frac{1}{2}\boldsymbol{\beta}\cdot\boldsymbol{n})\varphi(u_{h}-\lambda_{h}),
u_{h}-\lambda_{h}\rangle_{\partial\mathcal{T}_{h}}\\
\geq & \chi(\epsilon^{-1}\boldsymbol{q}_{h},\boldsymbol{q}_{h})_{\mathcal{T}_{h}}
+(u_{h},e^{-\psi}\nabla\psi\cdot\boldsymbol{q}_{h})_{\mathcal{T}_{h}}
+\dfrac{1}{2}b_{0}(u_{h},e^{-\psi}u_{h})_{\mathcal{T}_{h}}\\
&+\chi \langle  (\tau-\frac{1}{2}\boldsymbol{\beta}\cdot\boldsymbol{n})(u_{h}-\lambda_{h}),
u_{h}-\lambda_{h}\rangle_{\partial\mathcal{T}_{h}}.
\end{align*}
Using the Cauchy--Schwartz inequality, we have
\begin{equation*}
(u_{h},e^{-\psi}\nabla\psi\cdot\boldsymbol{q}_{h})_{\mathcal{T}_{h}} 
\leq \dfrac{1}{2}\left[ 
\delta^{-1}\Vert \nabla\psi\Vert_{L^{\infty}(\Omega)}^{2}(e^{-\psi}\boldsymbol{q}_{h},
\boldsymbol{q}_{h})_{\mathcal{T}_{h}}
+\delta(e^{-\psi}u_{h},u_{h})_{\mathcal{T}_{h}}^{2}
\right]
\end{equation*}
for any $\delta>0$.
Taking $\chi \geq 1+2b_{0}^{-1}\Vert e^{-\psi}\Vert_{L^{\infty}(\Omega)}\cdot \Vert \nabla\psi\Vert_{L^{\infty}(\Omega)}^{2}$ 
and $\delta = b_{0}/2$,
we get
\begin{align*}
B((\boldsymbol{q}_{h},u_{h},\lambda_{h}),(\boldsymbol{q}_\varphi,u_\varphi,\lambda_\varphi))
\geq & \epsilon^{-1}\dfrac{\chi}{2}(\boldsymbol{q}_{h},\boldsymbol{q}_{h})_{\mathcal{T}_{h}}
+\dfrac{b_{0}}{4}(e^{-\psi}u_{h},u_{h})_{\mathcal{T}_{h}}\\
& +\chi\Vert \vert \tau-\frac{1}{2}\boldsymbol{\beta}\cdot\boldsymbol{n}\vert^{1/2} (u_{h}-\lambda_{h})
\Vert_{\partial\mathcal{T}_{h}}^{2}.
\end{align*}
To complete the proof, we simply absorb $\chi$, $e^{-\psi}$ and $b_0$ into the generic constant $C$.
\end{proof}

{\bf Step Two.} 
%\red{
We note that the test function $(\boldsymbol{q}_\varphi,u_\varphi,\lambda_\varphi)=(\varphi \boldsymbol{q}_{h}, \varphi u_{h}, \varphi {\lambda}_{h})  $
 in Lemma \ref{lemma_ideal_infsup} is not in the discrete space 
$ 
\boldsymbol{V}_h \times W_h \times M_h(0). 
$
To establish a discrete stability property, we shall consider taking the discrete
 test functions as a projection of $(\boldsymbol{q}_\varphi,u_\varphi,\lambda_\varphi)$ 
onto the spaces $ \boldsymbol{V}_h \times W_h \times M_h$, denoted by $\pv \bld q_\varphi, \varPi_h u_\varphi, 
P_M\lambda_\varphi$. 
Here  $P_M$ is the $L^2$--projection  onto $M_h$. And $\pv$ and $\varPi_h$ are the projections from $H^1(\Oh;\mathbb{R}^d)$ and 
$H^1(\Oh)$ onto 
$\bld V_h$ and $W_h$ respectively satisfying
\begin{subequations}
  \label{eq:projI}
  \begin{align}
    \label{eq:projI1}
    (\pv \bld{q}, \bld{v} )_K
    &= ( \bld{q}, \bld{v} )_K
    && \forall\; \bld{v} \in \bpol{k-1}{K},
    \\
    \label{eq:projI2}
    \bint{\pv \bld q\cdot \bld n}{\mu}{F} &= \bint{\bld q\cdot \bld n}{\mu}{F}
    && \forall \; \mu\in \pol{k}{F},\;\;\forall \; F\in\dK\backslash F_K^s,\\
    \label{eq:projII1}
    (\varPi_h u, w )_K
    &= ( u, w )_K
    && \forall\; w \in \pol{k-1}{K},
    \\
    \label{eq:projII2}
    \bint{\varPi_h u}{\mu}{F_K^\star} &= \bint{u}{\mu}{F_K^\star},&& \forall \; \mu\in \pol{k}{F_K^\star}.
  \end{align}
\end{subequations}
where $F_K^s$ and $F_K^\star$ are defined in \eqref{notation_F}. We have the following optimal approximation property for $\pv$ and $\varPi_h$, whose proof was
available in \cite[Proposition 2.1]{CockburnDongGuzman2008}.
\begin{lemma}
 \label{approx-q}
Assume that $\bld q \in H^{s+1}(K; \mathbb{R}^d)$ for $s\in [0,k]$ on an element $K\in \Oh$. Then
\[
 \|\pv \bld q-\bld q\|_{K} \le C\,h^{s+1}|\bld q|_{H^{s+1}(K;\mathbb{R}^d)}.
\]
Assume that $u \in H^{s+1}(K)$ for $s\in [0,k]$ on an element $K\in \Oh$. Then
\[
 \|\varPi_h u-u\|_{K} \le C\,h^{s+1}|u|_{H^{s+1}(K)}.
\]
\end{lemma}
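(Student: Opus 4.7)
The plan is to prove both bounds by the standard combination of unisolvence, polynomial preservation, and a Bramble--Hilbert argument on a reference simplex, followed by scaling to the physical element~$K$.

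First I would verify that the two sets of degrees of freedom in \eqref{eq:projI} uniquely determine a polynomial, so that $\pv$ and $\varPi_h$ are well--defined linear operators onto $\bpol{k}{K}$ and $\pol{k}{K}$, respectively. For $\varPi_h$ this is straightforward by a dimension count: the moments in \eqref{eq:projII1}--\eqref{eq:projII2} furnish $\dim\pol{k-1}{K}+\dim\pol{k}{F_K^\star}=\dim\pol{k}{K}$ linear conditions, and unisolvence follows because any $w\in\pol{k}{K}$ with $(w,v)_K=0$ for all $v\in\pol{k-1}{K}$ and $w=0$ on $F_K^\star$ must vanish (write $w=\ell_{F_K^\star}\,\tilde w$ with $\ell_{F_K^\star}$ the affine form vanishing on $F_K^\star$, and test against $\tilde w\in\pol{k-1}{K}$). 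For $\pv$, the counting matches $\dim\bpol{k}{K}$, and unisolvence is the main obstacle: given $\bld v\in\bpol{k}{K}$ with all moments in \eqref{eq:projI1}--\eqref{eq:projI2} equal to zero, I would integrate by parts against arbitrary $w\in\pol{k}{K}$ with $w|_{F_K^s}=0$ to conclude $\nabla\cdot\bld v=0$ and $\bld v\cdot\bld n=0$ on every face, and then use a standard BDM--style argument (writing $\bld v$ in terms of a Koszul complement) to conclude $\bld v=0$.

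Once well-definedness is established, polynomial preservation is immediate from the definition: if $\bld q\in\bpol{k}{K}$ then $\pv\bld q=\bld q$, and likewise $\varPi_h u=u$ for $u\in\pol{k}{K}$. Combined with boundedness of $\pv$ and $\varPi_h$ on the reference element $\widehat K$ (the DOFs are continuous linear functionals on $H^{s+1}$ for any $s\ge 0$ by the trace theorem), the Bramble--Hilbert lemma yields
\[
\|\widehat\pv\widehat{\bld q}-\widehat{\bld q}\|_{\widehat K}\le C\,|\widehat{\bld q}|_{H^{s+1}(\widehat K;\mathbb{R}^d)},
\qquad
\|\widehat\varPi_h\widehat u-\widehat u\|_{\widehat K}\le C\,|\widehat u|_{H^{s+1}(\widehat K)},
\]
for all $s\in[0,k]$ on $\widehat K$.

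Finally, I would push these estimates back to an arbitrary element $K\in\Oh$ by a standard affine change of variables. Since $\Oh$ is quasi--uniform, the Jacobians scale as $h_K^d$, the $L^2$ norm as $h_K^{d/2}$, and the $H^{s+1}$ seminorm as $h_K^{s+1-d/2}$, giving the claimed factor $h^{s+1}$ in both inequalities. The constant $C$ depends only on the shape--regularity constant and on $k$, exactly as in \cite[Proposition 2.1]{CockburnDongGuzman2008}. The only delicate point in this program is the unisolvence proof for $\pv$, because we have discarded the normal moments on the face $F_K^s$; the rest is routine.
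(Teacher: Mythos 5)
The paper does not actually prove this lemma; it delegates it to \cite[Proposition~2.1]{CockburnDongGuzman2008}, and your outline --- unisolvence, preservation of $\bpol{k}{K}$ and $\pol{k}{K}$, Bramble--Hilbert on a reference element, affine scaling --- is precisely the standard argument carried out there, so the approach is the right one. Two points are worth tightening. First, the unisolvence of $\pv$, which you correctly identify as the only delicate step, does not need the divergence-free/BDM detour. The $d$ outward normals $\bld n_1,\dots,\bld n_d$ of the retained faces $F_1,\dots,F_d=\dK\setminus F_K^s$ form a basis of $\mathbb{R}^d$; expanding $\bld v=\sum_i(\bld v\cdot\bld n_i)\,\bld m_i$ in the dual basis, each component $c_i:=\bld v\cdot\bld n_i\in\pol{k}{K}$ vanishes on $F_i$ by \eqref{eq:projI2}, so $c_i=\lambda_i\tilde c_i$ with $\tilde c_i\in\pol{k-1}{K}$ and $\lambda_i\ge 0$ the barycentric coordinate opposite $F_i$. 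Testing \eqref{eq:projI1} with $\bld w=\sum_j\tilde c_j\,\bld n_j\in\bpol{k-1}{K}$ gives $\sum_i(\lambda_i\tilde c_i,\tilde c_i)_K=0$, hence $\bld v=0$. If you keep your route, be aware that divergence-free together with vanishing normal trace alone does \emph{not} force $\bld v=0$ (e.g.\ $\nabla\times(\lambda_1\lambda_2\lambda_3)\in\bpol{2}{K}$ in two dimensions), so the interior moments must be invoked again at the final Koszul/BDM step. Second, the ``standard affine change of variables'' is not quite plain composition for $\pv$: the normal-component degrees of freedom commute with the Piola transform, not with the pullback, so one should either map through Piola (which only introduces shape-regularity constants) or avoid the reference element altogether by combining $(\pv-\mathsf{Id})\bld p=0$ for $\bld p\in\bpol{k}{K}$ with the scaled trace inequality $h_K^{1/2}\|\bld q\|_{\dK}\le C\left(\|\bld q\|_{K}+h_K\|\nabla\bld q\|_{K}\right)$ and the Deny--Lions estimate on $K$. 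With these repairs the proof is complete and matches the cited one.
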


We also need to estimate the difference between $\boldsymbol{q}$, $u$ and  $\lambda$ and their corresponding projections. 
Such an estimate is established in the following lemma. We refer the readers to Lemma~$4.2$ in \cite{AyusoMarini:cdf} for a detailed proof.
\begin{lemma}
\label{lemma_non_constant_ineqs}
Let $K\in\mathcal{T}_{h}$ and $\eta\in C^{1}(\bar{K})\cap W^{k+1,\infty}(K)$. Then, for any $(\bld v, v)\in P_k(K;\mathbb{R}^d)
\times P_k(K) $ and $\chi\in\mathbb{R}$, 
%\begin{subequations}
%\label{non_constant_ineqs}
\begin{align*}
%\label{non_constant_ineq1}
& \Vert \pv ((\eta+\chi) \bld v)-(\eta+\chi) \bld v\Vert_{K}\leq C h_{K} \Vert \eta\Vert_{W^{k+1,\infty}(K)}\Vert\bld v\Vert_{K},\\
%\label{non_constant_ineq2}
& \Vert \pv ((\eta+\chi) \bld v)-(\eta+\chi) \bld v\Vert_{F}\leq C h_{K}^{1/2} \Vert \eta\Vert_{W^{k+1,\infty}(K)}\Vert\bld v\Vert_{K},
\qquad \forall F\in \dK,\\
%\label{non_constant_ineq3}
& \Vert \varPi_h ((\eta+\chi) v)-(\eta+\chi) v\Vert_{K}\leq C h_{K} \Vert \eta\Vert_{W^{k+1,\infty}(K)}\Vert v\Vert_{K},\\
%\label{non_constant_ineq4}
& \Vert \varPi_h ((\eta+\chi) v)-(\eta+\chi) v\Vert_{F}\leq C h_{K}^{1/2} \Vert \eta\Vert_{W^{k+1,\infty}(K)}\Vert v\Vert_{K},
\qquad \forall F\in \dK.%\\
%\label{non_constant_ineq5}
%& \Vert P_{M}((\eta+\chi) v) - (\eta+\chi) v \Vert_{F}\leq Ch_{F}\Vert \eta \Vert_{W^{k+1,\infty}(F)}\Vert v\Vert_{F}.
\end{align*}
%\end{subequations}
\end{lemma}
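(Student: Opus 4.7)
The plan is to exploit the fact that both $\pv$ and $\varPi_h$ reproduce polynomials of degree at most $k$ on each element: the degrees of freedom in \eqref{eq:projI}--\eqref{eq:projII2} are unisolvent on $\bpol{k}{K}$ (respectively $\pol{k}{K}$), and each defining identity is trivially satisfied when the argument is itself a polynomial of degree at most $k$. This reduces the task to controlling how far $\eta+\chi$ deviates from a constant on $K$, since constants times a degree-$k$ polynomial remain in the polynomial space preserved by the projections.

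First I would pick a constant $\bar\eta\in\mathbb{R}$ that approximates $\eta$ on $K$ to first order, for instance $\bar\eta := \eta(x_K)$ at any point $x_K\in K$, so that by the mean value theorem
\[
\|\eta-\bar\eta\|_{L^\infty(K)} \le C h_K \|\nabla \eta\|_{L^\infty(K)} \le C h_K \|\eta\|_{W^{k+1,\infty}(K)}.
\]
Since $\bld v \in \bpol{k}{K}$, the product $(\bar\eta + \chi)\bld v$ lies in $\bpol{k}{K}$, hence $\pv((\bar\eta+\chi)\bld v) = (\bar\eta+\chi)\bld v$. By linearity,
\[
\pv((\eta+\chi)\bld v) - (\eta+\chi)\bld v = \pv((\eta-\bar\eta)\bld v) - (\eta-\bar\eta)\bld v,
\]
and the right-hand side is the object one needs to estimate.

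Next I would establish an $L^2$--stability estimate for $\pv$ of the form
\[
\|\pv\bld r\|_K \le C\bigl(\|\bld r\|_K + h_K^{1/2}\|\bld r\cdot\bld n\|_{\dK\setminus F_K^s}\bigr),
\]
via the standard Piola mapping to the reference simplex and the equivalence of all norms on the finite-dimensional image space there. Applying this to $\bld r := (\eta-\bar\eta)\bld v$, using the continuous trace inequality to bound the boundary term by $h_K^{-1/2}\|\bld v\|_K$, and multiplying through by $\|\eta-\bar\eta\|_{L^\infty(K)} \le Ch_K\|\eta\|_{W^{k+1,\infty}(K)}$, yields the first bound. For the face bound I would combine the discrete inverse trace inequality $\|\bld p\|_F \le Ch_K^{-1/2}\|\bld p\|_K$ applied to $\bld p = \pv((\eta-\bar\eta)\bld v) \in \bpol{k}{K}$ with its continuous counterpart for $(\eta-\bar\eta)\bld v$; the $h_K$ smallness of $\eta-\bar\eta$ and the $h_K^{-1/2}$ from the trace inequalities leave an overall factor of $h_K^{1/2}$. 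The two estimates for $\varPi_h$ follow by the identical template, replacing $\pv$ by $\varPi_h$, $F_K^s$ by $F_K^\star$, and $\bld v$ by $v$.

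The main obstacle is the $L^2$--stability of $\pv$ and $\varPi_h$: because the defining degrees of freedom mix interior moments with boundary moments (with one face deliberately omitted), one cannot simply invoke $L^2$--orthogonality as for a pure $L^2$--projection. The stability must be obtained by passing to the reference element, where the projection is a bounded linear map between finite-dimensional spaces by unisolvence, and then scaling back with the correct powers of $h_K$. Once this technical ingredient is in place, the remainder of the proof is a direct combination of the polynomial-preservation property of the projections and the $O(h_K)$ approximation of the smooth coefficient $\eta$ by a constant.
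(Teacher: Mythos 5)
Your argument is correct. The paper itself does not prove this lemma---it simply refers the reader to Lemma~4.2 of Ayuso--Marini---so there is no internal proof to match against; what you have written is a sound, self-contained version of the standard argument that the cited lemma is based on. The two essential points are both present in your proposal: (i) freezing $\eta$ at a point of $K$ so that $(\bar\eta+\chi)\bld v\in P_k(K;\mathbb{R}^d)$ is reproduced exactly by $\pv$ (and likewise by $\varPi_h$), which is what removes the arbitrary constant $\chi$ from the bound and leaves only $\eta-\bar\eta=O(h_K\Vert\nabla\eta\Vert_{L^\infty(K)})$; and (ii) an $L^2$-stability bound for the projections of the form $\Vert\pv\bld r\Vert_K\le C(\Vert\bld r\Vert_K+h_K^{1/2}\Vert\bld r\cdot\bld n\Vert_{\dK\setminus F_K^s})$, obtained by scaling to the reference simplex, combined with the inverse trace inequality for polynomials to absorb the boundary moments. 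The unisolvence of the degrees of freedom \eqref{eq:projI}--\eqref{eq:projII2} (your dimension counts do check out in both $d=2$ and $d=3$) and the reference-element stability are exactly the ingredients the paper itself imports from Cockburn--Dong--Guzm\'an, so relying on them is consistent with the paper's level of detail. One small observation: your proof only uses $\Vert\eta\Vert_{W^{1,\infty}(K)}$, so it actually yields a slightly sharper statement than the one with $\Vert\eta\Vert_{W^{k+1,\infty}(K)}$ on the right-hand side; since the latter dominates the former, the lemma as stated follows.
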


Now, we go back to the stability estimate in Lemma \ref{lemma_ideal_infsup}, and divide the left hand side of the inequality into two terms, namely,
\begin{align*}
B((\boldsymbol{q}_{h},u_{h},{\lambda}_{h}),(\boldsymbol{q}_\varphi,u_\varphi,\lambda_\varphi)) 
&\; = 
B((\boldsymbol{q}_{h},u_{h},{\lambda}_{h}),(\pv \boldsymbol{q}_\varphi, \varPi_h u_\varphi, P_M \lambda_\varphi))
\\
&\; +
B((\boldsymbol{q}_{h},u_{h},{\lambda}_{h}),((\mathsf{Id} - \pv) \boldsymbol{q}_\varphi, 
(\mathsf{Id} - \varPi_h) u_\varphi, (\mathsf{Id} - P_M)\lambda_\varphi)).
\end{align*}

{\bf Step Three. }
 We define the union of faces to simplify the presentation:
\begin{align*}
 \partial \mathcal{T}_h^\star &: =\; \cup_{K\in \Oh}\cup_{F\in \dK\backslash F_K^\star} F,\\
 \partial \mathcal{T}_h^s &: =\; \cup_{K\in \Oh}F_K^s,
\end{align*}
where $F_K^\star$ and $F_K^s$ are defined in \eqref{notation_F}.
Now, we are ready to derive the discrete stability result for the HDG method.
\begin{lemma}
\label{lemma_practical_infsup}
Let $\tau$ satisfies assumptions \eqref{assumption_tau_general}, 
then there exists $h_0$, independent of $\epsilon$, so that
for any $h<h_0$, we have the following stability estimate:
for all 
$(\boldsymbol{q}_{h},u_{h},{\lambda}_{h})\in \boldsymbol{V}_{h}\times W_{h}\times M_{h}(0)$,
\begin{align*}
%\label{practical_infsup}
\sup_{0\not =(\boldsymbol{r}_h,w_h,\mu_h)\in \bld V_h\times W_h\times M_h(0)}
\frac{B((\boldsymbol{q}_{h},u_{h},{\lambda}_{h}),(\boldsymbol{r}_h,w_h,\mu_h))}{\|| (\boldsymbol{r}_h, w_h, \mu_h)|\|_{e}} 
\geq  \; C \|| (\boldsymbol{q}_h, u_h, \lambda_h)|\|_{e}.
\end{align*}
\end{lemma}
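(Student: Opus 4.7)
\medskip
\noindent\textbf{Proof plan for Lemma~\ref{lemma_practical_infsup}.}
The natural strategy is to take the continuous ``magic'' test function from Lemma~\ref{lemma_ideal_infsup} and project it into the discrete space, then control the resulting consistency error. Concretely, I would choose
\[
(\bld r_h, w_h, \mu_h) \defn (\pv \bld q_\varphi,\; \varPi_h u_\varphi,\; P_M \lambda_\varphi),
\]
where $\bld q_\varphi=\varphi\bld q_h$, $u_\varphi=\varphi u_h$, $\lambda_\varphi=\varphi\lambda_h$, with $\varphi$ the weight built in \eqref{weight_function}, and $P_M$ the face-wise $L^2$ projection onto $M_h$. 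Since $\lambda_h|_{\partial\Omega}=0$ and $\varphi$ is smooth, $P_M\lambda_\varphi\in M_h(0)$, so the triple is admissible. Writing $\delta_{\bld r}\defn(\mathrm{Id}-\pv)\bld q_\varphi$, $\delta_w\defn(\mathrm{Id}-\varPi_h)u_\varphi$, $\delta_\mu\defn(\mathrm{Id}-P_M)\lambda_\varphi$, the identity
\[
B((\bld q_h,u_h,\lambda_h),(\pv\bld q_\varphi,\varPi_h u_\varphi,P_M\lambda_\varphi))
= B((\bld q_h,u_h,\lambda_h),(\bld q_\varphi,u_\varphi,\lambda_\varphi))
- B((\bld q_h,u_h,\lambda_h),(\delta_{\bld r},\delta_w,\delta_\mu))
\]
together with Lemma~\ref{lemma_ideal_infsup} bounds the first summand below by $C\vertiii{(\bld q_h,u_h,\lambda_h)}_e^2$. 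The core task is to show that the second summand is at most $C_0 h^{1/2}\,\vertiii{(\bld q_h,u_h,\lambda_h)}_e^2$, so that for $h<h_0$ with $h_0\defn (C/(2C_0))^2$ it can be absorbed; together with the bound $\vertiii{(\pv\bld q_\varphi,\varPi_h u_\varphi,P_M\lambda_\varphi)}_e\le C\|\varphi\|_{L^\infty}\vertiii{(\bld q_h,u_h,\lambda_h)}_e$, this yields the inf--sup inequality.

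The perturbation splits into seven pieces coming from the definition \eqref{bilinear_form}; I would handle them as follows. The volume term $(\epsilon^{-1}\bld q_h,\delta_{\bld r})_{\Oh}$ is controlled directly by $Ch\,\|\epsilon^{-1/2}\bld q_h\|_{\Oh}^2$ using Lemma~\ref{lemma_non_constant_ineqs}. After integrating $-(u_h,\nabla\cdot\delta_{\bld r})_K$ by parts and combining with $\langle\lambda_h,\delta_{\bld r}\cdot\bld n\rangle_{\dK}$, the properties \eqref{eq:projI1}--\eqref{eq:projI2} kill everything except a single face term $-\langle u_h-\lambda_h,\delta_{\bld r}\cdot\bld n\rangle_{F_K^s}$ per element. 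Bounding $\|u_h-\lambda_h\|_{F_K^s}$ via the face weight $(\tau_K^{\bld v})^{-1/2}$ from \eqref{assumption_tau_3}, combined with $\|\delta_{\bld r}\|_{F_K^s}\le C h_K^{1/2}\|\bld q_h\|_K$ and a case split on whether $\epsilon/h_K\ge 1$, gives the desired $h^{1/2}$ factor. Symmetrically, after integrating $-(\bld q_h,\nabla\delta_w)_K$ by parts, the bulk term vanishes by \eqref{eq:projII1} and the contribution on $F_K^\star$ vanishes by \eqref{eq:projII2}, leaving only faces in $\dK\setminus F_K^\star$. Grouping these with the numerical-trace face terms $\langle(\bld q_h+\bld\beta\lambda_h)\cdot\bld n+\tau(u_h-\lambda_h),\delta_w\rangle_{\dOh}$ and $-\langle\cdots,\delta_\mu\rangle_{\dOh}$, one can use $\delta_\mu=\delta_w-(P_M\delta_w-\delta_\mu+\delta_w-P_M\delta_w)$-type rearrangements and assumption \eqref{assumption_tau_1} to absorb them into the stabilization norm $\||\tau-\tfrac{1}{2}\bld\beta\cdot\bld n|^{1/2}(u_h-\lambda_h)\|_{\dOh}$. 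The convection volume piece $-(\bld\beta u_h,\nabla\delta_w)_{\Oh}-((\nabla\cdot\bld\beta)u_h,\delta_w)_{\Oh}$ is treated by splitting $\bld\beta=\bar{\bld\beta}_K+(\bld\beta-\bar{\bld\beta}_K)$ on each $K$ with $\bar{\bld\beta}_K$ a local polynomial approximation of degree $k-1$, so that the leading part is eliminated by \eqref{eq:projII1} and the remainder carries an explicit factor $h_K\|\bld\beta\|_{W^{1,\infty}}$ from Lemma~\ref{lemma_non_constant_ineqs}.

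The main obstacle is the bookkeeping on faces: a single globally valid bound for $\langle u_h,\delta_{\bld r}\cdot\bld n\rangle$-type terms is too weak, and one really must exploit (i) the asymmetric choice of the ``sacrificial'' faces $F_K^\star,F_K^s$ in the projections $\pv,\varPi_h$, (ii) the lower bound \eqref{assumption_tau_3} on $\tau_K^{\bld v}$, and (iii) a case analysis according to the sign of $\epsilon/h_K-1$. Once these cancellations are set up, each remaining residual is bounded by $h^{1/2}$ times a product of pieces of $\vertiii{(\bld q_h,u_h,\lambda_h)}_e$, and Young's inequality closes the estimate. Choosing $h_0$ to make the sum of all such perturbations smaller than half of the lower bound of Lemma~\ref{lemma_ideal_infsup} concludes the proof. \qed
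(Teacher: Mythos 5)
Your plan is correct and coincides with the paper's own proof: the authors take exactly the test triple $(\pv\bld q_\varphi,\varPi_h u_\varphi,P_M\lambda_\varphi)$, split off the perturbation $B((\bld q_h,u_h,\lambda_h),(\delta_{\bld r},\delta_w,\delta_\mu))$, use the sacrificial faces $F_K^s$, $F_K^\star$ together with \eqref{assumption_tau_3}, \eqref{assumption_tau_1}, \eqref{assumption_tau_0} and Lemma~\ref{lemma_non_constant_ineqs} to bound it by $Ch^{1/2}\vertiii{(\bld q_h,u_h,\lambda_h)}_e^2$, and absorb it for $h<h_0$. The only cosmetic differences are that the paper uses the piecewise--constant projection $\bld P_{0,h}\bld\beta$ where you propose a degree--$(k-1)$ approximant, and that the $\delta_\mu$ face terms drop out by single--valuedness and $L^2$--orthogonality of $P_M$ rather than by the rearrangement you sketch.
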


\begin{proof}
For any $(\boldsymbol{r},w,\mu)\in H^1(\Oh;\mathbb{R}^d)\times H^1(\Oh)
\times L^2(\Eh)$ with $\mu=0 $ on $\dO$, define 
\[
\bld {\delta r} := \bld {r}-\pv \bld {r}, \delta w := w - \varPi_hw, \delta\mu := \mu-P_M\mu.
\]
Using integration by parts and the definition of 
the projections, we get
\begin{align*}
%\label{proj-err1}
& B((\boldsymbol{q}_h,u_h,\lambda_h),(\boldsymbol{\delta r},\delta w,\delta\mu))\\
= & (\epsilon^{-1}\boldsymbol{q}_h,\boldsymbol{\delta r})_{\mathcal{T}_{h}}-(u_h,\nabla\cdot\boldsymbol{\delta r})_{\mathcal{T}_{h}}
+\langle \lambda_h, \boldsymbol{\delta r}\cdot \boldsymbol{n}\rangle_{\partial\mathcal{T}_{h}} \nonumber\\
& -(\boldsymbol{q}_h+\boldsymbol{\beta}u_h,\nabla \delta w)_{\mathcal{T}_{h}}
+\langle (\boldsymbol{q}_h+\boldsymbol{\beta}\lambda_h)\cdot\boldsymbol{n}
+\tau (u_h-\lambda_h),\delta w\rangle_{\partial\mathcal{T}_{h}}\nonumber\\
& -((\nabla\cdot\boldsymbol{\beta})u_h,\delta w)_{\mathcal{T}_{h}}
-\langle(\boldsymbol{q}_h+\boldsymbol{\beta}\lambda_h)\cdot\boldsymbol{n}
+\tau (u_h-\lambda_h), \delta\mu
\rangle_{\partial\mathcal{T}_{h}}\nonumber\\
= & (\epsilon^{-1}\boldsymbol{q}_h,\boldsymbol{\delta r})_{\mathcal{T}_{h}}+(\nabla u_h,\boldsymbol{\delta r})_{\mathcal{T}_{h}}
+\langle \lambda_h - u_h, \boldsymbol{\delta r}\cdot \boldsymbol{n}\rangle_{\partial\mathcal{T}_{h}}\nonumber\\
& +(\nabla\cdot \boldsymbol{q}_h,\delta w)_{\mathcal{T}_{h}} +(\bld \beta\cdot\nabla u_h,\delta w)_{\mathcal{T}_{h}}
+\langle (\tau- \bld \beta\cdot\bld n) (u_h-\lambda_h),\delta w\rangle_{\partial\mathcal{T}_{h}}\nonumber\\
&
-\langle(\boldsymbol{q}_h+\boldsymbol{\beta}\lambda_h)\cdot\boldsymbol{n}
+\tau (u_h-\lambda_h), \delta\mu
\rangle_{\partial\mathcal{T}_{h}}\nonumber\\
= & (\epsilon^{-1}\boldsymbol{q}_h,\boldsymbol{\delta r})_{\mathcal{T}_{h}}
+\langle \lambda_h - u_h, \boldsymbol{\delta r}\cdot \boldsymbol{n}\rangle_{\partial\mathcal{T}_{h}^s}
+((\bld  \beta -\bld P_{0,h}\bld \beta)\cdot\nabla u_h,\delta w)_{\mathcal{T}_{h}}\nonumber\\
& +\langle (\tau- \bld \beta\cdot \bld n)(u_h-\lambda_h),\delta w\rangle_{\partial\mathcal{T}_{h}^\star}
-\langle \bld \beta\cdot\bld n (u_h-\lambda_h),\delta w\rangle_{\partial\mathcal{T}_{h}\backslash\partial\mathcal{T}_{h}^\star},\nonumber
\end{align*}
where $\bld P_{0,h}$ is the vectorial piecewise-constant projection. 
%So we have the following estimate
%\begin{align*}
%& B((\boldsymbol{q}_h,u_h,\lambda_h),(\boldsymbol{\delta r},\delta w,\delta\mu))\\
%= & (\epsilon^{-1}\boldsymbol{q}_h,\boldsymbol{\delta r})_{\mathcal{T}_{h}}
%+\langle \lambda_h - u_h, \boldsymbol{\delta r}\cdot \boldsymbol{n}\rangle_{\partial\mathcal{T}_{h}^\star}\\
%& +((\bld  \beta -\bld P_{0.h}\bld \beta)\cdot\nabla u_h,\delta w)_{\mathcal{T}_{h}}
%+\langle (\tau- \bld \beta\cdot\bld n) (u_h-\lambda_h),\delta w\rangle_{\partial\mathcal{T}_{h}}\\
%\le & \frac{1}{2}h^{1/2}(\epsilon^{-1}\boldsymbol{q}_h,\boldsymbol{q}_h)_{\mathcal{T}_{h}} +
%\frac{1}{2} h^{-1/2}(\epsilon^{-1}\boldsymbol{\delta r},\boldsymbol{\delta r})_{\mathcal{T}_{h}}
%+\frac{1}{2} h^{1/4}\langle|\bld \beta\cdot \bld n| \lambda_h - u_h, \lambda_h - u_h\rangle_{\partial\mathcal{T}_{h}^\star}\\
%& +\frac{1}{2} h^{-1/4}\langle|\bld \beta\cdot \bld n|^{-1} \boldsymbol{\delta r}, \boldsymbol{\delta r}
%\rangle_{\partial\mathcal{T}_{h}^\star}\\
%& +((\bld  \beta -\bld P_{0.h}\bld \beta)\cdot\nabla u_h,\delta w)_{\mathcal{T}_{h}}
%+\langle (\tau- \bld \beta\cdot\bld n) (u_h-\lambda_h),\delta w\rangle_{\partial\mathcal{T}_{h}}
%\end{align*}

Now, we take $(\bld r, w,\mu) = (\boldsymbol{q}_\varphi,u_\varphi,\lambda_\varphi)$ as in Lemma \ref{lemma_ideal_infsup}. By  Cauchy--Schwartz inequality and the approximation 
results in Lemma \ref{lemma_non_constant_ineqs},
we have 
\begin{align*}
(\epsilon^{-1}\boldsymbol{q}_h,\boldsymbol{\delta q}_\varphi)_{\mathcal{T}_{h}}\le &\;
\|\epsilon^{-1/2}\boldsymbol{q}_h\|_{\Oh}\|\epsilon^{-1/2}\boldsymbol{\delta q}_\varphi\|_{\mathcal{T}_{h}} \\
\le &\; C h \|\epsilon^{-1/2}\boldsymbol{q}_h\|_{\Oh}^2,%\\
\end{align*}
 \begin{align*}
 \langle \lambda_h - u_h, \boldsymbol{\delta q}_\varphi\cdot \boldsymbol{n}\rangle_{\partial\mathcal{T}_{h}^s} 
\le &\; \left\||\tau-\frac{1}{2}\bld \beta\cdot\bld n|^{1/2} (\lambda_h - u_h)\right\|_{\partial\mathcal{T}_{h}^s}
\left\||\tau-\frac{1}{2}\bld \beta\cdot\bld n|^{-1/2} \bld{\delta q}_\varphi\right\|_{\partial\mathcal{T}_{h}^s}\\
\le &\; C \left(\frac{\epsilon}{\tau^{\bld v}}\right)^{1/2}\left\||\tau - \frac{1}{2}\bld \beta\cdot\bld n|^{1/2}
 (\lambda_h - u_h)\right\|_{\partial\mathcal{T}_{h}^s} \|\epsilon^{-1/2}\bld{\delta q}_\varphi\|_{
\partial\mathcal{T}_h^s} \\
%\quad \quad
%\text{ by \eqref{assumption_tau_3}}\\
\le &\; C \left(\frac{\epsilon h}{\tau^{\bld v}}\right)^{1/2}\left\||\tau - \frac{1}{2}\bld \beta\cdot\bld n|^{1/2} 
(\lambda_h - u_h)\right\|_{\partial\mathcal{T}_{h}^s} 
\|\epsilon^{-1/2}\bld{q}_h\|_{\Oh}\\
\le &\; C (h^2+\epsilon h)^{1/2}\left\||\tau - \frac{1}{2}\bld \beta\cdot\bld n|^{1/2} (\lambda_h - u_h)\right\|_{\partial\mathcal{T}_{h}^s} 
\|\epsilon^{-1/2}\bld{q}_h\|_{\Oh}\quad  \text{ by \eqref{assumption_tau_3}},%\\
\end{align*}
\begin{align*}
((\bld  \beta -\bld P_{0,h}\bld \beta)\cdot\nabla u_h,\delta u_\varphi)_{\mathcal{T}_{h}} 
\le &\; C h \|\nabla u_h\|_{\Oh}\|\delta u_\varphi\|_{\Oh}\\
\le &\; Ch \|u_h\|_{\Oh}^2\\
\langle (\tau-\bld \beta\cdot\bld n) (u_h-\lambda_h),\delta u_\varphi\rangle_{\partial\mathcal{T}_{h}^\star}
\le &\;  \left\||\tau-\bld \beta\cdot\bld n|^{1/2} (\lambda_h - u_h)\right\|_{\partial\mathcal{T}_{h}^\star} 
\left\||\tau-\bld \beta\cdot\bld n|^{1/2} \delta u_\varphi\right\|_{\partial\mathcal{T}_{h}^\star}\\
\le &\; C \left\||\tau-\frac{1}{2}\bld \beta\cdot\bld n|^{1/2} (\lambda_h - u_h)\right\|_{\partial\mathcal{T}_{h}^\star} 
\left\| \vert \tau -\bld\beta\cdot \bold n \vert^{1/2} \delta u_\varphi\right\|_{\partial\mathcal{T}_{h}^\star} \quad
\text{ by \eqref{assumption_tau_1}}\\
\le &\; C(h(\tau^w+1))^{1/2} \left\||\tau-\frac{1}{2}\bld \beta\cdot\bld n|^{1/2} (\lambda_h - u_h)
\right\|_{\partial\mathcal{T}_{h}^\star} \|u_h\|_{\Oh}\\
\le &\; C h^{1/2} \left\||\tau-\frac{1}{2}\bld \beta\cdot\bld n|^{1/2} (\lambda_h - u_h)\right\|_{\partial\mathcal{T}_{h}} \|u_h\|_{\Oh} \quad
\text{ by \eqref{assumption_tau_0}}\\
\langle \bld \beta\cdot\bld n (u_h-\lambda_h),\delta u_\varphi
\rangle_{\partial\mathcal{T}_{h}\backslash\partial\mathcal{T}_{h}^\star}
\le &\;  \left\||\bld \beta\cdot\bld n|^{1/2}(\lambda_h - u_h)\right\|_{\partial\mathcal{T}_{h}\backslash\partial\mathcal{T}_{h}^\star} 
\left\|\delta u_\varphi\right\|_{\partial\mathcal{T}_{h}\backslash\partial\mathcal{T}_{h}^\star}\\
\le &\; C \left\||\tau-\frac{1}{2}\bld \beta\cdot\bld n|^{1/2}(\lambda_h - u_h)\right\|_{\partial\mathcal{T}_{h}\backslash\partial\mathcal{T}_{h}^\star} 
\left\|\delta u_\varphi\right\|_{\partial\mathcal{T}_{h}\backslash\partial\mathcal{T}_{h}^\star}\quad
\text{ by \eqref{assumption_tau_1}}\\
\le &\; C h^{1/2}\left\||\tau-\frac{1}{2}\bld \beta\cdot\bld n|^{1/2}(\lambda_h - u_h)\right\|_{\partial\mathcal{T}_{h}} 
\left\|u_h\right\|_{\Oh}.\\
\end{align*}
Summing the above inequalities all together, we get
\begin{align*}
B((\boldsymbol{q}_h,u_h,\lambda_h),(\boldsymbol{\delta q}_\varphi,\delta u_\varphi,\delta\lambda_\varphi))
\le & \;C h^{1/2} \vertiii{(\bld q_h, u_h,\lambda_h)}_e^2.
% + C h^{3/2} \left\|\lambda_h - u_h\right\|_{\partial\mathcal{T}_{h}\backslash\partial\mathcal{T}_{h}^\star} 
%\left\|u_h\right\|_{\Oh}.
\end{align*}
Hence, choosing $h$ sufficiently small, we can ensure that 
\begin{align*}
 B((\boldsymbol{q}_h,u_h,\lambda_h),
(\boldsymbol{\delta q}_\varphi,\delta u_\varphi,\delta\lambda_\varphi)) \le  
\frac{1}{2}B((\boldsymbol{q}_h,u_h,\lambda_h),(\boldsymbol{q}_\varphi,u_\varphi,\lambda_\varphi)).
\end{align*}
Consequently, we obtain 
\begin{align*}
%\label{es1}
 B((\boldsymbol{q}_h,u_h,\lambda_h),(\pv\boldsymbol{ q}_\varphi,P_h u_\varphi,P_M \lambda_\varphi)) \ge C  \vertiii{(\bld q_h, u_h,\lambda_h)}_e^2.
\end{align*}
On the other hand, it is easy to obtain the following estimates
\begin{align*}
%\label{es2}
\vertiii{ (\pv\boldsymbol{ q}_\varphi,P_h u_\varphi,P_M \lambda_\varphi)}_e \le &\;C \vertiii{ (\boldsymbol{ q}_{h},u_{h},\lambda_{h})}_e .
\end{align*}
We conclude the proof by combining these two estimates. %estimates \eqref{es1} and \eqref{es2}.
\end{proof}   
%Let us remark that the same stability result can be derived by relaxing assumption \eqref{assumption_tau_3} 
%to the following one: assume there exists a positive constant $C$ so that
%\begin{align*}
% \inf_{\bld x\in F}\left
%( \tau - \frac{1}{2}\bld \beta(\bld x)\cdot\bld n\right) \ge &\;C \epsilon& \exists F \in \dK ,\forall K\in \Oh.
%\end{align*} 
%In this case, the bound for the second term in \eqref{proj-err1} would be 
%\[
% \langle \lambda_h - u_h, \boldsymbol{\delta q}\cdot \boldsymbol{n}\rangle_{\partial\mathcal{T}_{h}^\star} 
%\le \; C h^{1/2}\left\||\tau - \frac{1}{2}\bld \beta\cdot\bld n|^{1/2} (\lambda_h - u_h)\right\|_{\partial\mathcal{T}_{h}^\star} 
%\|\epsilon^{-1/2}\bld{q}_h\|_{\Oh}. 
%\]

\subsection{The Error equation}
Here, we obtain the equation satisfied by the errors. 
Note that by Galerkin--orthogonality, we have
\begin{align}
\label{galerkin-o}
B((\boldsymbol{q} - \boldsymbol{q}_h ,u - u_h,u - \widehat{u}_h), (\boldsymbol{r},w,\mu)) = 0 \quad \forall (\bld r, w, \mu)\in V_h\times W_h\times M_h(0),
\end{align}
where $(\bld q, u)$ is the exact solution of equations \eqref{cd_first_order}.

We define the following quantities that will be used in the analysis:
%\begin{subequations}
%\label{projection_errors}
\begin{align*}
%\label{projection_error2}
&\eq:= \boldsymbol{q}_{h}- \pv \boldsymbol{q}, \quad \bld{\delta q} = \boldsymbol{q}- \pv \boldsymbol{q}, \\
%\label{projection_error1}
&\eu :=  u_{h} - \varPi_h u,\quad \delta u := u - \varPi_h u, \\
%\label{projection_error3}
&\euhat :=\widehat{u}_{h}- P_{M}u, \quad \widehat{\delta u} = u -P_{M}u .
\end{align*}
%\end{subequations}
%Here, $\Pi_h$ is any bounded projection from $H^{1}(\mathcal{T}_{h})$ onto $W_{h}$, which satisfies
%\begin{align*}
%(\Pi_h v - v, w)_{K}=0,\quad \forall w\in P_{k-1}(K), K\in\mathcal{T}_{h}.
%\end{align*}
Recall that  $\pv$ and $\varPi_h$ are the projections defined in \eqref{eq:projI}, and $P_{M}$ is the $L^{2}$--projection 
from $L^{2}(\mathcal{E}_{h})$ onto $M_{h}$. 

Now, we are ready to present our error equation.
\begin{lemma}
 \label{lemma-error-eq1}
The error equation takes the following form.
\begin{align}
 \label{error-eq1}
B((\eq  ,\eu ,\euhat), (\boldsymbol{r},w,\mu))  %\\
= &\;(\epsilon^{-1}\bld{\delta q},\boldsymbol{r})_{\mathcal{T}_{h}} 
+\langle \boldsymbol{\delta q}\cdot\boldsymbol{n}
,w - \mu \rangle_{\partial\mathcal{T}_{h}^s} -(\boldsymbol{\beta}\,\delta u,\nabla  w)_{\mathcal{T}_{h}}  \\
& -((\nabla\cdot\boldsymbol{\beta})\delta u,w)_{\mathcal{T}_{h}}
+\langle\boldsymbol{\beta}\cdot\boldsymbol{n}\widehat{\delta u}
, w
\rangle_{\partial\mathcal{T}_{h}}+\langle\tau \delta u
, w - \mu
\rangle_{\partial\mathcal{T}_{h}^\star}\nonumber,
\end{align}
for all $(\bld r, w, \mu)\in V_h\times W_h\times M_h(0)$.
\end{lemma}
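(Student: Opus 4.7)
The plan is to reduce everything to Galerkin orthogonality plus a term-by-term application of the defining properties of the three projections. By \eqref{galerkin-o} and linearity, if I write $\bld q - \bld q_h = \bld{\delta q} - \eq$, $u - u_h = \delta u - \eu$ and $u - \widehat u_h = \widehat{\delta u} - \euhat$, I immediately obtain
\begin{align*}
B((\eq,\eu,\euhat),(\boldsymbol r,w,\mu)) = B((\bld{\delta q},\delta u,\widehat{\delta u}),(\boldsymbol r,w,\mu))
\end{align*}
for every discrete test triple $(\boldsymbol r,w,\mu)\in \bld V_h\times W_h\times M_h(0)$. It then remains to expand the right-hand side using \eqref{bilinear_form} and collect terms.

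The expansion of $B((\bld{\delta q},\delta u,\widehat{\delta u}),(\boldsymbol r,w,\mu))$ has seven kinds of summands, and the main work is to identify which of them are killed by the projection properties. The volume term $(\delta u,\nabla\cdot\boldsymbol r)_{\Oh}$ vanishes because $\nabla\cdot\boldsymbol r|_K\in P_{k-1}(K)$ and by \eqref{eq:projII1}; the term $(\bld{\delta q},\nabla w)_{\Oh}$ vanishes because $\nabla w|_K\in\bpol{k-1}{K}$ and by \eqref{eq:projI1}. The trace term $\langle\widehat{\delta u},\boldsymbol r\cdot\boldsymbol n\rangle_{\partial\Oh}$ vanishes since $\boldsymbol r\cdot\boldsymbol n|_F\in P_k(F)$ on each side of $F$ and $P_M$ is the $L^2$-projection onto $P_k(F)$.

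The face contributions coming from the fifth and seventh summands I would combine into brackets paired with $(w-\mu)$ and then split into the pieces involving $\bld{\delta q}\cdot\bld n$, $\tau\,\delta u$, $-\tau\,\widehat{\delta u}$ and $\boldsymbol\beta\cdot\boldsymbol n\,\widehat{\delta u}$. Property \eqref{eq:projI2} forces the $\bld{\delta q}\cdot\bld n$ part to survive only on the faces in $\partial\Oh^s$; property \eqref{eq:projII2}, together with the fact that $\tau$ is piecewise constant on $\partial\Oh$, forces the $\tau\,\delta u$ part to survive only on $\partial\Oh^\star$; the $\tau\,\widehat{\delta u}$ piece disappears because $\tau(w-\mu)|_F\in P_k(F)$ is orthogonal to $\widehat{\delta u}=u-P_M u$ on each face. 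Finally, the pairing $\langle\boldsymbol\beta\cdot\boldsymbol n\,\widehat{\delta u},\mu\rangle_{\partial\Oh}$ reduces to $0$ because $\widehat{\delta u}$ and $\mu$ are single-valued across each interior face while $\boldsymbol n$ flips sign, and $\mu=0$ on $\partial\Omega$; this is exactly what leaves the nonsymmetric pairing $\langle\boldsymbol\beta\cdot\boldsymbol n\,\widehat{\delta u},w\rangle_{\partial\Oh}$ (with $w$, not $w-\mu$) in the final formula.

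Collecting the surviving pieces together with the unchanged volume contributions $(\epsilon^{-1}\bld{\delta q},\boldsymbol r)_{\Oh}$, $-(\boldsymbol\beta\,\delta u,\nabla w)_{\Oh}$ and $-((\nabla\cdot\boldsymbol\beta)\delta u,w)_{\Oh}$ yields exactly \eqref{error-eq1}. There is no real obstacle in the argument; the proof is essentially bookkeeping, and its interest lies precisely in the fact that the three projections have been engineered so that the residual lives only on the \emph{small} face sets $\partial\Oh^s$ and $\partial\Oh^\star$. The one subtle point to be careful with is matching the sign conventions so that the ``$+w$'' contribution from the fifth summand of \eqref{bilinear_form} and the ``$-\mu$'' contribution from the seventh summand combine correctly into the $(w-\mu)$ brackets.
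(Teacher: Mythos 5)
Your proposal is correct and follows essentially the same route as the paper: reduce to $B((\bld{\delta q},\delta u,\widehat{\delta u}),(\boldsymbol r,w,\mu))$ via Galerkin orthogonality \eqref{galerkin-o}, expand \eqref{bilinear_form}, and kill or localize each term using \eqref{eq:projI1}--\eqref{eq:projII2}, the $L^2$--orthogonality of $\widehat{\delta u}$ on each face, and the single--valuedness argument for $\langle\boldsymbol\beta\cdot\boldsymbol n\,\widehat{\delta u},\mu\rangle_{\partial\mathcal{T}_h}$. You in fact spell out the justifications (e.g.\ that $\tau$ being piecewise constant is what puts $\tau(w-\mu)|_F$ in $P_k(F)$) more explicitly than the paper does.
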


\begin{proof}
We use the Galerkin--orthogonality \eqref{galerkin-o} and the definition of the projections to prove the result. 
For all  $(\bld r, w, \mu)\in V_h\times W_h\times M_h(0)$, we have
 \begin{align*}
%\label{proj-err2}
B((\eq,\eu,\euhat),(\boldsymbol{r}, w,\mu)) =&\;B((\bld{\delta q},\delta u,\widehat{\delta u}),(\boldsymbol{r},w,\mu)) \\
= & \;(\epsilon^{-1}\bld{\delta q},\boldsymbol{r})_{\mathcal{T}_{h}}-(\delta u,\nabla\cdot\boldsymbol{r})_{\mathcal{T}_{h}}
+\langle \widehat{\delta u}, \boldsymbol{r}\cdot \boldsymbol{n}\rangle_{\partial\mathcal{T}_{h}} \nonumber\\
& -(\bld{\delta q}+\boldsymbol{\beta}\delta u,\nabla w)_{\mathcal{T}_{h}}
+\langle (\bld{\delta q}+\boldsymbol{\beta}\widehat{\delta u})\cdot\boldsymbol{n}
+\tau (\delta u-\widehat{\delta u}),w-\mu\rangle_{\partial\mathcal{T}_{h}}\nonumber\\
& -((\nabla\cdot\boldsymbol{\beta})\delta u_h,w)_{\mathcal{T}_{h}}\nonumber \\
= &\; (\epsilon^{-1}\bld{\delta q},\boldsymbol{r})_{\mathcal{T}_{h}} 
+\langle \boldsymbol{\delta q}\cdot\boldsymbol{n}
,w - \mu \rangle_{\partial\mathcal{T}_{h}^s} -(\boldsymbol{\beta}\,\delta u,\nabla  w)_{\mathcal{T}_{h}}  \nonumber\\
& -((\nabla\cdot\boldsymbol{\beta})\delta u,w)_{\mathcal{T}_{h}}
+\langle\boldsymbol{\beta}\cdot\boldsymbol{n}\widehat{\delta u}
, w - \mu
\rangle_{\partial\mathcal{T}_{h}}+\langle\tau \delta u
, w - \mu
\rangle_{\partial\mathcal{T}_{h}^\star} \nonumber \\
=  &\; (\epsilon^{-1}\bld{\delta q},\boldsymbol{r})_{\mathcal{T}_{h}} 
+\langle \boldsymbol{\delta q}\cdot\boldsymbol{n}
,w - \mu \rangle_{\partial\mathcal{T}_{h}^s} -(\boldsymbol{\beta}\,\delta u,\nabla  w)_{\mathcal{T}_{h}}  \nonumber\\
& -((\nabla\cdot\boldsymbol{\beta})\delta u,w)_{\mathcal{T}_{h}}
+\langle\boldsymbol{\beta}\cdot\boldsymbol{n}\widehat{\delta u}
, w
\rangle_{\partial\mathcal{T}_{h}}+\langle\tau \delta u
, w - \mu
\rangle_{\partial\mathcal{T}_{h}^\star}, \nonumber
\end{align*}
where in the last step we used the fact that $\langle\boldsymbol{\beta}\cdot\boldsymbol{n}\widehat{\delta u}
, \mu
\rangle_{\partial\mathcal{T}_{h}} = 0$ for all $\mu \in M_h(0)$.
\end{proof}

\subsection{The error analysis}
Now, we are ready to prove our main results, Theorem~\ref{MainTh1} and Theorem~\ref{MainTh2}. 
%We show detailed proof for Theorem~\ref{MainTh1} and sketch the proof of the improved results in Theorem \ref{MainTh2}. 
%Actually, the proof of Theorem~\ref{MainTh1} and Theorem~\ref{MainTh2} are identically the same, hence we only show the proof of Theorem~\ref{MainTh1}.
%\begin{proof}
In order to prove Theorem~\ref{MainTh1} and Theorem~\ref{MainTh2}. We only need to bound the right hand side of the error equation \eqref{error-eq1} to get the error estimates.
%Let us choose $\Pi_h$ in \eqref{projection_error1} to be the $L^2$ projection. 
For all  $(\bld r, w, \mu)\in V_h\times W_h\times M_h(0)$, we have
%\begin{subequations}
%\label{error-est}
\begin{align*}
(\epsilon^{-1}\bld{\delta q},\boldsymbol{r})_{\mathcal{T}_{h}}  \le &\; 
\|\epsilon^{-1/2}\bld{\delta q} \|_{\Oh}\|\epsilon^{-1/2}\boldsymbol{r}\|_{\mathcal{T}_{h}},\\
%\label{difficult_term}
 \langle \boldsymbol{\delta q}\cdot\boldsymbol{n}
,w - \mu \rangle_{\partial\mathcal{T}_{h}^s}
\le &\;\left\||\tau-\frac{1}{2}\bld \beta\cdot\bld n|^{-1/2} \bld{\delta q}\right\|_{\partial\mathcal{T}_{h}^s}
 \left\||\tau-\frac{1}{2}\bld \beta\cdot\bld n|^{1/2} (w - \mu)\right\|_{\partial\mathcal{T}_{h}^s}
 \nonumber\\
\le &\; C \left(\frac{\epsilon}{\tau^{\bld v}}\right)^{1/2}\|\epsilon^{-1/2}\bld{\delta q}\|_{\partial\mathcal{T}_{h}^{s}} \left\||
\tau - \frac{1}{2}\bld \beta\cdot\bld n|^{1/2} (w - \mu)\right\|_{\partial\mathcal{T}_{h}^s},\\
%\nonumber\\
%\le &\; C h^{1/2}\|\epsilon^{-1/2}\bld{\delta q}\|_{\partial\mathcal{T}_{h}^{s}} \left\||
%\tau - \frac{1}{2}\bld \beta\cdot\bld n|^{1/2} (w - \mu)\right\|_{\partial\mathcal{T}_{h}^s}\quad\text{ {by \eqref{assumption_tau_3}}} \\
(\boldsymbol{\beta}\,\delta u,\nabla  w)_{\mathcal{T}_{h}}
= &\;\left((\boldsymbol{\beta}-\bld P_{0,h}\bld \beta)\,\delta u,\nabla  w\right)_{\mathcal{T}_{h}} \nonumber\\
\le &\;  C h \|\delta u\|_{\Oh}\|\nabla w\|_{\Oh}\nonumber\\
\le &\; C \|\delta u\|_{\Oh}\|w\|_{\Oh},
\end{align*}
\begin{align*}
((\nabla\cdot\boldsymbol{\beta})\delta u,w)_{\mathcal{T}_{h}} \le &\; C \|\delta u\|_{\Oh}\|w\|_{\Oh},\\
\langle\boldsymbol{\beta}\cdot\boldsymbol{n}\widehat{\delta u}
, w
\rangle_{\partial\mathcal{T}_{h}}= &\; 
\langle(\boldsymbol{\beta}-\bld P_{0,h}\bld \beta)\cdot\boldsymbol{n}\widehat{\delta u}
, w
\rangle_{\partial\mathcal{T}_{h}}\nonumber\\
\le & C h \|\widehat{\delta u}\|_{\dOh}\|w\|_{\dOh}\nonumber\\
\le & C h^{1/2} \|\widehat{\delta u}\|_{\dOh}\|w\|_{\Oh}\\
%\label{trouble-term}
\langle\tau \delta u
, w - \mu
\rangle_{\partial\mathcal{T}_{h}^\star}
\le & \|\tau^{1/2} \delta u\|_{\partial\mathcal{T}_{h}^\star}\|\tau^{1/2}(w-\mu)\|_{\partial\mathcal{T}_{h}^\star}\nonumber\\
\le & C \|\tau^{1/2}\delta u\|_{\partial\mathcal{T}_{h}^\star}\left\||\tau-\frac{1}{2}\bld \beta\cdot\bld n|^{1/2} (w-\mu)\right\|_{\partial\mathcal{T}_{h}^\star}
\quad\qquad\text{ {by \eqref{assumption_tau_1}}}\nonumber\\
\le & C (\tau^w)^{1/2} \|\delta u\|_{\partial\mathcal{T}_{h}^\star}
\left\||\tau-\frac{1}{2}\bld \beta\cdot\bld n|^{1/2} (w-\mu)\right\|_{\partial\mathcal{T}_{h}^\star}.
%\\
%\le & C \|\delta u\|_{\partial\mathcal{T}_{h}^\star}\left\||\tau-\frac{1}{2}\bld \beta\cdot\bld n|^{1/2} (w-\mu)\right\|_{\partial\mathcal{T}_{h}^\star}
% \quad\quad\quad\qquad\text{ {by \eqref{assumption_tau_1}}}
\end{align*}
%\end{subequations}
Adding up these estimates all together, we obtain
\begin{align*}
& B((\eq,\eu,\euhat),(\boldsymbol{r}, w,\mu)) \\
\le &\;C (\|\epsilon^{-1/2}\bld{\delta q} \|_{\Oh}+ 
\left(\frac{\epsilon}{\tau^{\bld v}}\right)^{1/2}\|\epsilon^{-1/2}\bld{\delta q}\|_{\partial\mathcal{T}_{h}^{s}}+ \|\delta u\|_{\Oh} \nonumber\\
&\quad + h^{1/2} \|\widehat{\delta u}\|_{\dOh}+ (\tau^w)^{1/2}\|\delta u\|_{\partial\mathcal{T}_{h}^{\star}}
) \vertiii{(\bld r, w,\mu)}_e.\nonumber
\end{align*}
Note that $\euhat\in M_{h}(0)$ because $\euhat|_{\partial \Omega}=0$.
Using Lemma \ref{lemma_practical_infsup}, we immediately get 
\begin{align*}
 \vertiii{(\eq, \eu,\euhat)}_e \le & \;C (\|\epsilon^{-1/2}\bld{\delta q} \|_{\Oh}+ 
\left(\frac{\epsilon}{\tau^{\bld v}}\right)^{1/2}\|\epsilon^{-1/2}\bld{\delta q}\|_{\partial\mathcal{T}_{h}^{s}}+ \|\delta u\|_{\Oh} \nonumber\\
&\quad + h^{1/2} \|\widehat{\delta u}\|_{\dOh}+ (\tau^w)^{1/2}\|\delta u\|_{\partial\mathcal{T}_{h}^{\star}}
),
\end{align*}
The approximation properties of the projections gives the following estimates,
\begin{align*}
\|\epsilon^{-1/2}\bld{\delta q} \|_{\Oh} \le &\;C \epsilon^{-1/2} h^{s+1} |\bld q |_{H^{s+1}(\Oh;\mathbb{R}^d)}\\
\left (\frac{\epsilon}{\tau^{\bld v}}\right)^{1/2}\|\epsilon^{-1/2}\bld{\delta q}\|_{\partial\mathcal{T}_{h}^{s}} \le &\; 
C \left (\frac{h}{\tau^{\bld v}}\right)^{1/2}h^{s} |\bld q |_{H^{s+1}(\Oh;\mathbb{R}^d)} \\
 \|\delta u\|_{\Oh} \le &\; C h^{s+1} |u |_{H^{s+1}(\Oh)} \\
h^{1/2} \|\widehat{\delta u}\|_{\dOh}\le &\; C  h^{s+1} |u |_{H^{s+1}(\Oh)}\\
 (\tau^w)^{1/2}\|\delta u\|_{\partial\mathcal{T}_{h}^{\star}} \le &\; C(\tau^w)^{1/2} h^{s+1/2}|u|_{H^{s+1}(\Oh)},
\end{align*}
for all $s\in [0,k]$. 
%, choosing $s = k-1$ in the estimates involving $\bld q$, and $s = k$ in the estimates involving $u$,
Now, using assumption \eqref{assumption_tau_general} on $\tau$, we obtain the following estimates,
\begin{align*}
%\label{project-1}
\vertiii{(\eq, \eu,\euhat)}_e\le &\; C (\epsilon^{-1/2}h^{s_{\bld v}+1} + h^{s_{\bld v}+1/2}) |\bld q |_{H^{s_{\bld v}+1}(\Oh;\mathbb{R}^d)} + 
 C h^{s_{w}+1/2}|u |_{H^{s_{w}+1}(\Oh)},
\end{align*}
for all $s_{\bld v}\in [0,k]$ and $s_w \in [0,k]$.
Using the 
fact that $|\bld q |_{H^{k}(\Oh;\mathbb{R}^d)}= \epsilon |u|_{H^{k+1}(\Oh)}$, we get 
\begin{align}
\label{project-ee}
\vertiii{(\eq, \eu,\euhat)}_e\le &\; C (\epsilon^{1/2}h^{s_{\bld v}+1} + \epsilon h^{s_{\bld v}+1/2}) |u |_{H^{s_{\bld v}+2}(\Oh;\mathbb{R}^d)} + 
 C h^{s_{w}+1/2}|u |_{H^{s_{w}+1}(\Oh)},
\end{align}
for all $s_{\bld v}\in [0,k]$ and $s_w \in [0,k]$. 
Moreover, by approximation properties of the projection, we can easily get
\begin{align}
\label{approx-ee}
 \vertiii{(\bld{\delta q}, \delta u,\widehat{\delta u})}_e\le &\;  C \epsilon^{1/2}h^{s_{\bld v}+1}|u |_{H^{s_{\bld v}+2}(\Oh;\mathbb{R}^d)} + 
 C h^{s_{w}+1/2}|u |_{H^{s_{w}+1}(\Oh)},
\end{align}
for all $s_{\bld v}\in [0,k]$ and $s_w \in [0,k]$. 
Combining \eqref{project-ee}, \eqref{approx-ee} and using the triangle inequality, we  obtain 
\begin{align*}
%\label{est--1}
&  \vertiii{(\bld{q-q}_h, u-u_h,u-\widehat{u}_h)}_e \\
%\nonumber
 \le &\; C (\epsilon^{1/2}h^{s_{\bld v}+1} + \epsilon h^{s_{\bld v}+1/2})
 |u |_{H^{s_{\bld v}+2}(\Oh;\mathbb{R}^d)} + 
 C h^{s_{w}+1/2}|u |_{H^{s_{w}+1}(\Oh)},
\end{align*}
and 
\begin{align*}
%\label{est--2}
  \|u-{u}_h\|_{\Oh}\le &\; \vertiii{(\eq, \eu,\euhat)}_e+\|\delta u\|_{\Oh}\\
\le &\; C(\epsilon^{1/2}h^{s_{\bld v}+1} + \epsilon h^{s_{\bld v}+1/2})|u |_{H^{s_{\bld v}+2}(\Oh;\mathbb{R}^d)} + 
 C h^{s_{w}+1/2}|u |_{H^{s_{w}+1}(\Oh)}.\nonumber
\end{align*}
%Finally, for $k \ge 1$ and $u\in H^{k+1}(\Omega)$, we choose $s_{\bld v} = k-1$ and $s_w = k$, and 
%obtain 
%\begin{align}
%\label{project-1}
%\vertiii{(\bld{q-q}_h, u-u_h,u-\widehat{u}_h)}_e\le &\; C h^k(\epsilon h^{-1/2}+\epsilon^{1/2}+h^{1/2})|u |_{H^{k+1}(\Oh)},\\
% \|u-{u}_h\|_{\Oh} \le&\; C h^k(\epsilon h^{-1/2}+\epsilon^{1/2}+h^{1/2})|u |_{H^{k+1}(\Oh)}.
%\end{align}
This completes the proof of Theorem~\ref{MainTh1}.

For the proof of Theorem~\ref{MainTh2}, everything is exactly the same, except that 
\begin{align*}
(\tau^w)^{1/2}\|\delta u\|_{\partial\mathcal{T}_{h}^{\star}} \le &\; C h^{s+1}|u|_{H^{s+1}(\Oh)},
\end{align*}
because  assumption \eqref{assump_tau_s} ensures $\tau^w \leq \mathcal{O}(h)$.
 Hence we get 
\begin{align*}
 \vertiii{(\eq, \eu,\euhat)}_e\le &\; C (\epsilon^{1/2}h^{s_{\bld v}+1} + \epsilon h^{s_{\bld v}+1/2})|u |_{H^{s_{\bld v}+2}(\Oh;\mathbb{R}^d)} + 
 C h^{s_{w}+1}|u |_{H^{s_{w}+1}(\Oh)}\\
\|u-{u}_h\|_{\Oh}\le &\; \vertiii{(\eq, \eu,\euhat)}_e+\|\delta u\|_{\Oh}\\
\le &\; C (\epsilon^{1/2}h^{s_{\bld v}+1} + \epsilon h^{s_{\bld v}+1/2})|u |_{H^{s_{\bld v}+2}(\Oh;\mathbb{R}^d)} + 
 C h^{s_{w}+1}|u |_{H^{s_{w}+1}(\Oh)}.
\end{align*}
%So if $\epsilon = \mathcal{O}(h^2)$, $k\ge 1$ and $u\in H^{k+1}(\Omega)$, choosing $s_{\bld v}=k-1, s_w = k$, we get 
%\[
% \|u-u_h\|_{\Oh} = C h^{k+1}|u|_{H^{k+1}(\Oh)}.
%\]
This completes the proof of Theorem~\ref{MainTh2}.
%\end{proof}

%\begin{remark}
%If we replace $\boldsymbol{V}_{h}$ by $\{\boldsymbol{r}\in L^{2}(\Omega;\mathbb{R}^{d}):\boldsymbol{r}|_{K}\in P_{k}(K;\mathbb{R}^{d}) +\bld x P_k(K)
%\quad \forall K\in \mathcal{T}_{h}\}$, which is used in \cite{Egger2010}, 
%then we can use standard Raviart-Thomas projection $\boldsymbol{\Pi}_{h}^{RT}$ 
%in (\ref{projection_error2}). Thus, the term (\ref{difficult_term}) vanishes. Consequently, for $k \ge 1$ and $u\in H^{k+1}(\Omega)$, we have 
%the following estimate
%\begin{align}
%\label{project-rt}
%\vertiii{(\bld{q-q}_h, u-u_h,u-\widehat{u}_h)}_e\le &\; C h^k(\epsilon^{1/2}+h^{1/2})|u |_{H^{k+1}(\Oh)},\\
% \|u-{u}_h\|_{\Oh} \le&\; C h^k(\epsilon^{1/2}+h^{1/2})|u |_{H^{k+1}(\Oh)}.
%\end{align}
%\end{remark}

\section{Numerical results}\label{sec:num}

In this section, we present numerical studies using simple model problems in 
2D to verify our theoretical results and  display the performance of the HDG methods when the exact solution exibit layers. 
Our test problems are similar to those studied in \cite{AyusoMarini:cdf}.
We fix the domain to be the unit square in all the experiments, and run simulations of  the HDG methods \eqref{cd_hdg_eqs} with the following three choices of approximation spaces and 
stabilization functions:

\begin{itemize}
 \item [1.] The approximation spaces are  
%\begin{subequations}
\begin{align*}
\boldsymbol{V}_{h}&=\{\boldsymbol{r}\in L^{2}(\Omega;\mathbb{R}^{d}):\boldsymbol{r}|_{K}\in P_{k}(K;\mathbb{R}^{d})
\quad \forall K\in \mathcal{T}_{h}\},\\
W_{h}&=\{w\in L^{2}(\Omega):w|_{K}\in P_{k}(K)
\quad \forall K\in \mathcal{T}_{h}\},\\
M_{h}&=\{\mu\in L^{2}(\mathcal{E}_{h}):\mu |_{F} \in P_{k}(F)
\quad \forall F\in \mathcal{E}_{h}\},\\
\end{align*}
%\end{subequations}
while the stabilization function is given by
\begin{align*}
 \tau(F) = \max(\sup_{\bld x \in F} \,\bld \beta(\bld x)\cdot \bld n,0),\quad \forall F\in \dK, \forall K\in \Oh.
\end{align*}
We denote this choice as $P_k$-$HDG1$.
\item [2.] The approximation spaces are the same as in the previous case, while the stabilization function is given by 
\begin{align*}
\tau(F) = \max(\sup_{\bld x \in F} \,\bld \beta(\bld x)\cdot \bld n,0) + \min (0.1 \frac{\epsilon}{h_F}, 1 ),\quad \forall F\in \dK, \forall K\in \Oh.
\end{align*}
We denote this choice as $P_k$-$HDG2$.
\item [3.] The approximation spaces are  given as follows:
%\begin{subequations}
\begin{align*}
\boldsymbol{V}_{h}&=\{\boldsymbol{r}\in L^{2}(\Omega;\mathbb{R}^{d}):\boldsymbol{r}|_{K}\in P_{k}(K;\mathbb{R}^{d}) +\bld x P_k(K)
\quad \forall K\in \mathcal{T}_{h}\},\\
W_{h}&=\{w\in L^{2}(\Omega):w|_{K}\in P_{k}(K)
\quad \forall K\in \mathcal{T}_{h}\},\\
M_{h}&=\{\mu\in L^{2}(\mathcal{E}_{h}):\mu |_{F} \in P_{k}(F)
\quad \forall F\in \mathcal{E}_{h}\},\\
\end{align*}
%\end{subequations}
while the sabilization function is the same as $P_k$-$HDG1$.
We denote this method as $P_k$-$HDG3$.
\end{itemize}
We remark that the method $P_k$-$HDG3$ is exactly the MH--DG method considered in \cite{Egger2010} when $\bld \beta$ is piecewise-constant, 
which is proven in Appendix~\ref{appendix-1}.
%For a detailed comparison of the HDG method and the streamline diffusion method \cite{BrooksHughes} we refer to \cite[Section 5]{Egger2010} and 
%the reference therein. 

%We also remark that we present the history of convergence in the first and forth tests only in terms of $L^2$-- error on $u_h$ not only for the reason that it
%error comes natually from our analysis, see Theorem~\ref{MainTh1} and Theorem~\ref{MainTh2}, but also for the reason that it is a lot easier to implement than the stronger norm 
%$\vertiii{ \cdot}_e$ defined
%in \eqref{norm_ee}.

\subsection{A smooth solution test}
\label{smooth}
We take the velocity field $\boldsymbol{\beta} = [1, 2]^T$, 
and the diffusion coefficient $\epsilon$ as $1, 10^{-3}, 10^{-9}$. The source term $f$ is chosen so that
the exact solution is $ u(x,y) = \sin(2\pi\, x)\sin(2\pi\,y)$. 
We obtain the computational meshes by uniform refinement of a mesh that consists of 
a structured $5\times 5\times 2$ triangular elements, where the slanted edges are pointing in 
the northeast direction.

Let us remark that when $\epsilon = 1$ and $k\ge 1$, %where $k$ is the polynomial degree of the approximation spaces, 
we can follow \cite{ChenCockburnHDGI} to use superconvergence results to locally postprocess the solution 
to get a new approximation of the scalar variable $u_h^\star$, 
which converges faster than $u_h$. 
Here the definition of $u_h^\star\in P_{k+1}(K)$ for each element $K\in \Oh$ is as follows:
% \begin{subequations}
% \label{postprocessing}
 \begin{align*}
%  (c^{-1}\,\nabla u_h^\star,\nabla w)_K =&\; (f,w)_K-\bint{\bld q_h\cdot n}{w}{\dK}&&\quad \text{ for all } 
%  w\in \pol{k+1}{K}^0,\\
%\label{postu1}
  (\nabla u_h^\star,\nabla w)_K =&\; - (\epsilon^{-1}\,\bld q_h,\nabla w)_K&&\quad \text{ for all } 
  w\in P_{k+1}(K),\\
%\label{postu2}
 (u_h^\star,1)_K = &\; (u_h,1)_K.
 \end{align*}
%\end{subequations}
Table~\ref{smooth_test1} and Table~\ref{smooth_test_post} show the $L^2$--convergence 
results for $u_h$ and $u_h^\star$ for the three HDG methods when $\epsilon = 1$. 
For all the methods,  we observe convergence order of $k+1$ for $u_h$ and convergence order of $k+2$ for 
$u_h^\star$ (when $k\ge 1$). Also note that
the errors for the postprocessing $u_h^\star$  for the three methods are very close to each other.

When $\epsilon \ll 1 $, there is no superconvergence result for the HDG methods. Hence we only  show 
$\|u-u_h\|_\Oh$ in Table~\ref{smooth_test2} 
when $\epsilon = 10^{-3}$ and $\epsilon = 10^{-9}$. Again, optimal converge rates are recovered; better than the one predicted by the theoretical 
result in Theorem~\ref{MainTh1} which predict 
the loss of half order of accuracy. Moreover, our numerical results in Table \ref{smooth_test2} show
that the performance of  these three HDG methods are
equally good. 
Hence, we prefer to use $P_k$-$HDG1$ and $P_k$-$HDG2$ rather than $P_k$-$HDG3$ because $P_k$-$HDG3$ has more degrees of freedom for the local problems.

\begin{table}[htbp]
\footnotesize
  \begin{center}
\scalebox{0.9}{%
    $\begin{array}{|c|c||c c|c c|c c|}
    \hline
 \phantom{\big|} \mbox{degree} & \mbox{mesh} & \multicolumn{6}{|c|}{\epsilon=1\phantom{\big|}} 
\\
\cline{3-8}
 \phantom{\big|}  k& h^{-1} & \mbox{error}  & \mbox{order} & \mbox{error}  & \mbox{order} & \mbox{error}  & \mbox{order} \\
\hline
& & \multicolumn{2}{|c}{HDG1} &  \multicolumn{2}{|c}{HDG2}&
 \multicolumn{2}{|c|}{HDG3} 
\\
\hline
  &  5  &  1.74\mbox{e-}0  &  --     &  7.60\mbox{e-}1  &  --    &  2.06\mbox{e-}1  &  --    \\
  & 10  &  9.41\mbox{e-}1  &  0.88   &  3.33\mbox{e-}1  &  1.20  &  1.06\mbox{e-}1  &  0.97 \\
 0& 20  &  4.83\mbox{e-}1  &  0.96   &  1.72\mbox{e-}1  &  0.95  &  5.29\mbox{e-}2  &  1.00 \\
  & 40  &  2.44\mbox{e-}1  &  0.99   &  8.71\mbox{e-}2  &  0.98  &  2.64\mbox{e-}2  &  1.00 \\
\hline
  &  5  &  3.75\mbox{e-}1  &  --     &  1.72\mbox{e-}1  &  --    &  4.88\mbox{e-}2  &  --    \\
  & 10  &  1.01\mbox{e-}1  &  1.89   &  3.88\mbox{e-}2  &  2.15  &  1.26\mbox{e-}2  &  1.95 \\
 1& 20  &  2.59\mbox{e-}2  &  1.97   &  9.96\mbox{e-}3  &  1.96  &  3.18\mbox{e-}3  &  1.99  \\
  & 40  &  6.52\mbox{e-}3  &  1.99   &  2.51\mbox{e-}3  &  1.99  &  7.96\mbox{e-}4  &  2.00  \\
\hline
  &  5  &  6.19\mbox{e-}2  &  --     &  2.88\mbox{e-}2  &  --    &  8.60\mbox{e-}3  &  --   \\
  & 10  &  8.26\mbox{e-}3  &  2.90   &  3.20\mbox{e-}3  &  3.16  &  1.12\mbox{e-}3  &  2.95 \\
 2& 20  &  1.05\mbox{e-}3  &  2.97   &  4.09\mbox{e-}4  &  2.97  &  1.41\mbox{e-}4  &  2.99  \\
  & 40  &  1.33\mbox{e-}4  &  2.99   &  5.16\mbox{e-}5  &  2.99  &  1.77\mbox{e-}5  &  3.00  \\
\hline
  &  5  &  8.35\mbox{e-}3  &  --     &  3.90\mbox{e-}3  &  --    &  1.21\mbox{e-}3  &  --    \\
  & 10  &  5.53\mbox{e-}4  &  3.92   &  2.16\mbox{e-}4  &  4.18  &  7.81\mbox{e-}5  &  3.95 \\
 3& 20  &  3.52\mbox{e-}5  &  3.98   &  1.37\mbox{e-}5  &  3.97  &  4.92\mbox{e-}6  &  3.99  \\
  & 40  &  2.21\mbox{e-}6  &  3.99   &  8.64\mbox{e-}7  &  3.99  &  3.08\mbox{e-}7  &  4.00  \\
 \hline
 \end{array} $
}
\end{center}{$\phantom{|}$}
     \caption{History of convergence for $\|u - u_h\|_{L^2(\mathcal{T}_h)}$ when $\epsilon = 1$.}
   \label{smooth_test1}
\end{table}

\begin{table}[htbp]
\footnotesize
  \begin{center}
\scalebox{0.9}{%
    $\begin{array}{|c|c||c c|c c|c c|}
    \hline
 \phantom{\big|} \mbox{degree} & \mbox{mesh} & \multicolumn{6}{|c|}{\epsilon=1\phantom{\big|}} 
\\
\cline{3-8}
 \phantom{\big|}  k& h^{-1} & \mbox{error}  & \mbox{order} & \mbox{error}  & \mbox{order} & \mbox{error}  & \mbox{order} \\
\hline
& & \multicolumn{2}{|c}{HDG1} &  \multicolumn{2}{|c}{HDG2}&
 \multicolumn{2}{|c|}{HDG3} 
\\
\hline
  &  5  &  2.25\mbox{e-}2  &  --     &  1.70\mbox{e-}2  &  --    &  1.39\mbox{e-}2  &  --    \\
  & 10  &  3.08\mbox{e-}3  &  2.87   &  2.14\mbox{e-}3  &  2.99  &  1.70\mbox{e-}3  &  3.04 \\
 1& 20  &  3.94\mbox{e-}4  &  2.96   &  2.65\mbox{e-}4  &  3.02  &  2.08\mbox{e-}4  &  3.03  \\
  & 40  &  4.96\mbox{e-}5  &  2.99   &  3.28\mbox{e-}5  &  3.01  &  2.56\mbox{e-}5  &  3.02  \\
\hline
  &  5  &  2.49\mbox{e-}3  &  --     &  2.13\mbox{e-}3  &  --    &  1.92\mbox{e-}3  &  --   \\
  & 10  &  1.59\mbox{e-}4  &  3.97   &  1.35\mbox{e-}4  &  3.98  &  1.23\mbox{e-}4  &  3.97 \\
 2& 20  &  9.95\mbox{e-}6  &  4.00   &  8.45\mbox{e-}6  &  4.00  &  7.71\mbox{e-}6  &  3.99  \\
  & 40  &  6.22\mbox{e-}7  &  4.00   &  5.28\mbox{e-}7  &  4.00  &  4.82\mbox{e-}7  &  4.00  \\
\hline
  &  5  &  2.78\mbox{e-}4  &  --     &  2.43\mbox{e-}4  &  --    &  2.20\mbox{e-}4  &  --    \\
  & 10  &  8.87\mbox{e-}6  &  4.97   &  7.68\mbox{e-}6  &  4.98  &  6.94\mbox{e-}6  &  4.99 \\
 3& 20  &  2.78\mbox{e-}7  &  4.99   &  2.40\mbox{e-}7  &  5.00  &  2.17\mbox{e-}7  &  5.00  \\
  & 40  &  8.70\mbox{e-}9  &  5.00   &  7.50\mbox{e-}9  &  5.00  &  6.77\mbox{e-}9  &  5.00  \\
 \hline
 \end{array} $
}
\end{center}{$\phantom{|}$}
     \caption{History of convergence for $\|u - u_h^\star\|_{L^2(\mathcal{T}_h)}$ when $\epsilon = 1$.}
   \label{smooth_test_post}
\end{table}

\begin{table}[htbp]
\footnotesize
  \begin{center}
\scalebox{0.9}{%
    $\begin{array}{|c|c||c c|c c|c c||c c|c c|c c|}
    \hline
 \phantom{\big|} \mbox{degree} & \mbox{mesh} & \multicolumn{6}{|c||}{\epsilon=10^{-3}\phantom{\big|}} &\multicolumn{6}{|c|}{
\epsilon=10^{-9}\phantom{\big|}}
\\
\cline{3-14}
 \phantom{\big|}  k& h^{-1} & \mbox{error}  & \mbox{order} & \mbox{error}  & \mbox{order} & \mbox{error}  & \mbox{order} & \mbox{error}  & \mbox{order}
& \mbox{error}  & \mbox{order} & \mbox{error}  & \mbox{order} \\
\hline
& & \multicolumn{2}{|c|}{HDG1} &  \multicolumn{2}{|c}{HDG2}&
 \multicolumn{2}{|c||}{HDG3} &  \multicolumn{2}{|c}{HDG1}& \multicolumn{2}{|c}{HDG2} &  \multicolumn{2}{|c|}{HDG3}
\\
\hline
  &  5  &  3.16\mbox{e-}1  &  --     &  3.16\mbox{e-}1  &  --    &  3.14\mbox{e-}1  &  --   &  3.18\mbox{e-}1  &  --   &  3.18\mbox{e-}1  &  --   &  3.18\mbox{e-}1  &  --   \\
  & 10  &  1.71\mbox{e-}1  &  0.88   &  1.71\mbox{e-}1  &  0.88  &  1.69\mbox{e-}1  &  0.89 &  1.74\mbox{e-}1  &  0.87 &  1.74\mbox{e-}1  &  0.87 &  1.74\mbox{e-}1  &  0.87 \\
 0& 20  &  8.78\mbox{e-}2  &  0.96   &  8.78\mbox{e-}2  &  0.96  &  8.60\mbox{e-}2  &  0.98 &  9.06\mbox{e-}2  &  0.94 &  9.06\mbox{e-}2  &  0.94 &  9.06\mbox{e-}2  &  0.94 \\
  & 40  &  4.37\mbox{e-}2  &  1.00   &  4.38\mbox{e-}2  &  1.00  &  4.22\mbox{e-}2  &  1.03 &  4.63\mbox{e-}2  &  0.97 &  4.63\mbox{e-}2  &  0.97 &  4.63\mbox{e-}2  &  0.97 \\
\hline
  &  5  &  7.84\mbox{e-}2  &  --     &  7.84\mbox{e-}2  &  --  &  7.75\mbox{e-}2  &  --     &  7.96\mbox{e-}2  &  --   &  7.96\mbox{e-}2  &  --   &  7.96\mbox{e-}2  &  --   \\
  & 10  &  2.00\mbox{e-}2  &  1.97   &  2.00\mbox{e-}2  &  1.97 &  1.95\mbox{e-}2  &  1.99  &  2.04\mbox{e-}2  &  1.85 &  2.04\mbox{e-}2  &  1.85 &  2.04\mbox{e-}2  &  1.85 \\
 1& 20  &  4.95\mbox{e-}3  &  2.01   &  4.95\mbox{e-}3  &  2.01 &  4.73\mbox{e-}3  &  2.05  &  5.13\mbox{e-}3  &  1.96 &  5.13\mbox{e-}3  &  1.96 &  5.13\mbox{e-}3  &  1.96 \\
  & 40  &  1.21\mbox{e-}3  &  2.03   &  1.21\mbox{e-}3  &  2.03 &  1.11\mbox{e-}3  &  2.09  &  1.28\mbox{e-}3  &  1.99 &  1.28\mbox{e-}3  &  1.99 &  1.28\mbox{e-}3  &  1.99 \\
\hline
  &  5  &  1.32\mbox{e-}2  &  --     &  1.32\mbox{e-}2  &  --   &  1.31\mbox{e-}2  &  --   &  1.35\mbox{e-}2  &  --   &  1.35\mbox{e-}2  &  --  &  1.35\mbox{e-}2  &  --   \\
  & 10  &  1.72\mbox{e-}3  &  2.95   &  1.72\mbox{e-}3  &  2.95 &  1.68\mbox{e-}3  &  2.96 &  1.77\mbox{e-}3  &  2.93 &  1.77\mbox{e-}3  &  2.93 &  1.77\mbox{e-}3  &  2.93  \\
 2& 20  &  2.14\mbox{e-}4  &  3.00   &  2.14\mbox{e-}4  &  3.00 &  2.05\mbox{e-}4  &  3.03 &  2.24\mbox{e-}4  &  2.98 &  2.24\mbox{e-}4  &  2.98 &  2.24\mbox{e-}4  &  2.98  \\
  & 40  &  2.63\mbox{e-}5  &  3.02   &  2.63\mbox{e-}5  &  3.02 &  2.45\mbox{e-}5  &  3.07 &  2.80\mbox{e-}5  &  3.00 &  2.80\mbox{e-}5  &  3.00&  2.80\mbox{e-}5  &  3.00\\
\hline
  &  5  &  1.83\mbox{e-}3  &  --     &  1.83\mbox{e-}3  &  --   &  1.80\mbox{e-}3  &  --    &  1.87\mbox{e-}3  &  --   &  1.87\mbox{e-}3  &  --  &  1.87\mbox{e-}3  &  --   \\
  & 10  &  1.17\mbox{e-}4  &  3.97   &  1.17\mbox{e-}4  &  3.97 &  1.13\mbox{e-}4  &  3.99  &  1.20\mbox{e-}4  &  3.95 &  1.20\mbox{e-}4  &  3.95 &  1.20\mbox{e-}4  &  3.95  \\
 3& 20  &  7.23\mbox{e-}6  &  4.01   &  7.23\mbox{e-}6  &  4.01 &  6.82\mbox{e-}6  &  4.05  &  7.56\mbox{e-}6  &  3.99 &  7.56\mbox{e-}6  &  3.99&  7.56\mbox{e-}6  &  3.99\\
  & 40  &  4.43\mbox{e-}7  &  4.03   &  4.43\mbox{e-}7  &  4.03 &  4.01\mbox{e-}7  &  4.09  &  4.73\mbox{e-}7  &  4.00 &  4.73\mbox{e-}7  &  4.00 &  4.73\mbox{e-}7  &  4.00\\
\hline
 \end{array} $
}
\end{center}{$\phantom{|}$}
     \caption{History of convergence for $\|u - u_h\|_{L^2(\mathcal{T}_h)}$ when $\epsilon = 10^{-3}$ and $\epsilon = 10^{-9}$.}
   \label{smooth_test2}
\end{table}

\subsection{A rotating flow test}
We take $\epsilon = 10^{-6}$, $\boldsymbol{\beta} = [y - 1/2, 1/2 - x]^T$, and  $f = 0$. The solution $u$ is prescribed along the slip $1/2\times [0, 1/2]$, as follows:
\[
 u(1/2,y) = \sin^2(2\pi\,y)\qquad y \in [0, 1/2]. 
\]
See \cite{HughesScovazziBochevBuffa2006} for a detailed description of this test.

In Fig.~\ref{rotating1}, we plot $u_h$ obtained from the three HDG methods for various polynomial degrees in a structured triangular grid of 128 elements. 
To better compare the results, we plot in Fig.~\ref{rotating3} extracted data of $u_h$ 
along the horizontal center line $y = 1/2$. We also plot in Fig.~\ref{rotating_high} a comparison of $P_0$-$HDG1$ in 8192 elements and  $P_3$-$HDG1$ in 128 elements.
From Fig.~\ref{rotating1}, we find that all the HDG methods produce similar results. Moreover, it is clear that higher order methods
lead to better approximation results and 
are computationally cheaper than lower order methods for qualitively similar numerical results. 

\begin{figure}[htbp]
\centering
\includegraphics[width = 0.30\textwidth]{./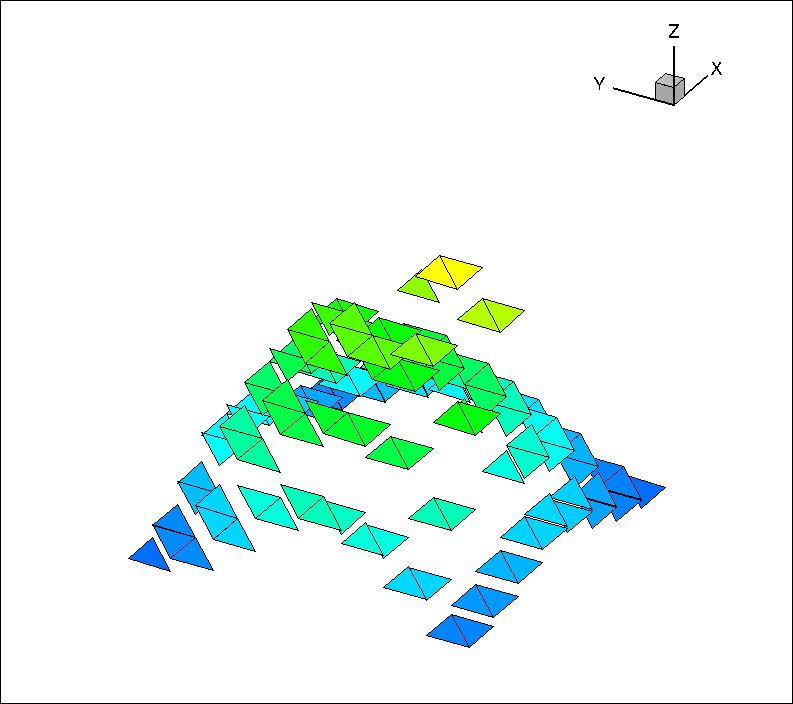}
\includegraphics[width = 0.30\textwidth]{./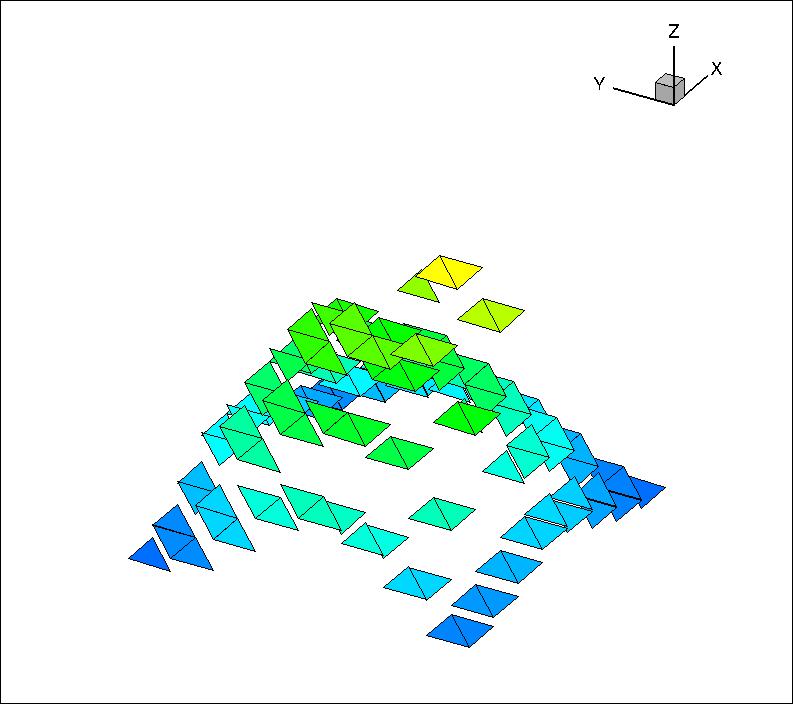}
\includegraphics[width = 0.30\textwidth]{./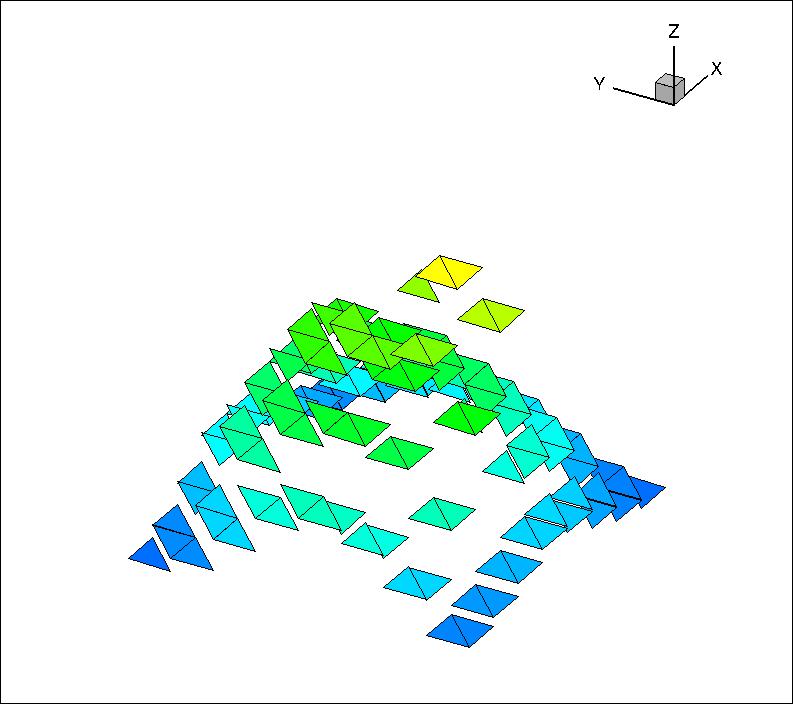}
\includegraphics[width = 0.30\textwidth]{./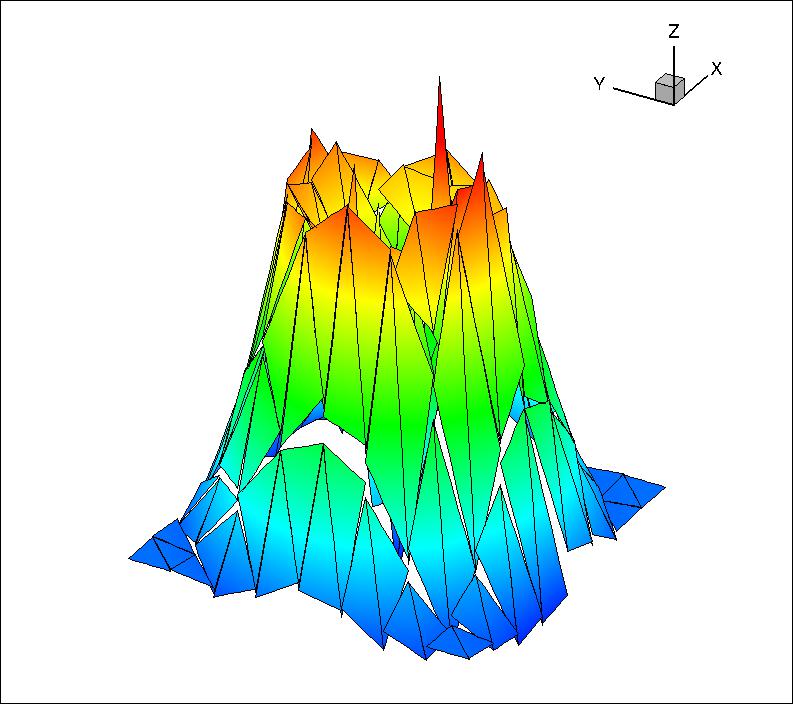}
\includegraphics[width = 0.30\textwidth]{./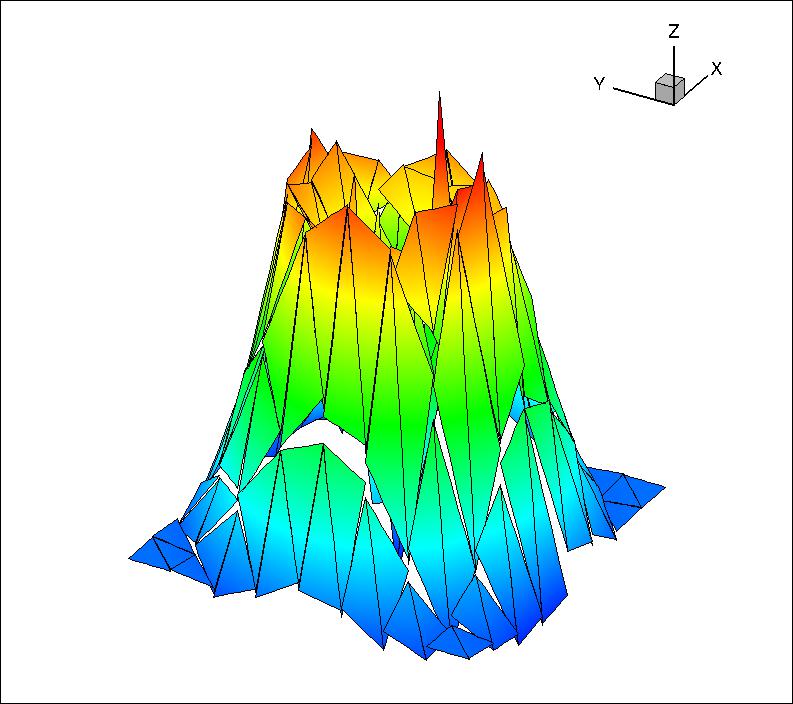}
\includegraphics[width = 0.30\textwidth]{./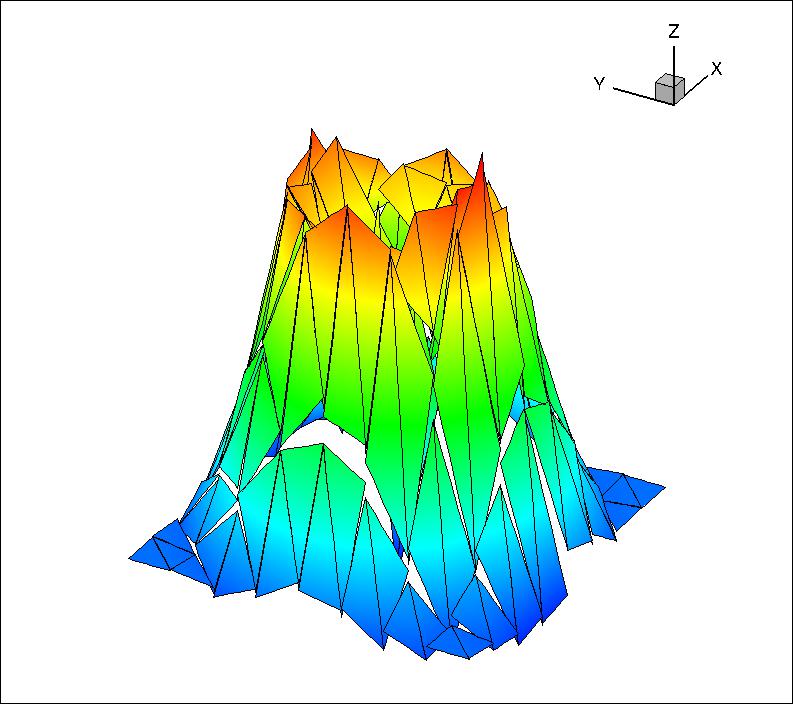}
\includegraphics[width = 0.30\textwidth]{./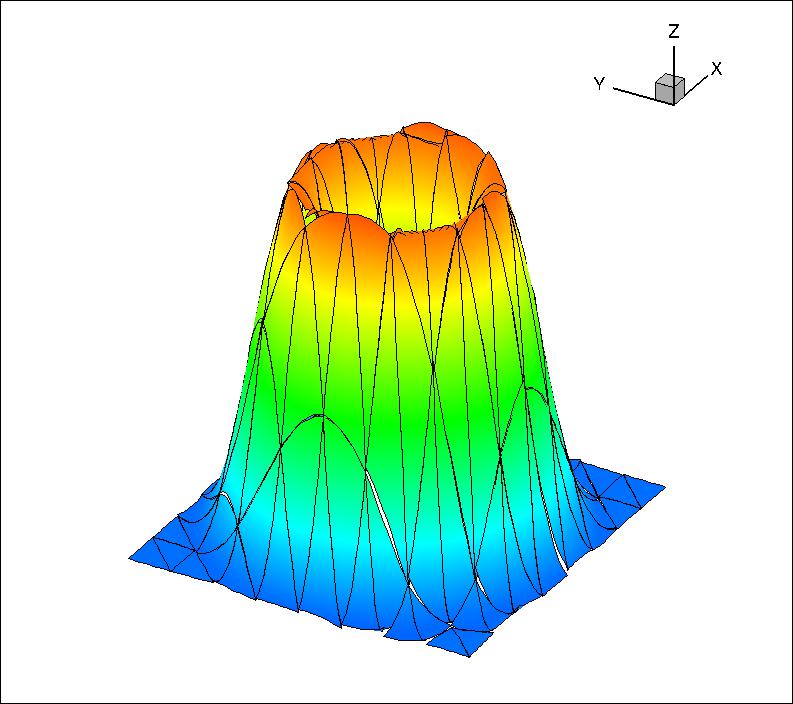}
\includegraphics[width = 0.30\textwidth]{./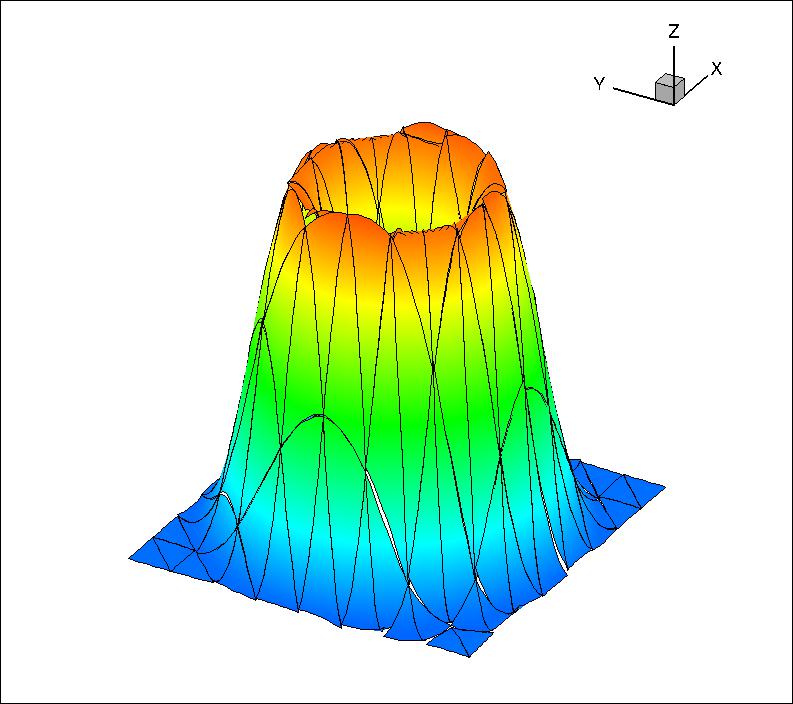}
\includegraphics[width = 0.30\textwidth]{./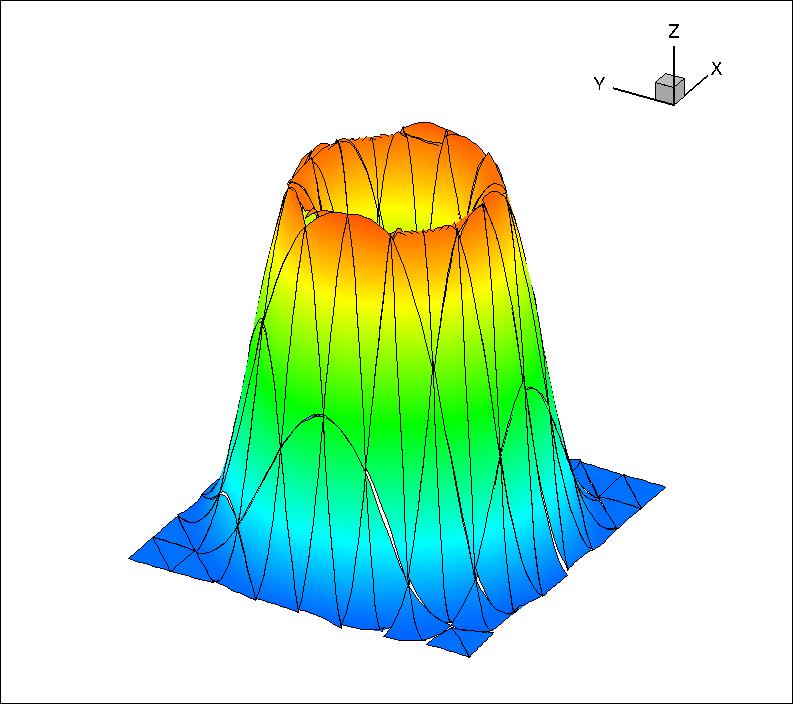}
\includegraphics[width = 0.30\textwidth]{./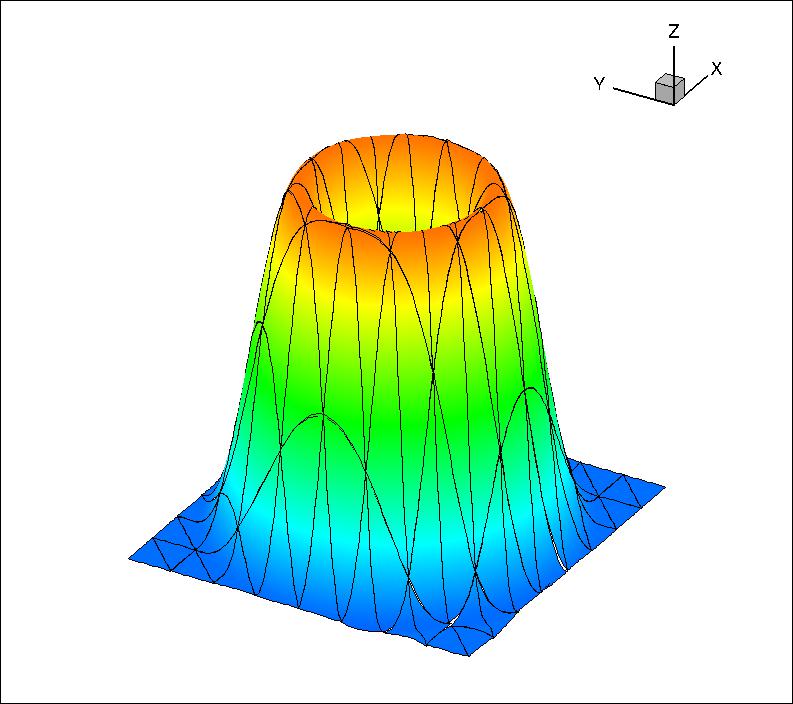}
\includegraphics[width = 0.30\textwidth]{./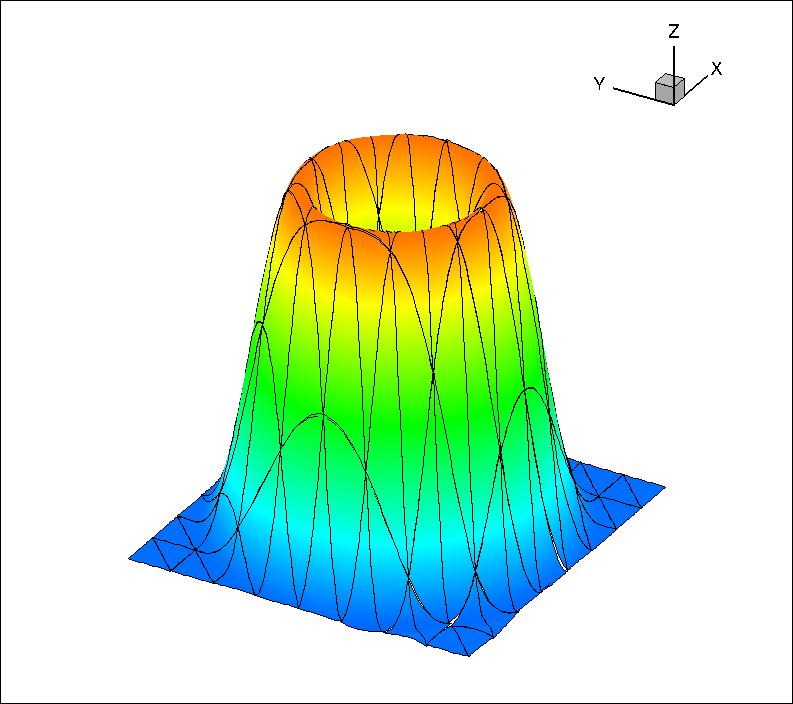}
\includegraphics[width = 0.30\textwidth]{./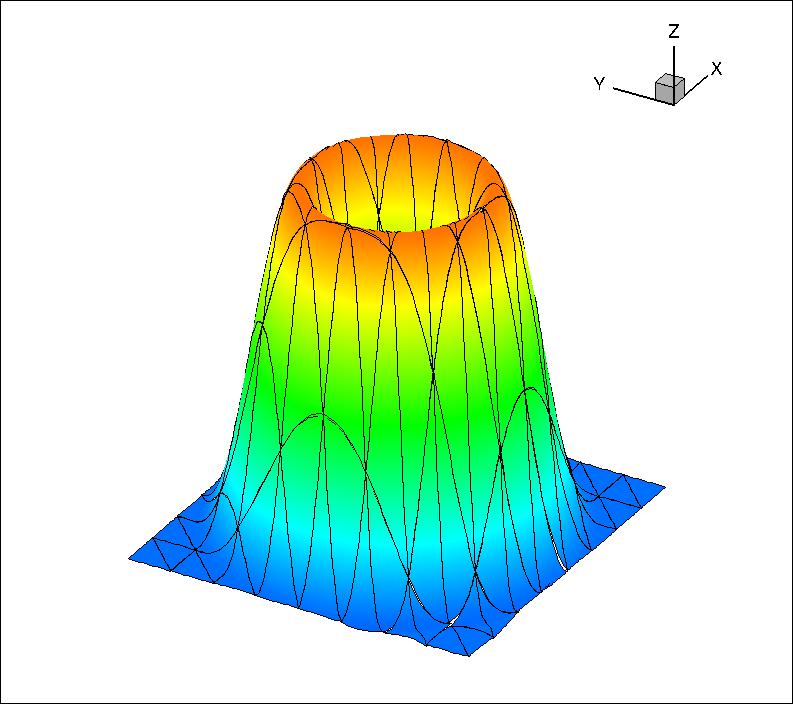}
\caption{3D plot of $u_h$ for rotating flow test with $\epsilon = 10^{-6}$ in 128 elements. Left--Right: $HDG1$, $HDG2$, $HDG3$.
Top--Bottom: $P_0$--$P_3$.}
\label{rotating1} 
\end{figure}

\begin{figure}[htbp]
\centering
\includegraphics[width = 0.75\textwidth]{./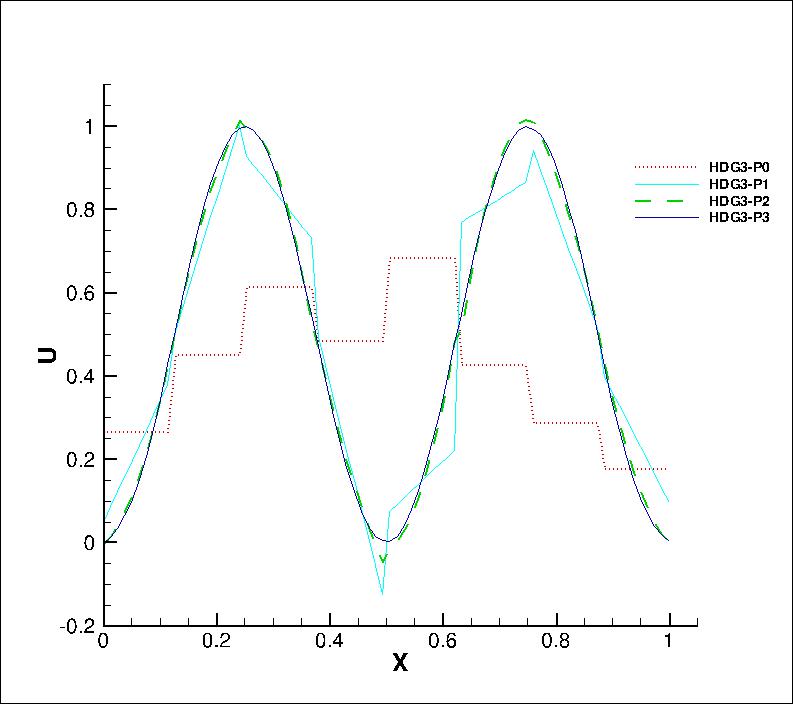}
\caption{$u_h$ along $y = 1/2$ for $P_k$-$HDG3$ with 128 elements.}
\label{rotating3} 
\end{figure}

\begin{figure}[htbp]
\centering
\includegraphics[width = 0.45\textwidth]{./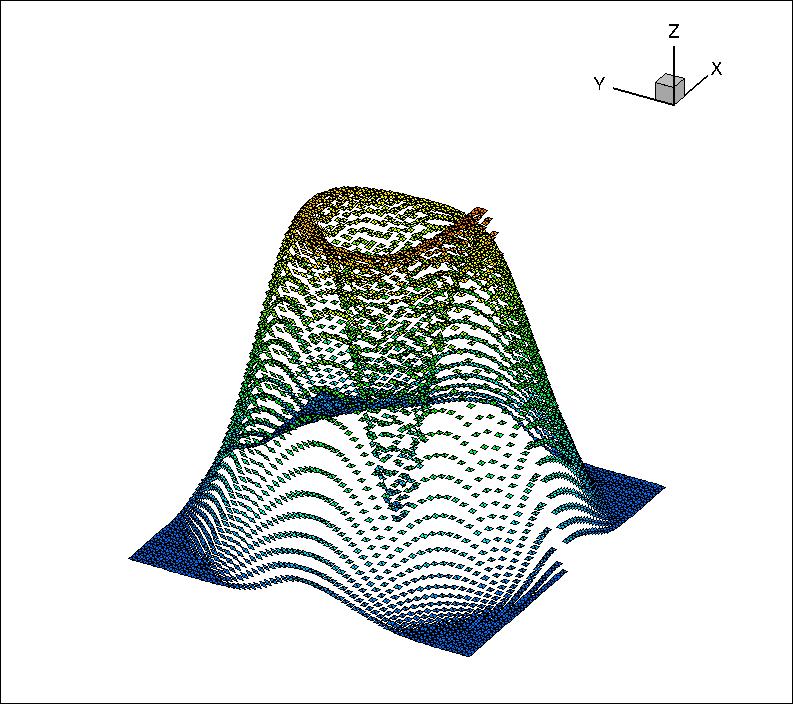}
\includegraphics[width = 0.45\textwidth]{./pic/rr/HDG1_P3.jpg}
\includegraphics[width = 0.45\textwidth]{./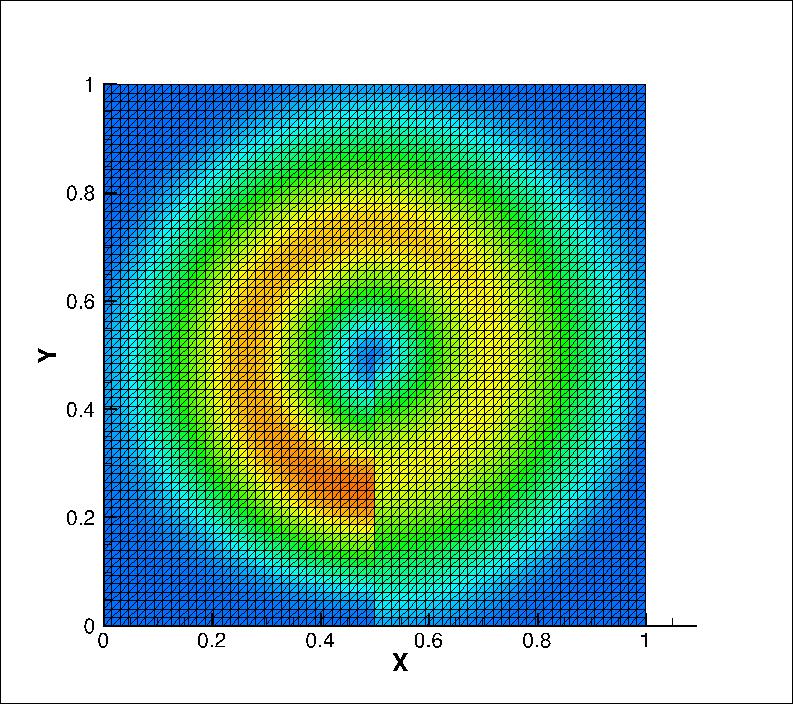}
\includegraphics[width = 0.45\textwidth]{./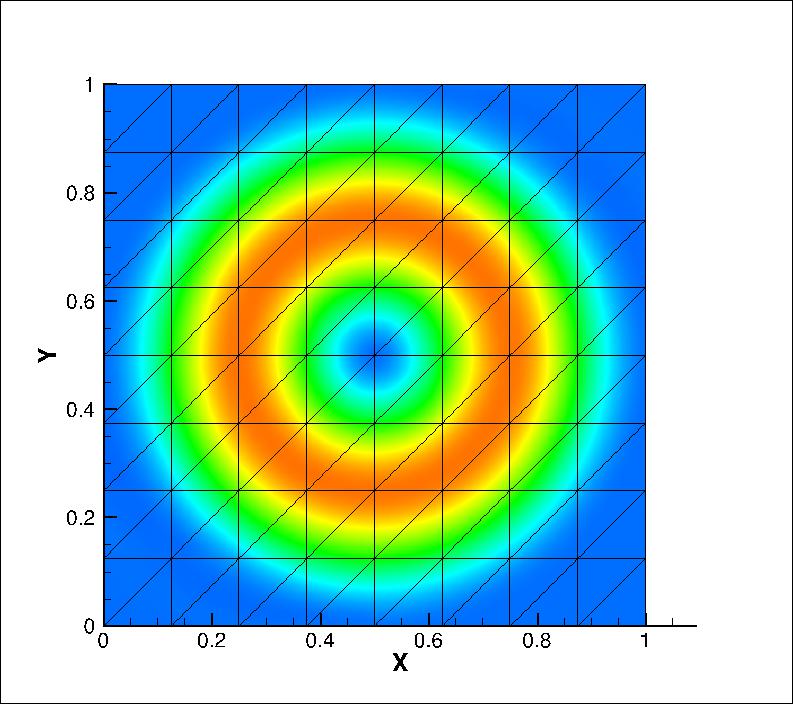}
\caption{A comparison of $P_0$-$HDG$ and $P_3$-$HDG$. Left: $P_0$-$HDG1$ with $8192$ elements; Right $P_3$-$HDG1$
with $128$ elements. Top: 3D plot; Bottom: 2D contour.}
\label{rotating_high} 
\end{figure}

\subsection{An interior layer test}
We take $\boldsymbol{\beta} = [1/2, \sqrt{3}/2]^T$, $f = 0$, and  the Dirichlet boundary condition as follows:
\begin{align*}
 u = \left\{\begin{tabular}{l l}
            $1$& { on }$\{y = 0, 0\le x\le 1\}$,\\
            $1$& { on }$\{x = 0, 0\le y\le 1/5\}$,\\
            $0$& { elsewhere. }
           \end{tabular}
\right.
\end{align*}
It is clear that for $\epsilon$ small, the exact solution produces an interior layer along $\bld \beta$ direction starting from $(0,1/5)$, and boundary 
layers on the right and top right boundary.

In Fig.~\ref{interior31} and Fig.~\ref{interior91},
we plot the computational results in a structured triangular grid of 128 elements for $\epsilon = 10^{-3}$ and $\epsilon = 10^{-9}$
respectively. 
 In order to better see the performance of the HDG method in capturing interior layers, 
in Fig.~\ref{interior_contour},
we plot the contour of $u_h$ using $P_k$-$HDG1$ with $ 0\le k \le 3$ for $\epsilon = 10^{-3}$ in three consecutive meshes with the coarsest one consists of 200 elements. 
Again,  all the HDG methods produce quite similar results. 
Note that, as expected, the piecewise-constant approximations are free of oscillations but extensively smear out the interior layer, while, on the 
other hand, higher order approximations capture the interior layer within a few elements but produce oscillations within the layer. 

%We have a huge improvement in the approximation of the layers
%from $P_0$ to $P_1$, while from $P_1$ to $P_3$, the numerical results look quite similar.

\begin{figure}[htbp]
\centering
\includegraphics[width = 0.3\textwidth]{./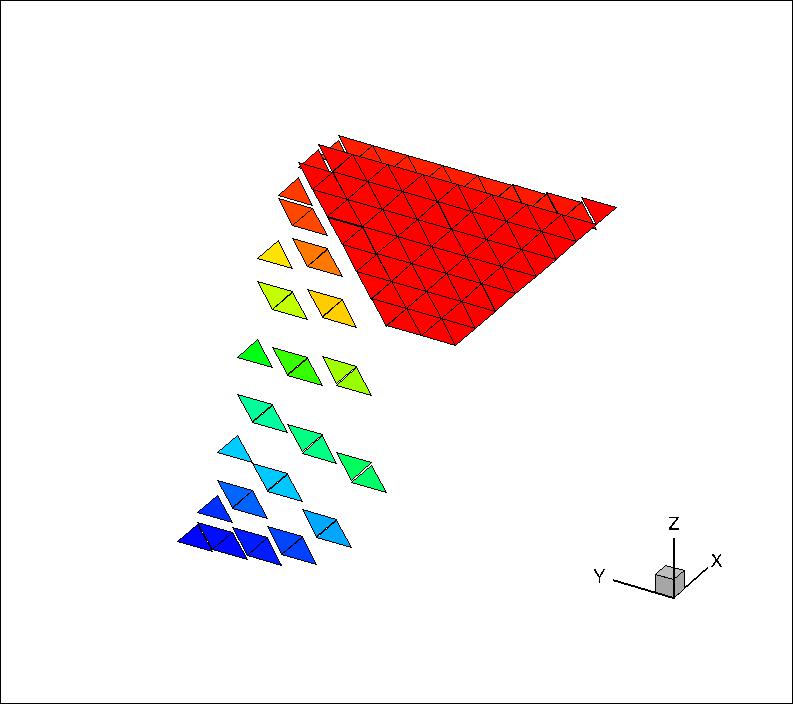}
\includegraphics[width = 0.3\textwidth]{./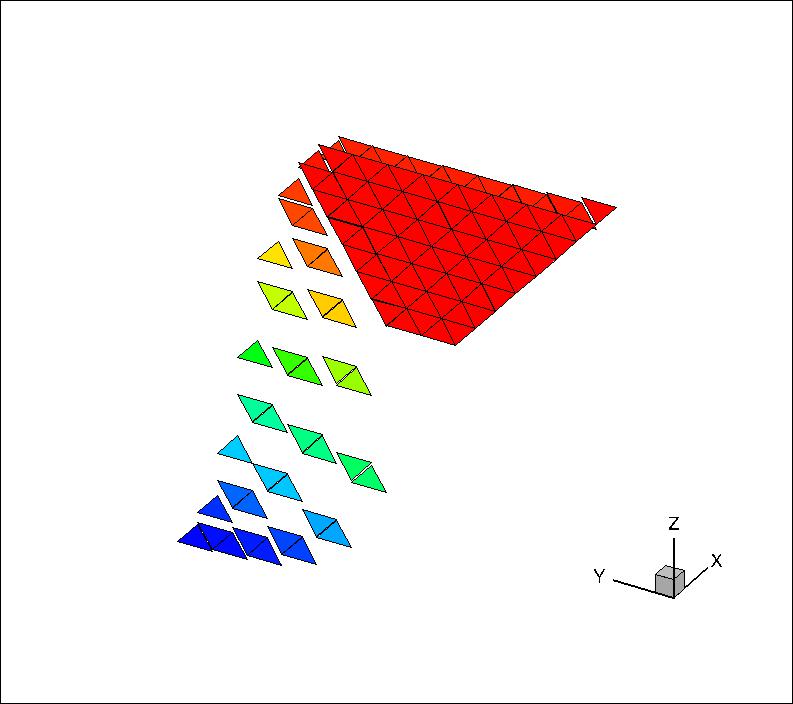}
\includegraphics[width = 0.3\textwidth]{./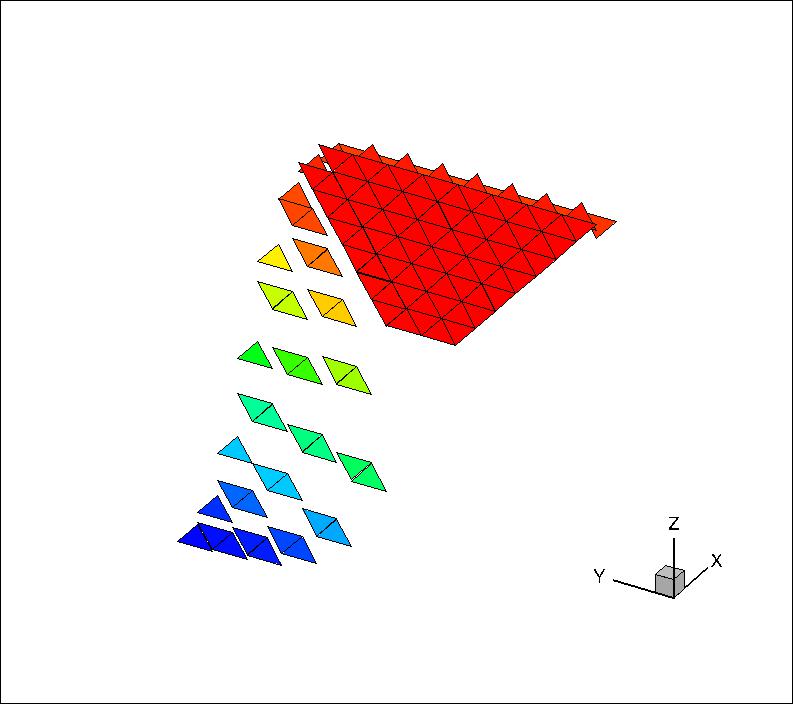}
\includegraphics[width = 0.3\textwidth]{./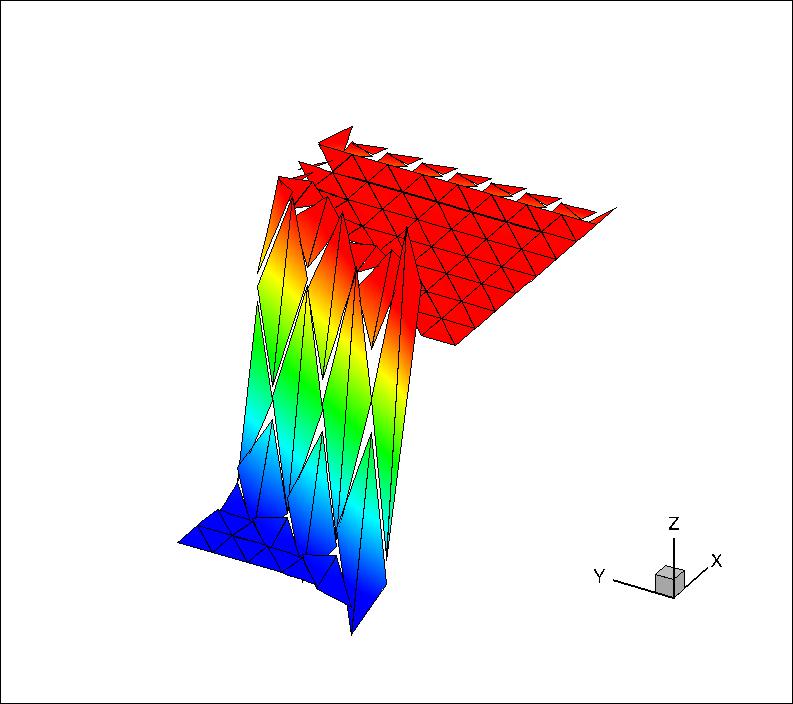}
\includegraphics[width = 0.3\textwidth]{./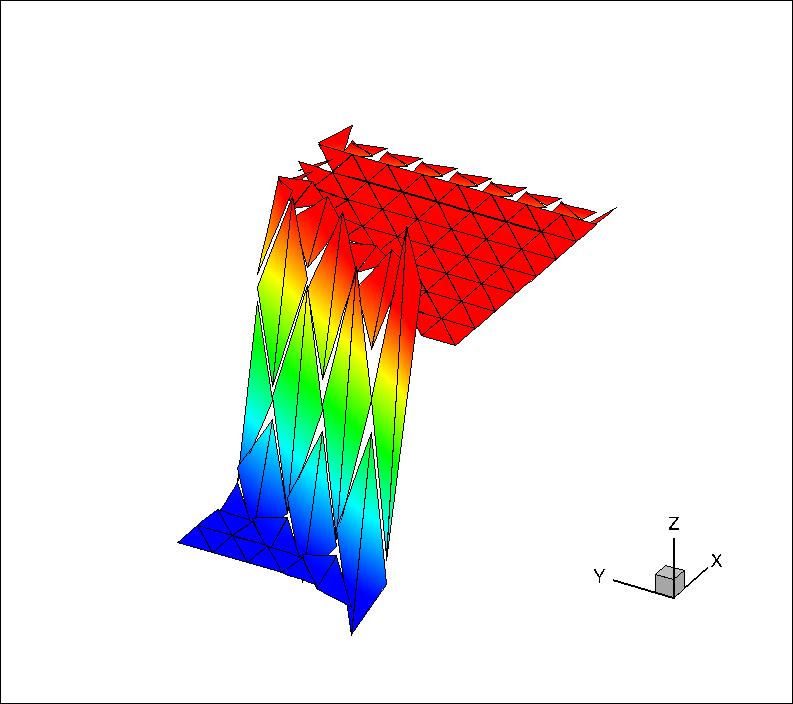}
\includegraphics[width = 0.3\textwidth]{./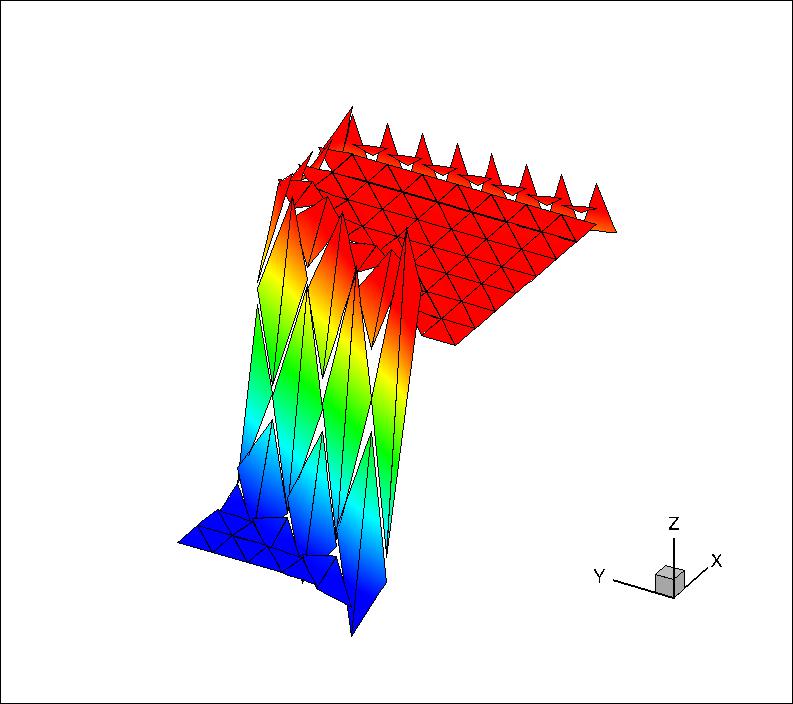}
\includegraphics[width = 0.3\textwidth]{./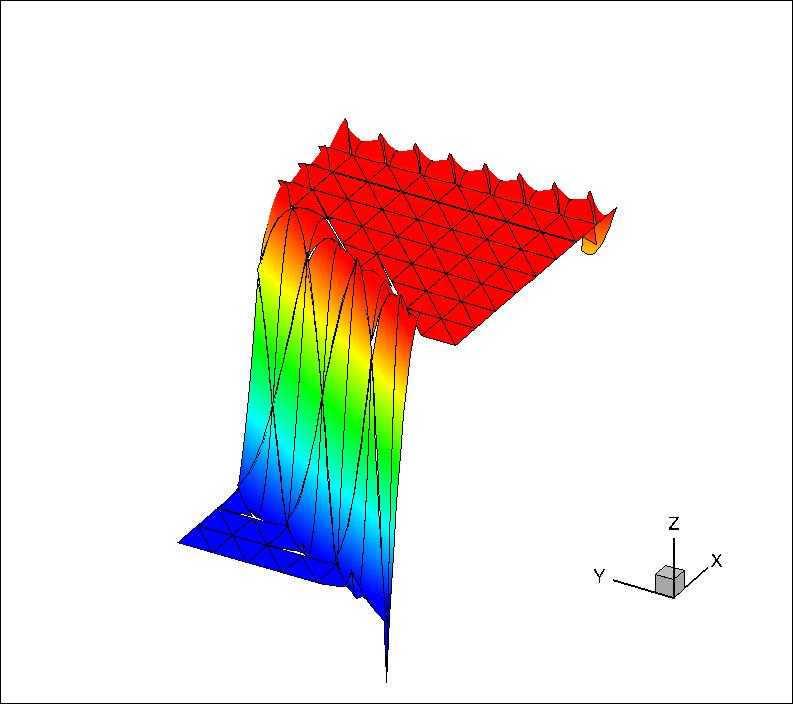}
\includegraphics[width = 0.3\textwidth]{./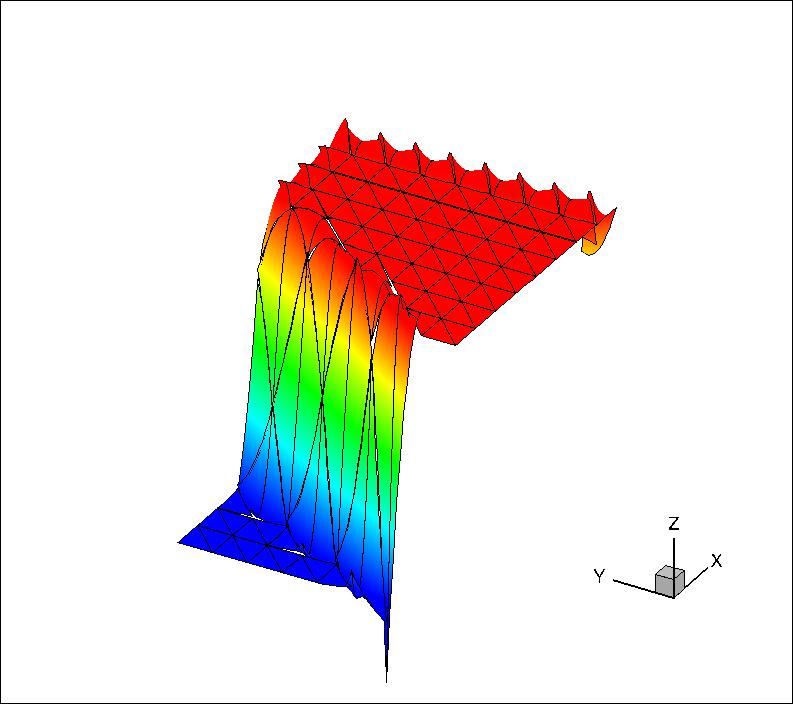}
\includegraphics[width = 0.3\textwidth]{./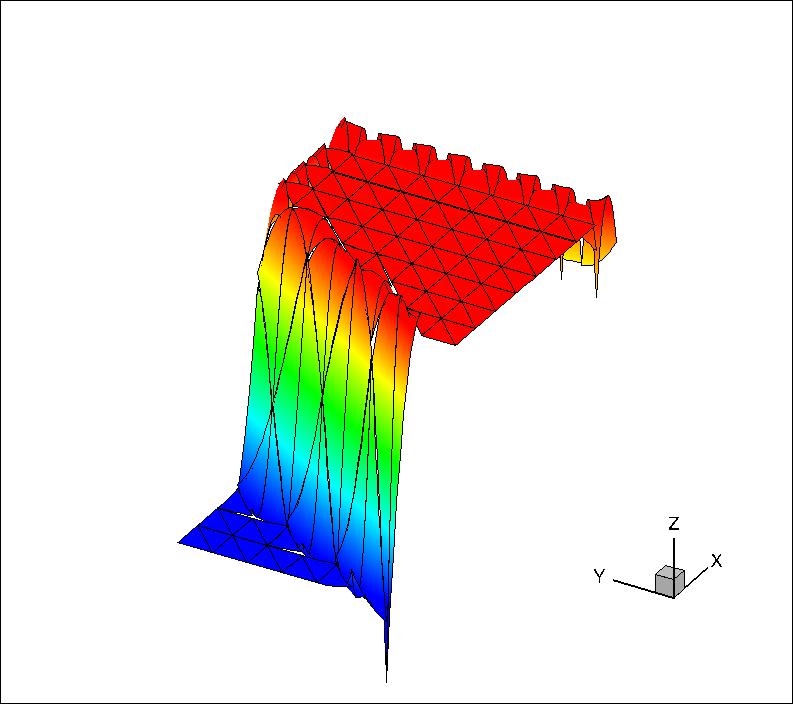}
\includegraphics[width = 0.3\textwidth]{./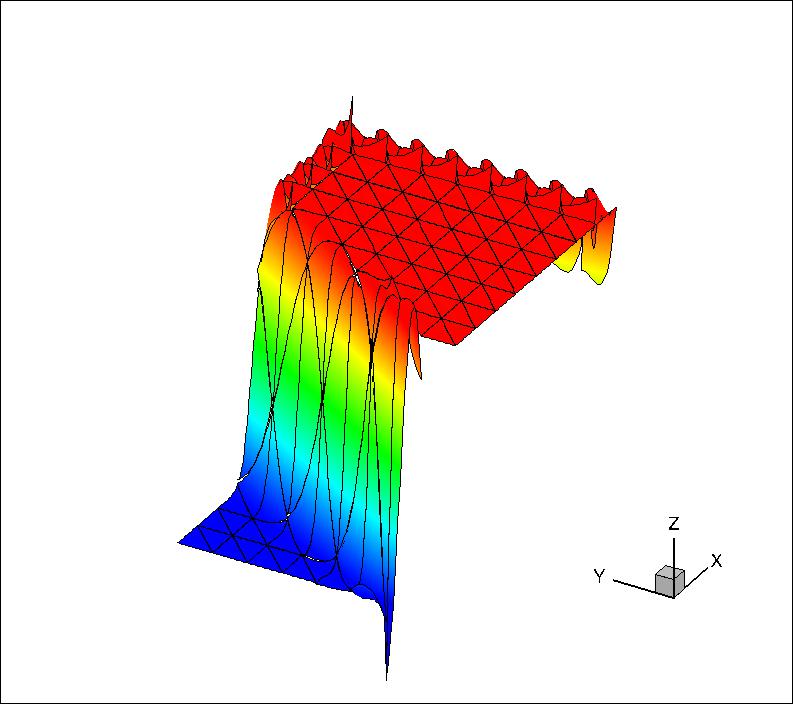}
\includegraphics[width = 0.3\textwidth]{./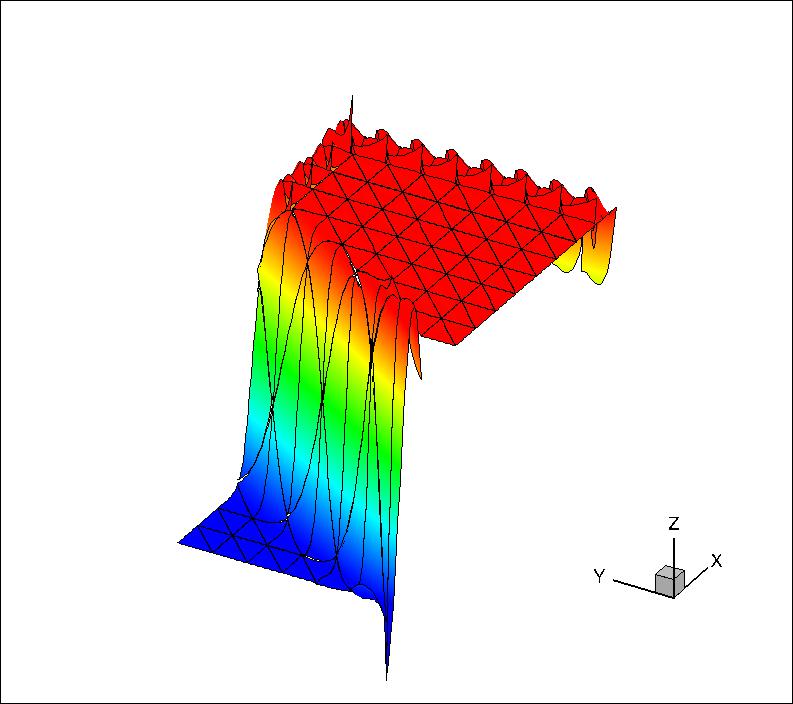}
\includegraphics[width = 0.3\textwidth]{./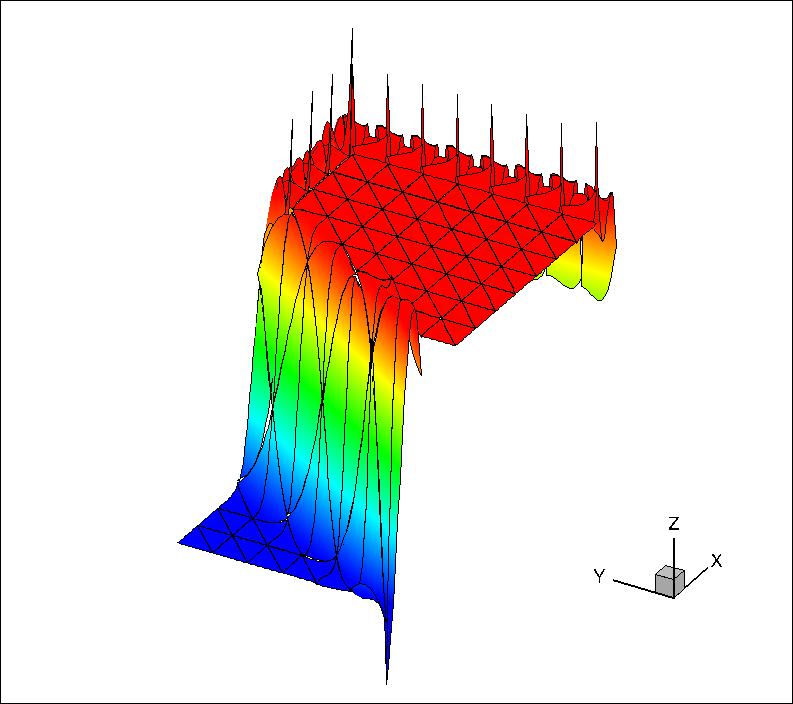}
\caption{3D plot of $u_h$ for the interior layer test with $\epsilon = 10^{-3}$ in 128 elements. Left--Right: $HDG1$, $HDG2$, $HDG3$.
Top--Bottom: $P_0$--$P_3$.}
\label{interior31} 
\end{figure}
\begin{figure}[htbp]
\centering
\includegraphics[width = 0.3\textwidth]{./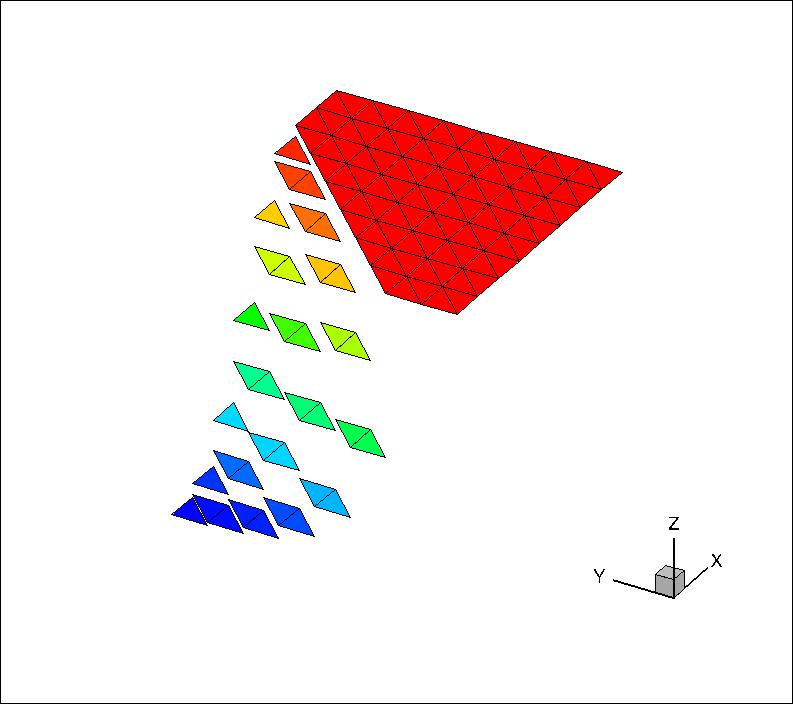}
\includegraphics[width = 0.3\textwidth]{./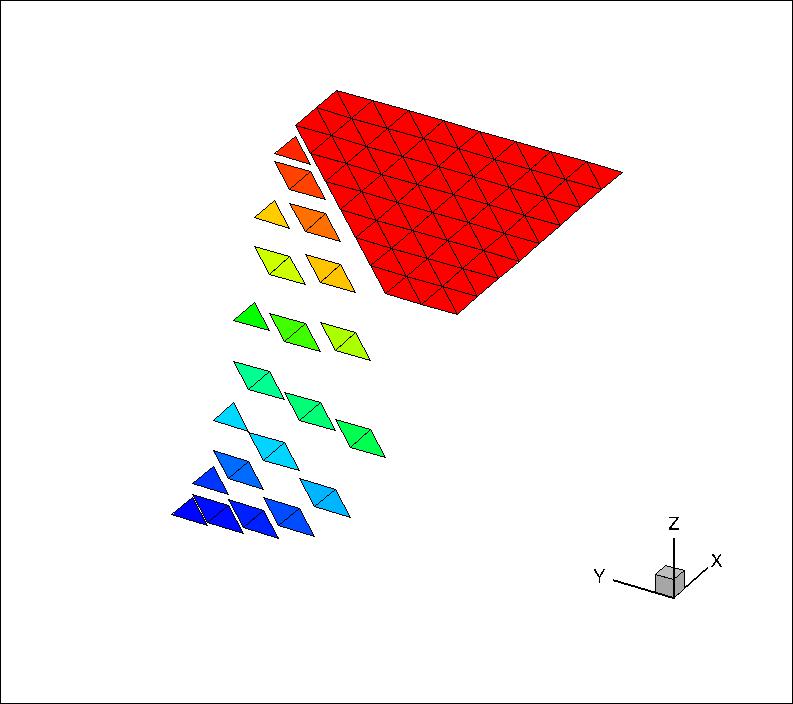}
\includegraphics[width = 0.3\textwidth]{./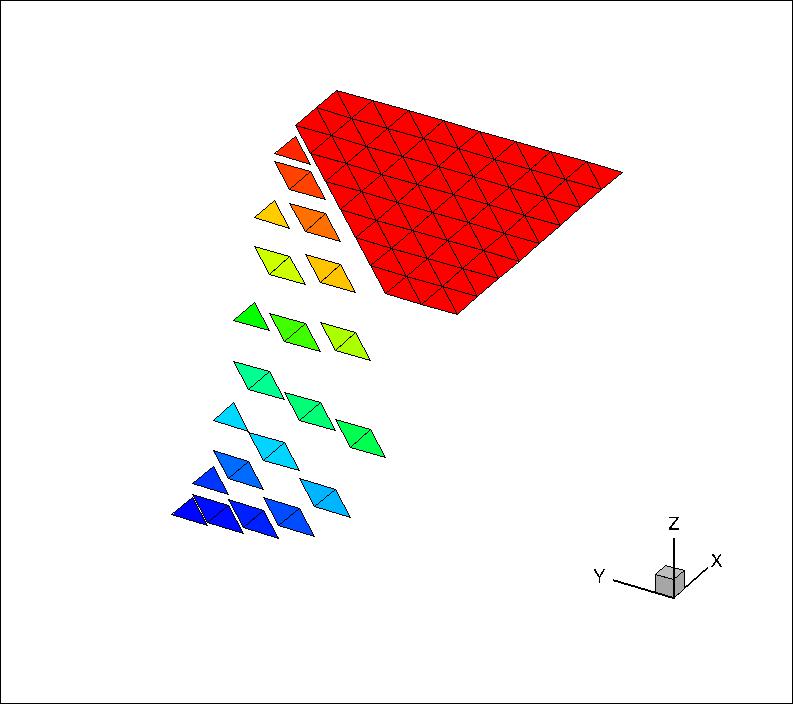}
\includegraphics[width = 0.3\textwidth]{./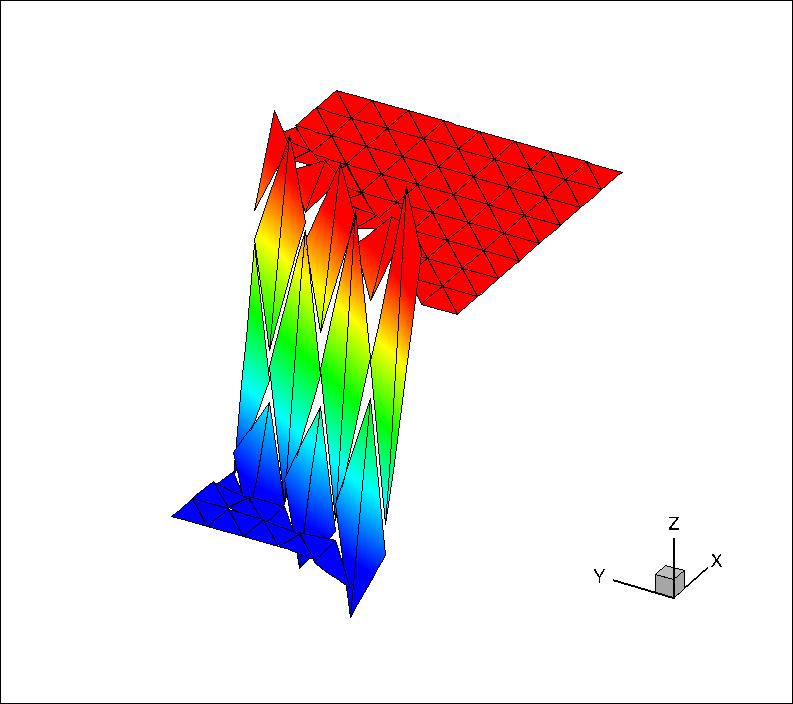}
\includegraphics[width = 0.3\textwidth]{./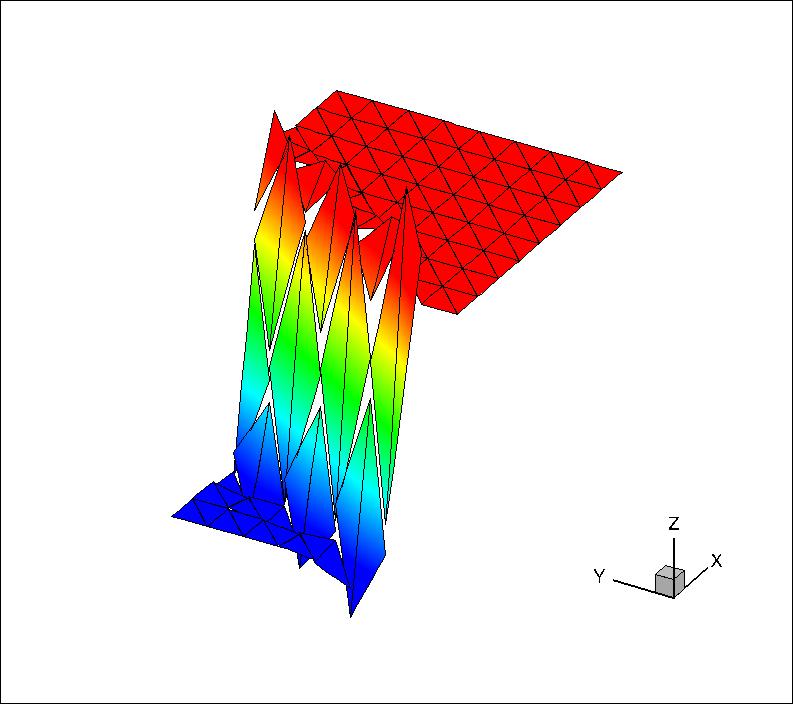}
\includegraphics[width = 0.3\textwidth]{./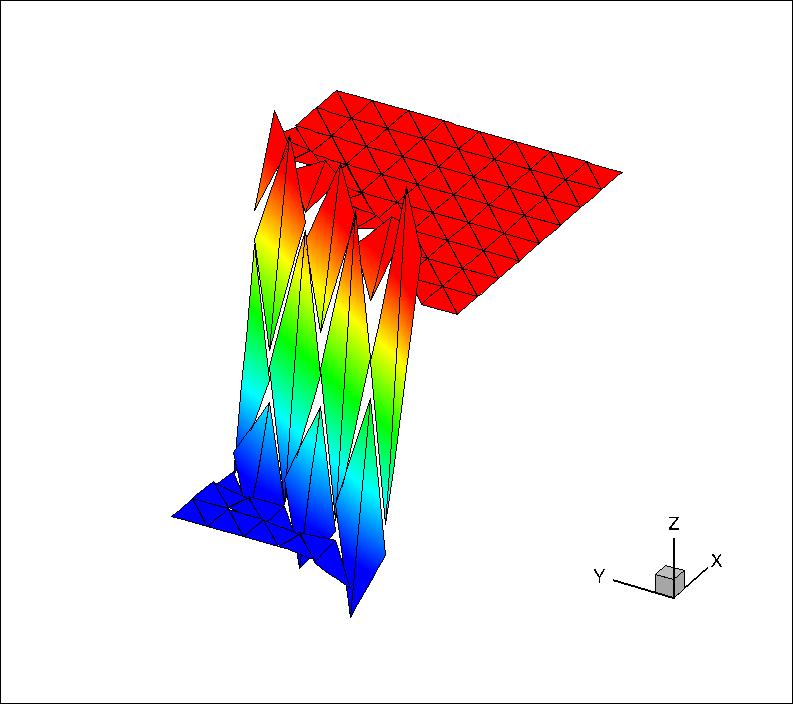}
\includegraphics[width = 0.3\textwidth]{./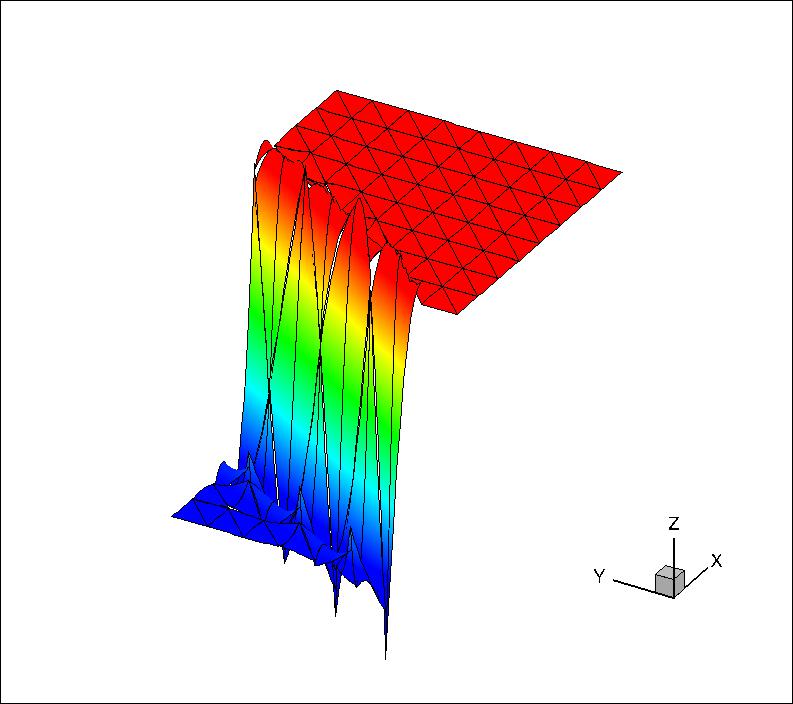}
\includegraphics[width = 0.3\textwidth]{./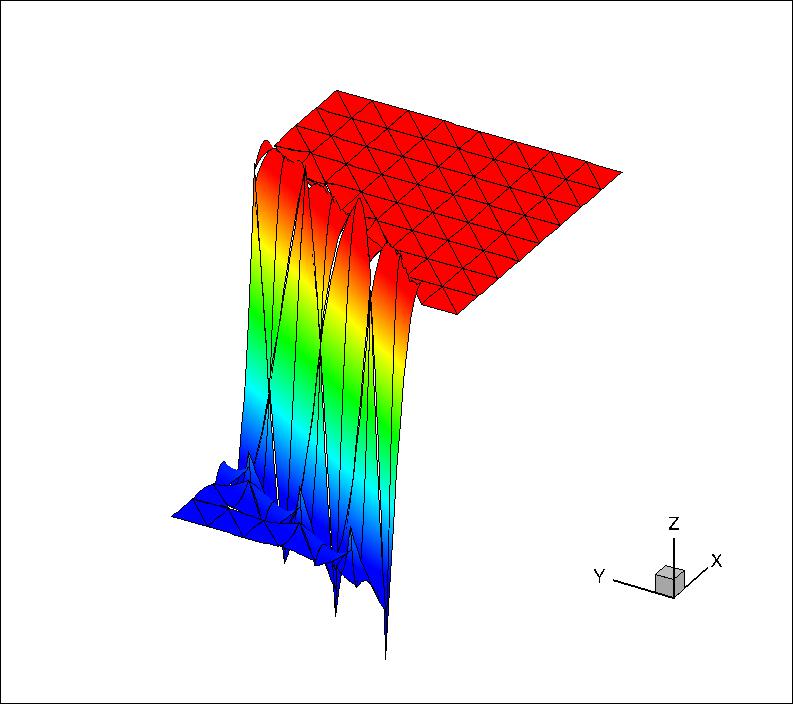}
\includegraphics[width = 0.3\textwidth]{./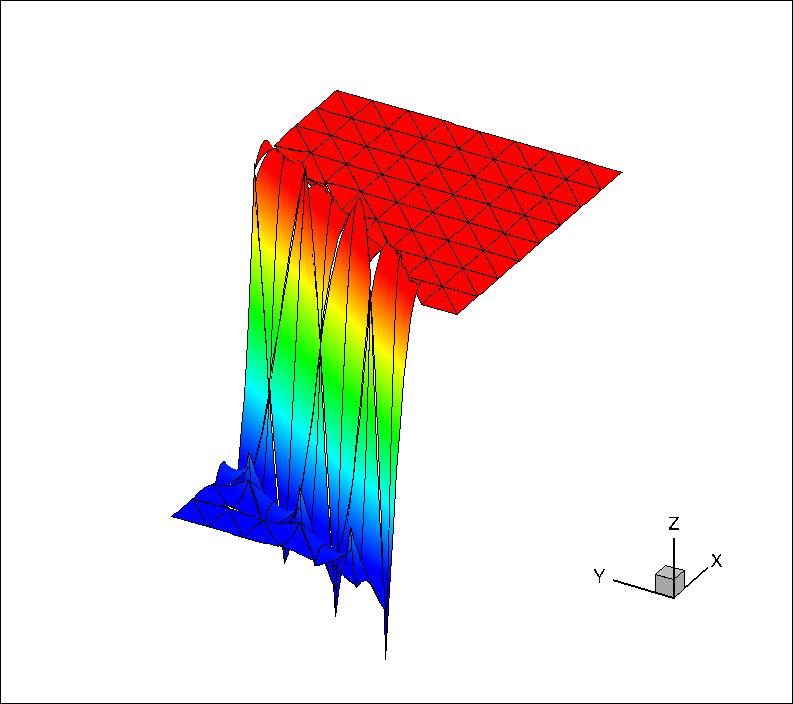}
\includegraphics[width = 0.3\textwidth]{./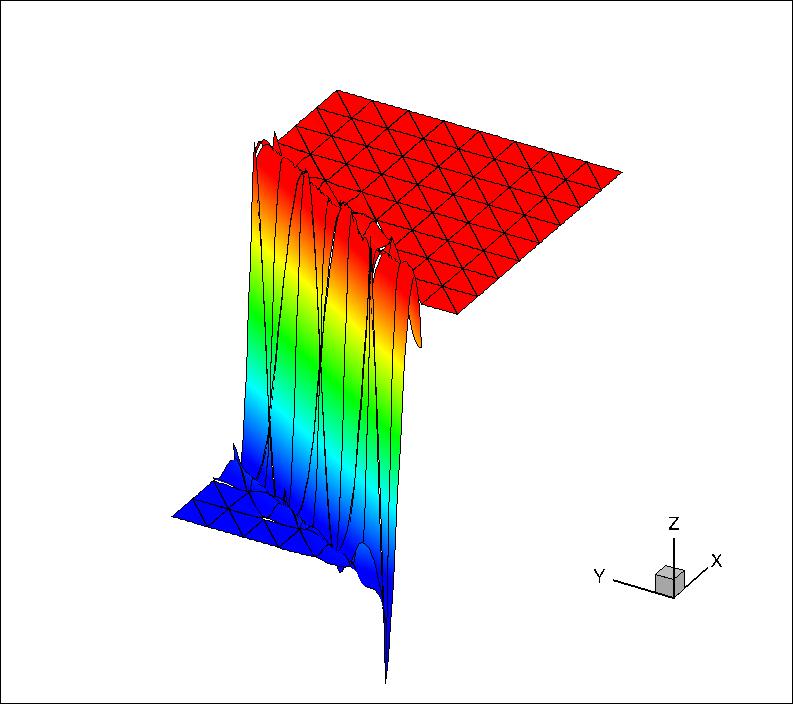}
\includegraphics[width = 0.3\textwidth]{./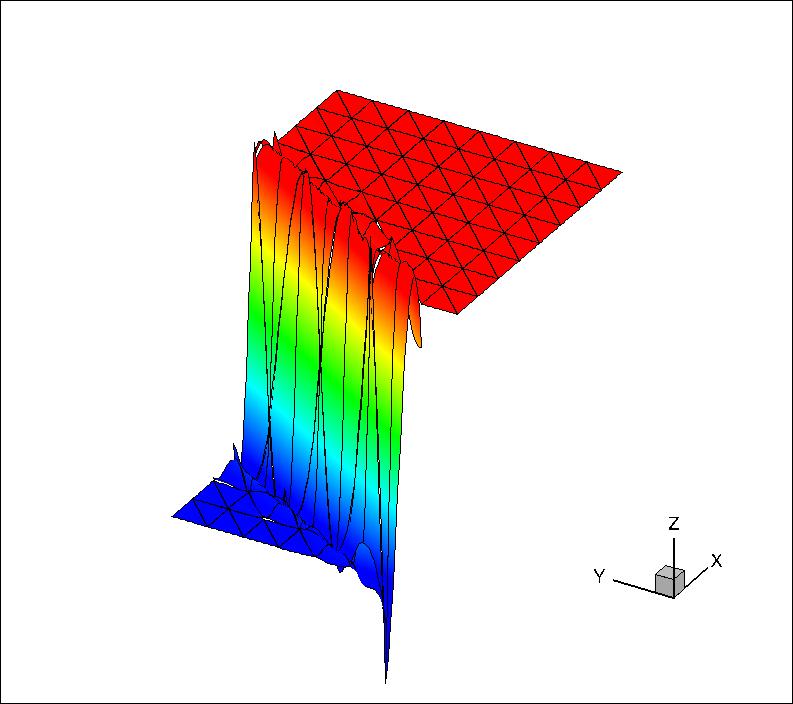}
\includegraphics[width = 0.3\textwidth]{./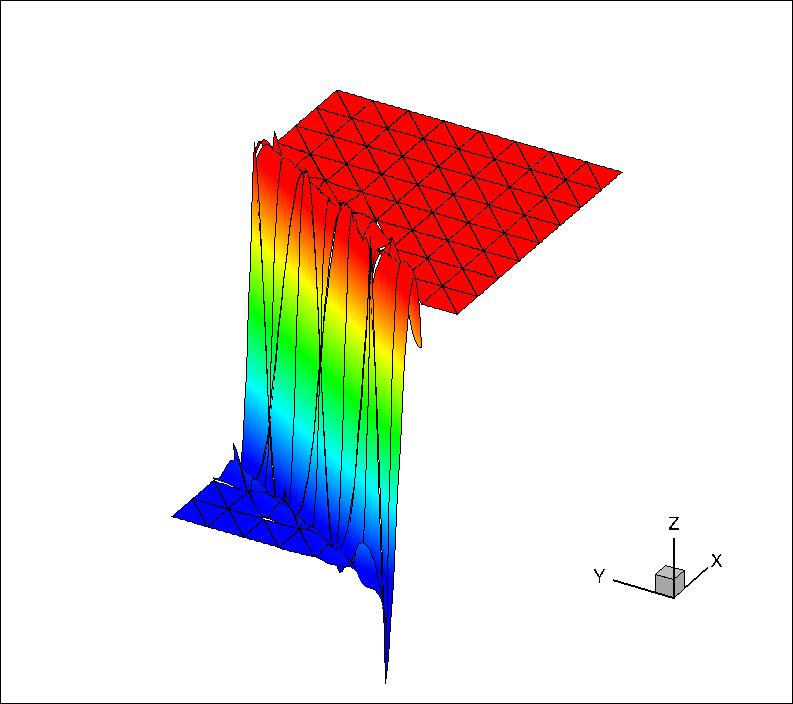}
\caption{3D plot of $u_h$ for the interior layer test with $\epsilon = 10^{-9}$ in 128 elements. Left--Right: $HDG1$, $HDG2$, $HDG3$.
Top--Bottom: $P_0$--$P_3$.}
\label{interior91} 
\end{figure}

\begin{figure}[htbp]
\centering
\includegraphics[width = 0.3\textwidth]{./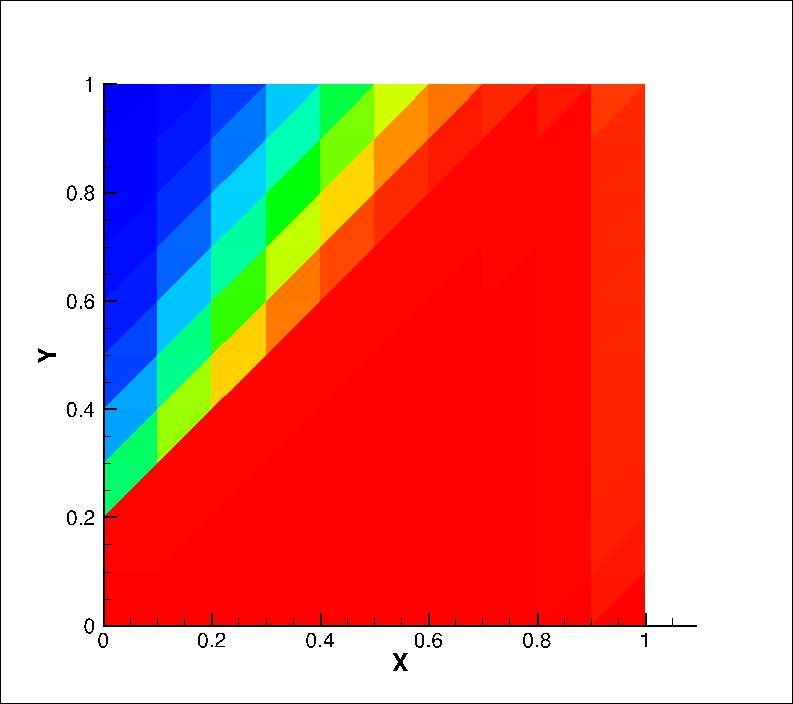}
\includegraphics[width = 0.3\textwidth]{./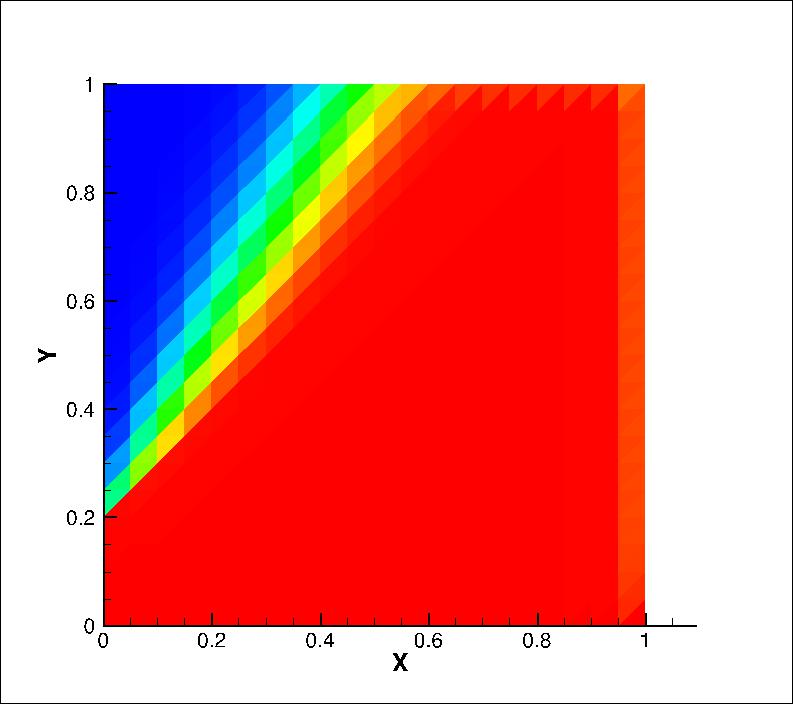}
\includegraphics[width = 0.3\textwidth]{./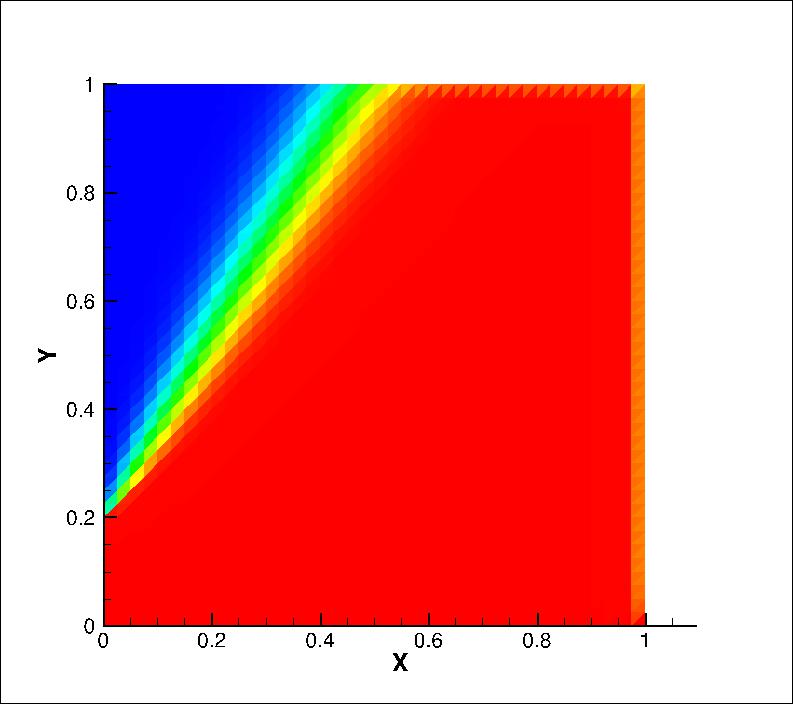}
\includegraphics[width = 0.3\textwidth]{./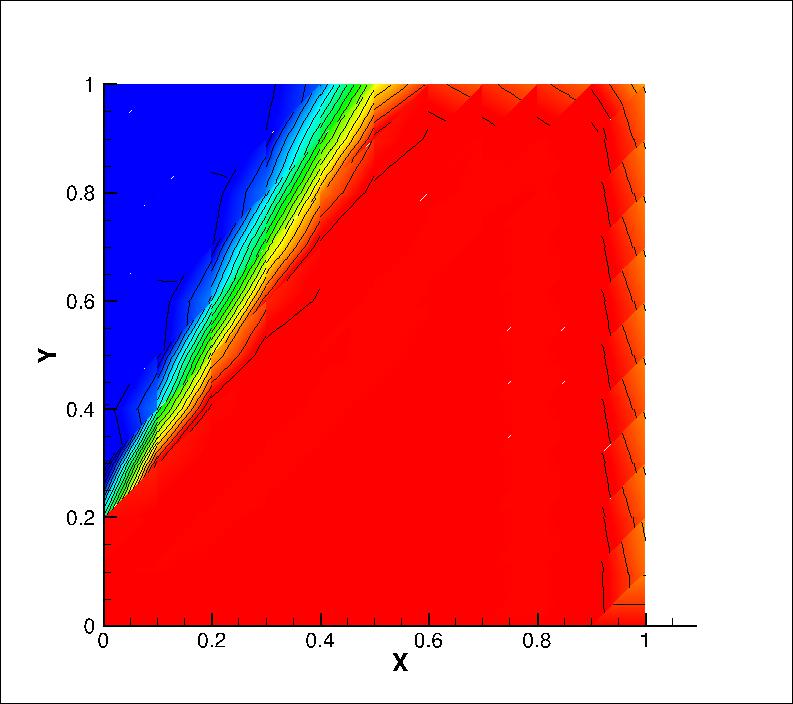}
\includegraphics[width = 0.3\textwidth]{./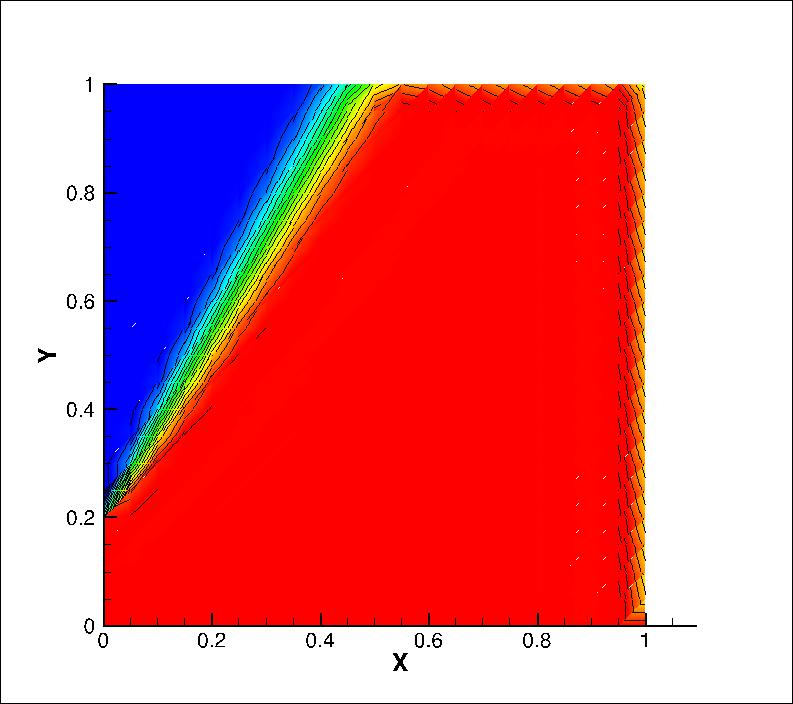}
\includegraphics[width = 0.3\textwidth]{./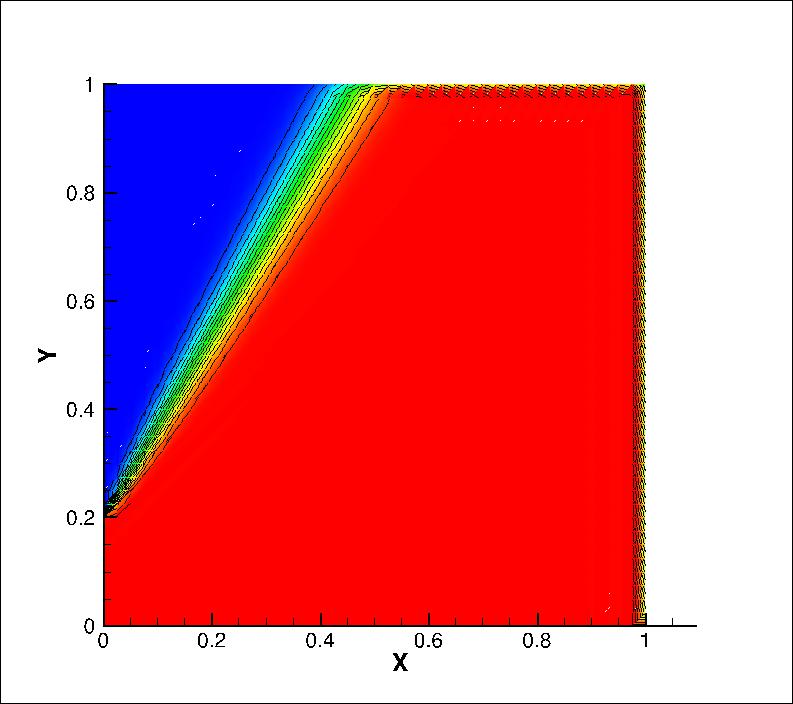}
\includegraphics[width = 0.3\textwidth]{./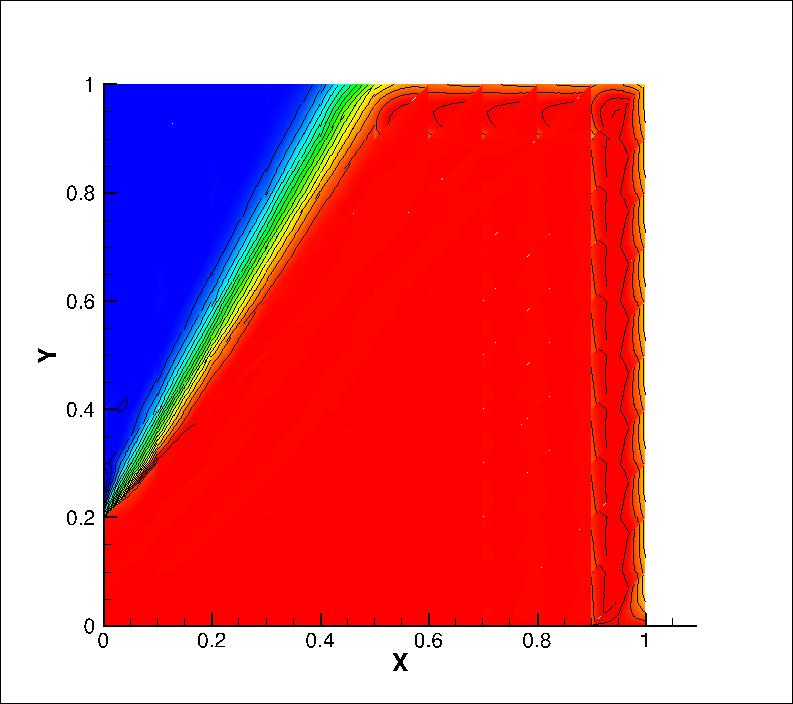}
\includegraphics[width = 0.3\textwidth]{./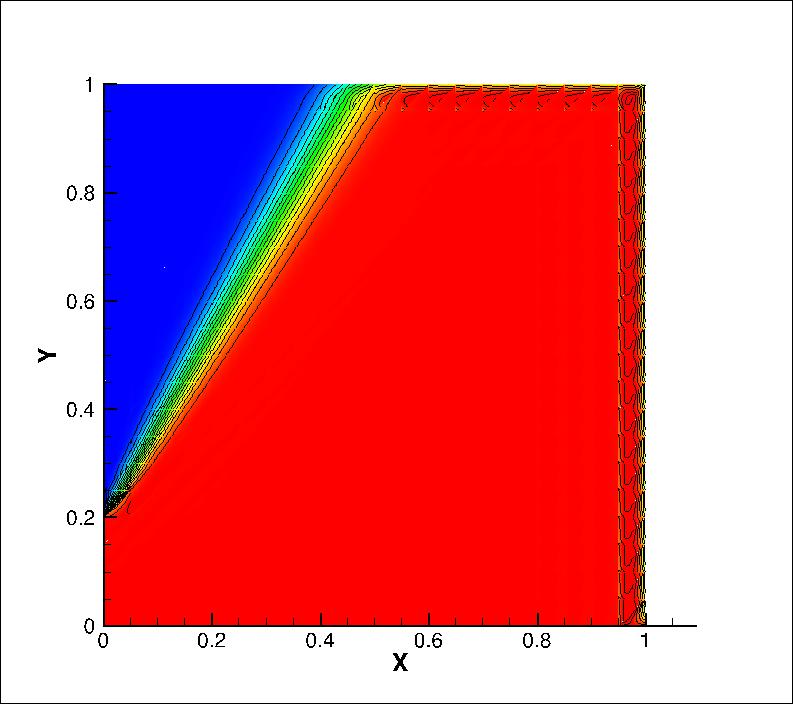}
\includegraphics[width = 0.3\textwidth]{./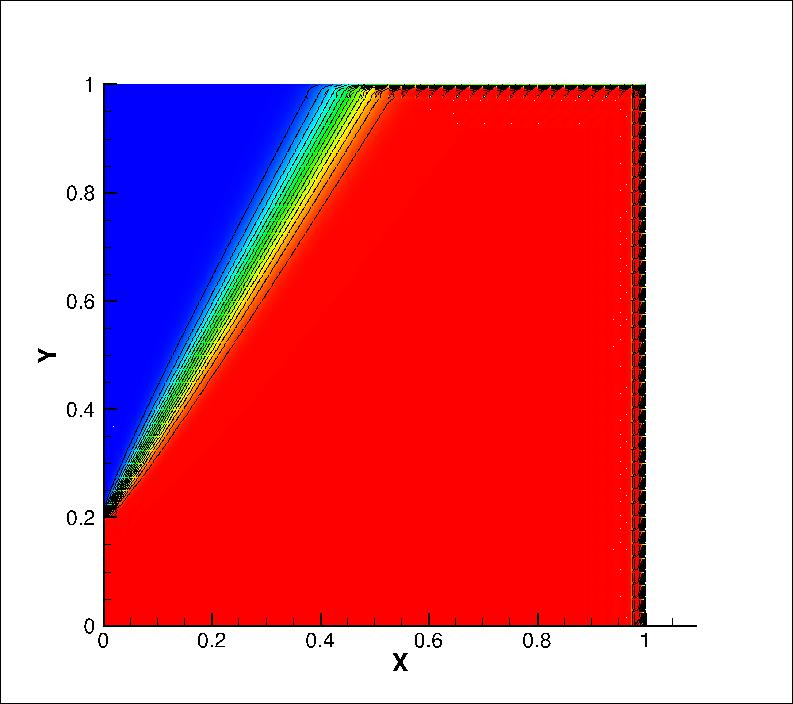}
\includegraphics[width = 0.3\textwidth]{./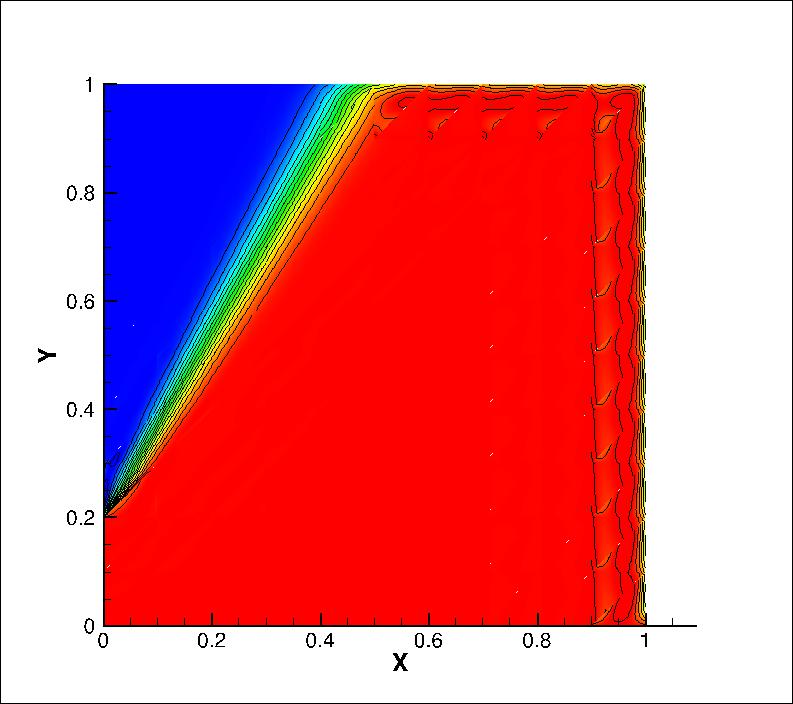}
\includegraphics[width = 0.3\textwidth]{./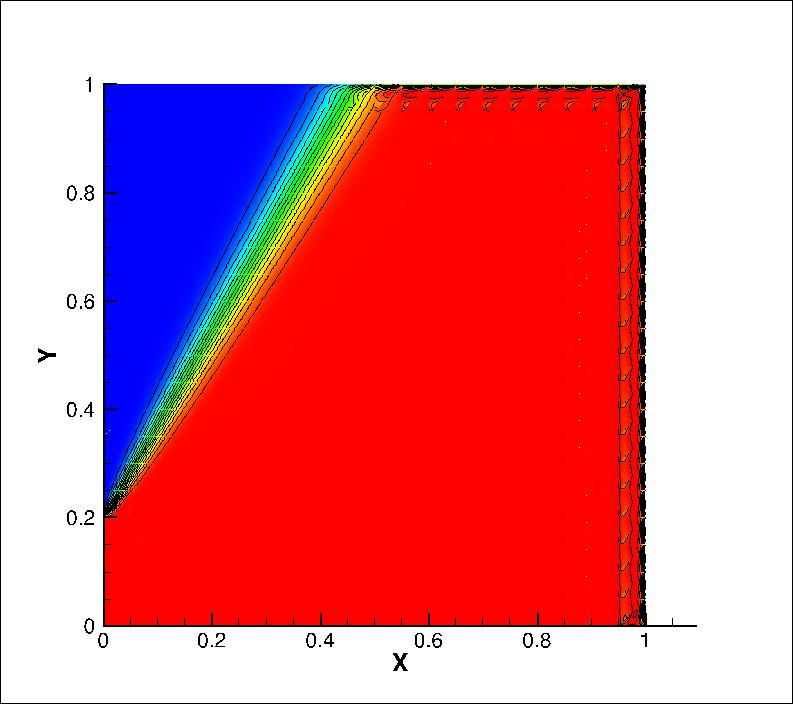}
\includegraphics[width = 0.3\textwidth]{./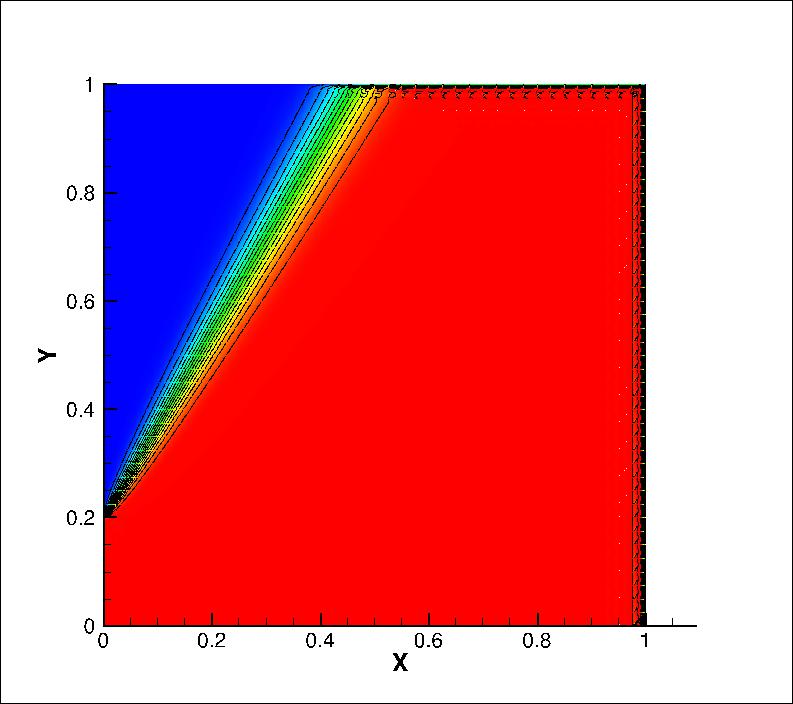}
\caption{Contour plot of $u_h$ using $HDG1$ for the interior layer test with $\epsilon = 10^{-3}$. Left to right: three consecutive meshes with the left one consists of
a uniform triangulation of 200 elements. 
Top--Bottom: $P_0$--$P_3$.}
\label{interior_contour} 
\end{figure}

\subsection{A boundary layer test}
Finally, we take $\boldsymbol{\beta} = [1, 1]^T$, and choose the source term $f$ so that the exact solution
\[
u(x,y) = \sin\frac{\pi\,x}{2} + \sin\frac{\pi\,y}{2}\left(1-\sin\frac{\pi\,x}{2}\right) + \frac{e^{-1/\epsilon} - e^{-(1-x)(1-y)/\epsilon}}{1 - e^{-1/\epsilon}}.
\]
The solution develops boundary layers along the top and right boundaries for small $\epsilon$ (see Fig.~\ref{bdry21} for $\epsilon = 10^{-2}$ and Fig.~\ref{bdry61}
for $\epsilon = 10^{-6}$). 
We take an exact solution which is a slight modification of that considered in \cite{AyusoMarini:cdf} so that, 
away from the boundary layers, our exact solution behaves not 
like a quadratic polynomial as in \cite{AyusoMarini:cdf}. 
This modification is useful for us to clearly see the orders of convergence for $k=2,3$.

In Fig.~\ref{bdry21} and Fig.~\ref{bdry61}, we plot the exact solution and computational
results for $\epsilon =  10^{-2}$ and $\epsilon = 10^{-6}$ in a structured 200 elements. 
We find that all the HDG methods produce similar results. The boundary layers are not resolved since 
the mesh is too coarse. 

In Table~\ref{bdry_order}, we show the convergence of $u_h$ in $L^2$--norm
for $\epsilon = 10^{-2},10^{-6}$ in the reduced domain $\widetilde{\Omega} = [0, 0.9]\times[0,0.9]\subset \Omega$ to exclude the unresolved boundary
layers.  Just as in the smooth case, the three HDG methods produce very similar convergence results. Hence we only show the computed results for $P_k$-$HDG1$ in Table~\ref{bdry_order}.
We observe optimal $L^2$--convergence rates for $u_h$.%, which analysis will be carried out in our forthcoming paper.

\begin{figure}[htbp]
\centering
\includegraphics[width = 0.3\textwidth]{./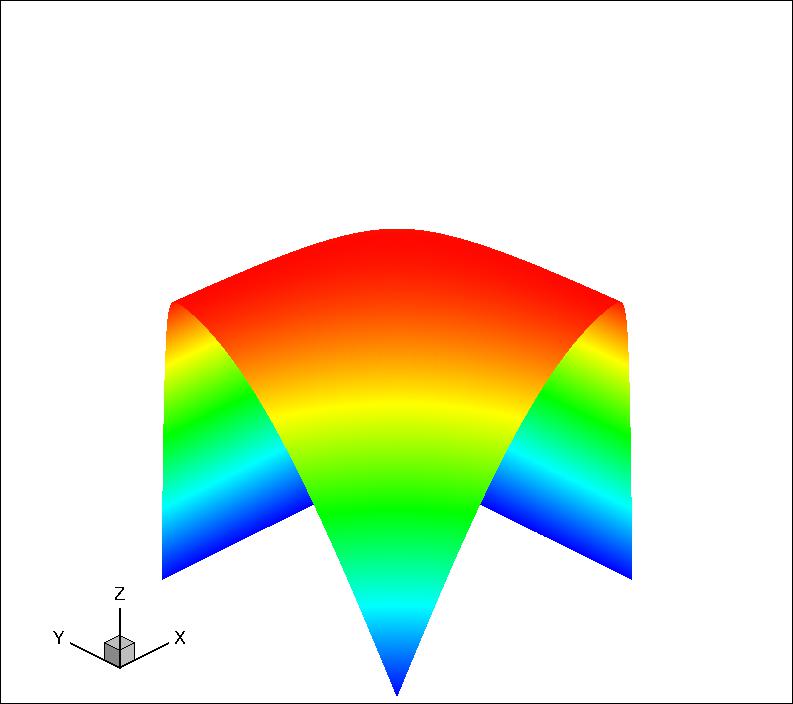}\\
\includegraphics[width = 0.3\textwidth]{./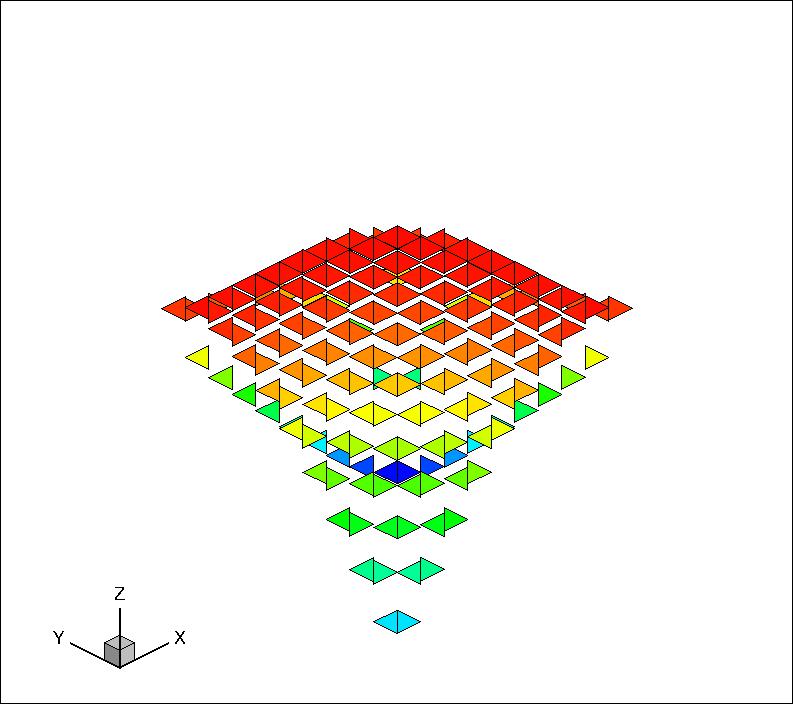}
\includegraphics[width = 0.3\textwidth]{./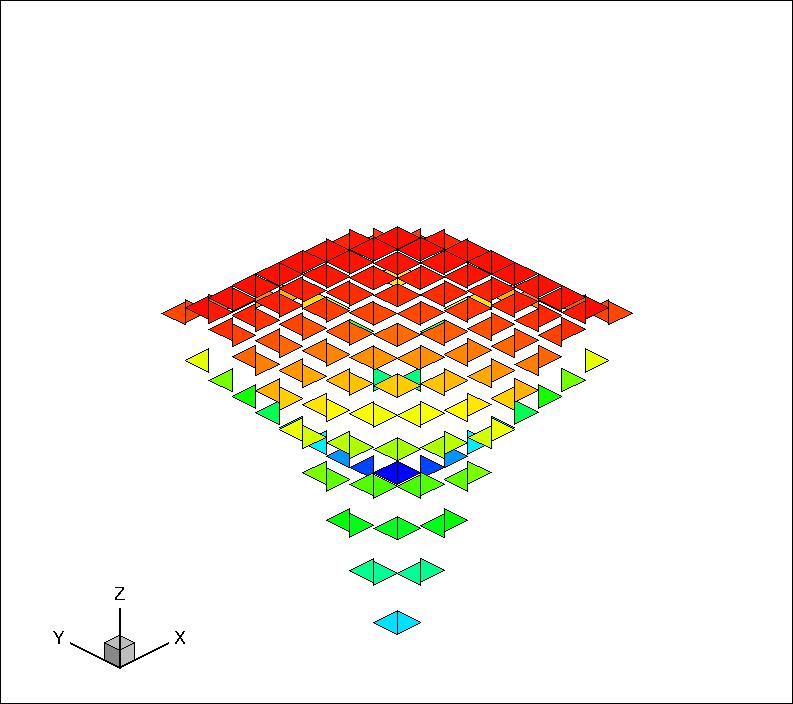}
\includegraphics[width = 0.3\textwidth]{./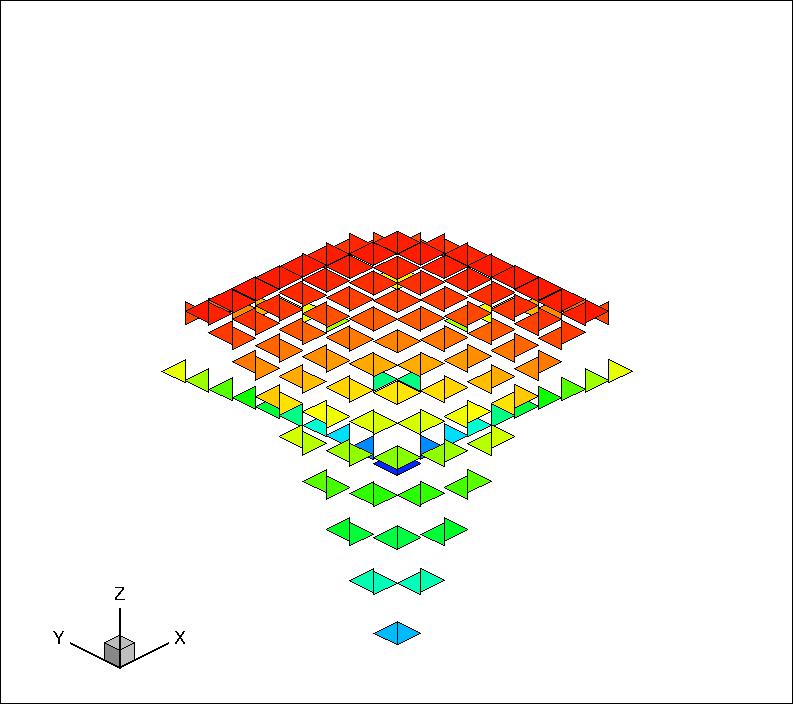}
\includegraphics[width = 0.3\textwidth]{./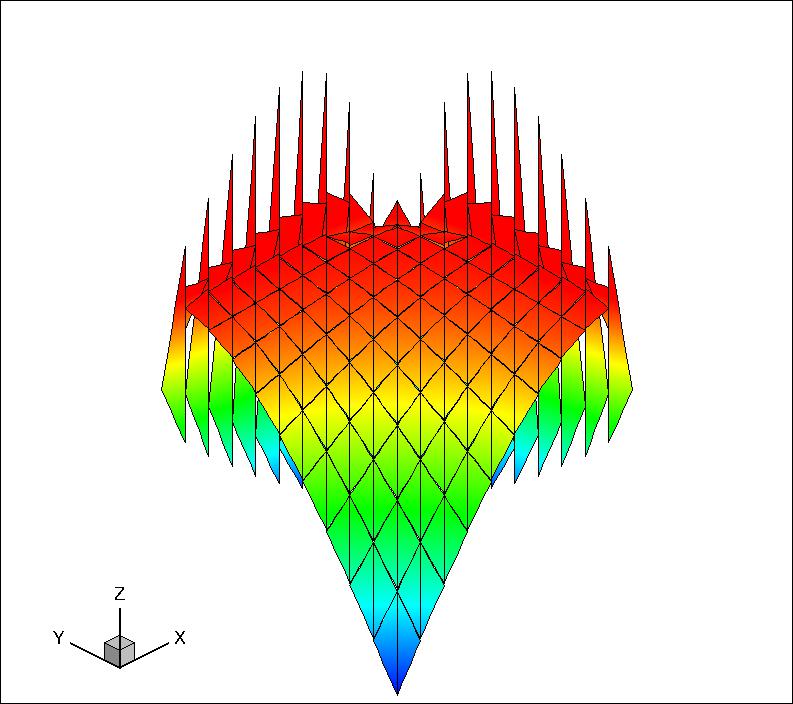}
\includegraphics[width = 0.3\textwidth]{./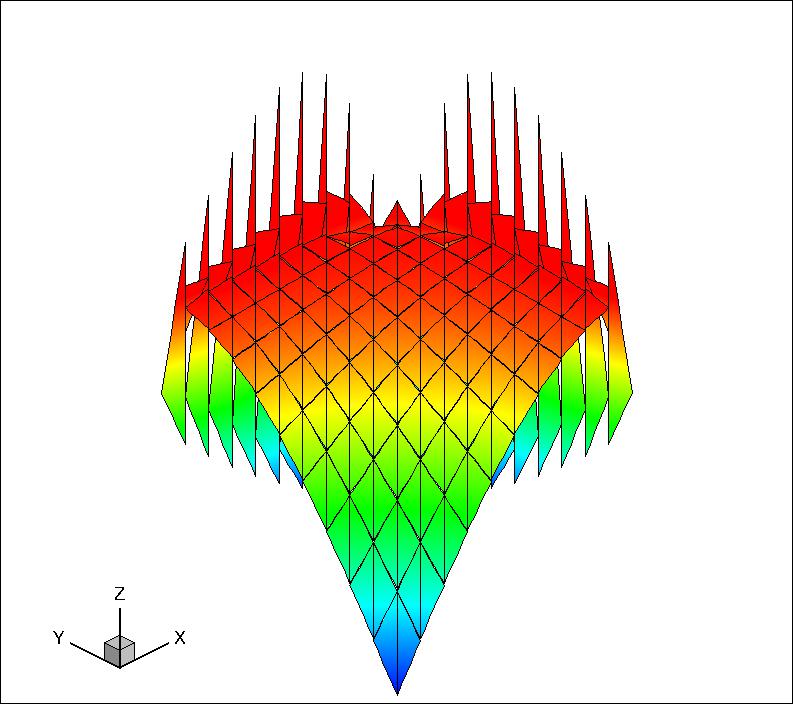}
\includegraphics[width = 0.3\textwidth]{./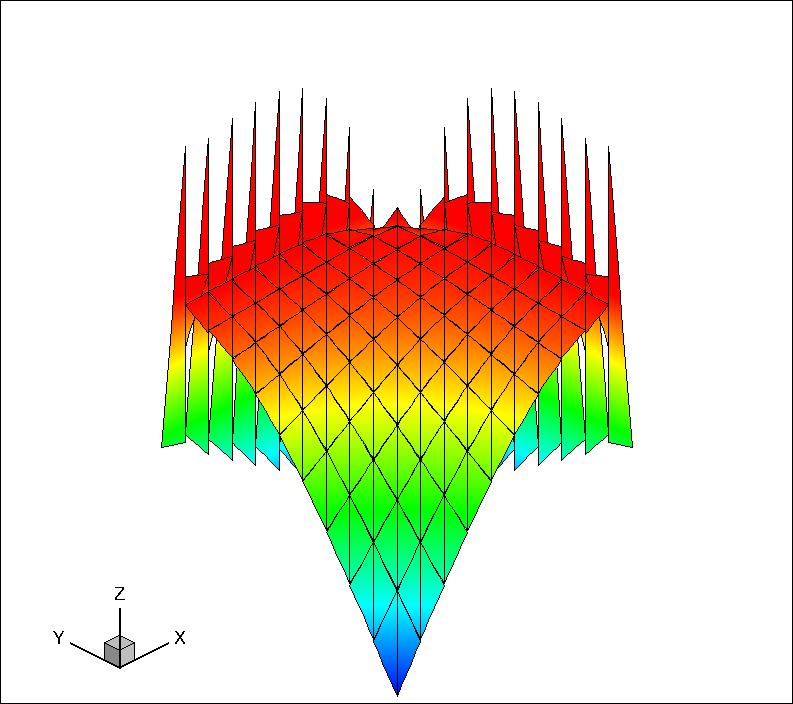}
\includegraphics[width = 0.3\textwidth]{./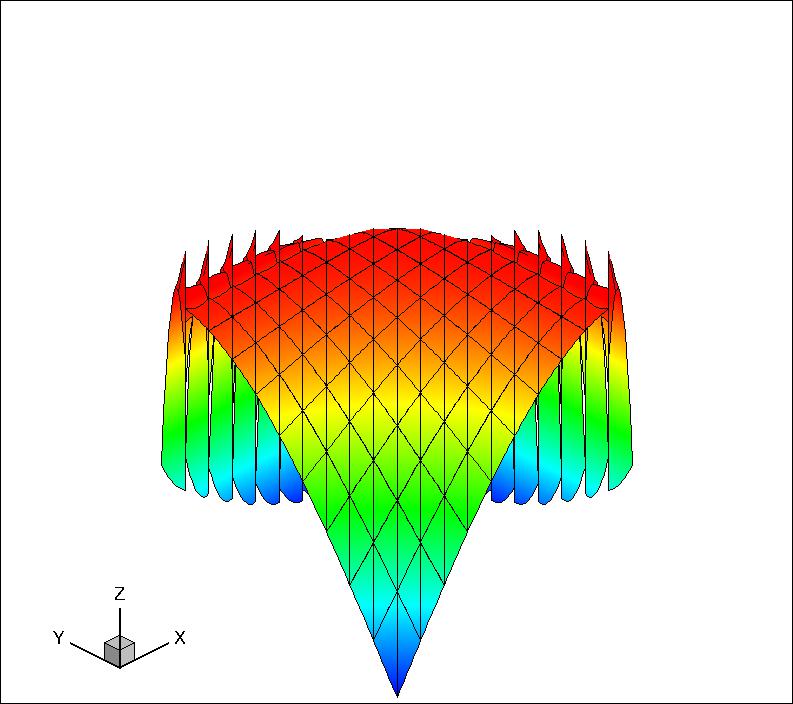}
\includegraphics[width = 0.3\textwidth]{./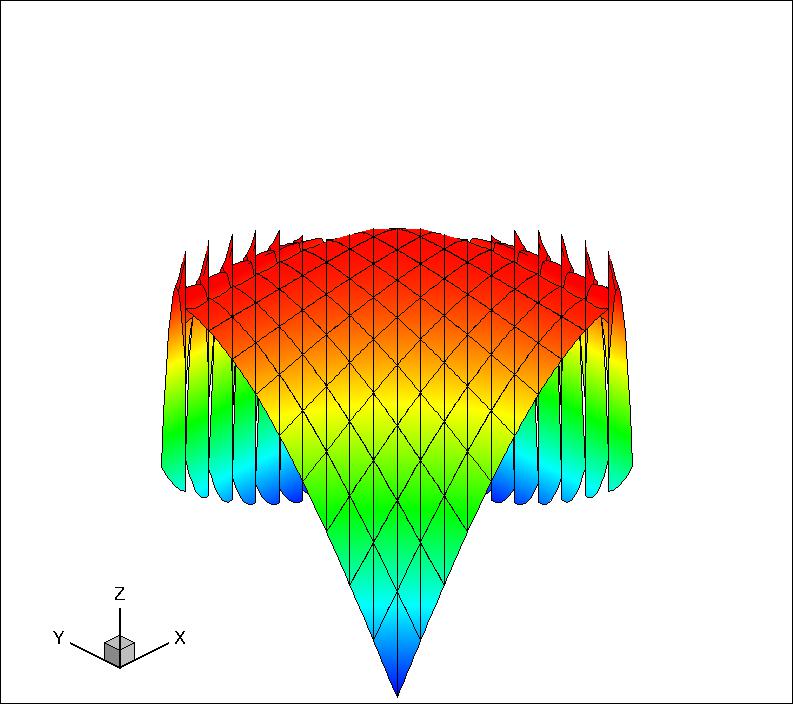}
\includegraphics[width = 0.3\textwidth]{./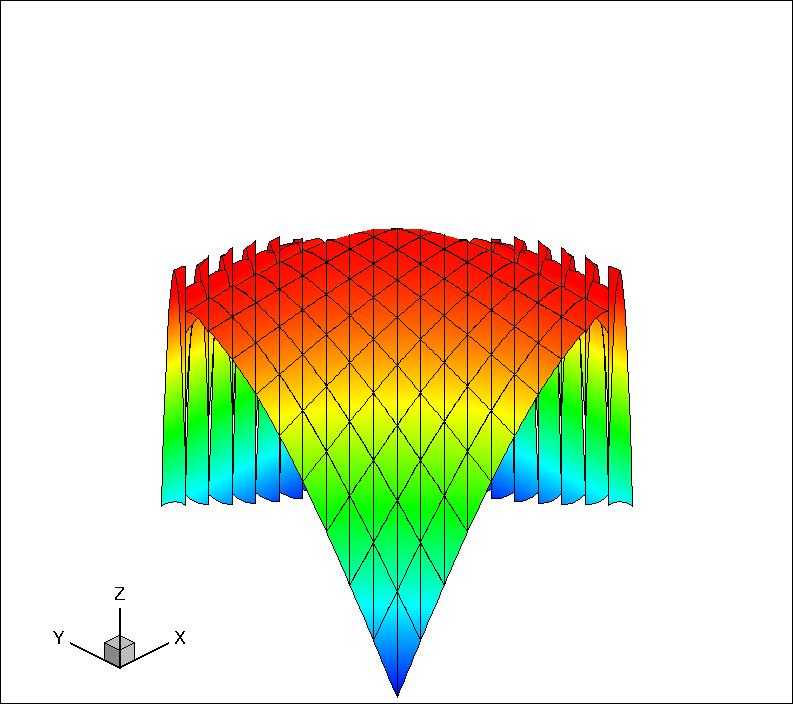}
\includegraphics[width = 0.3\textwidth]{./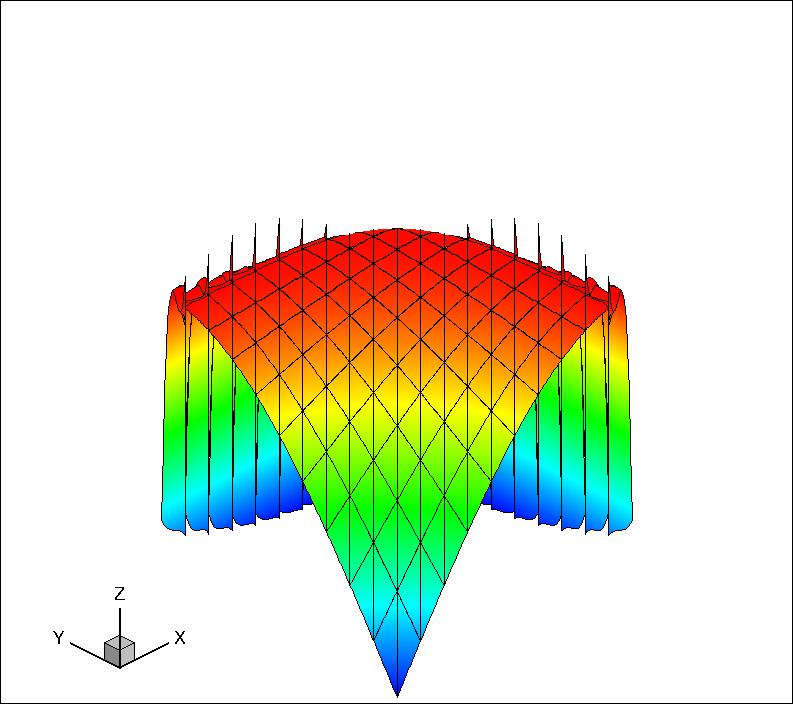}
\includegraphics[width = 0.3\textwidth]{./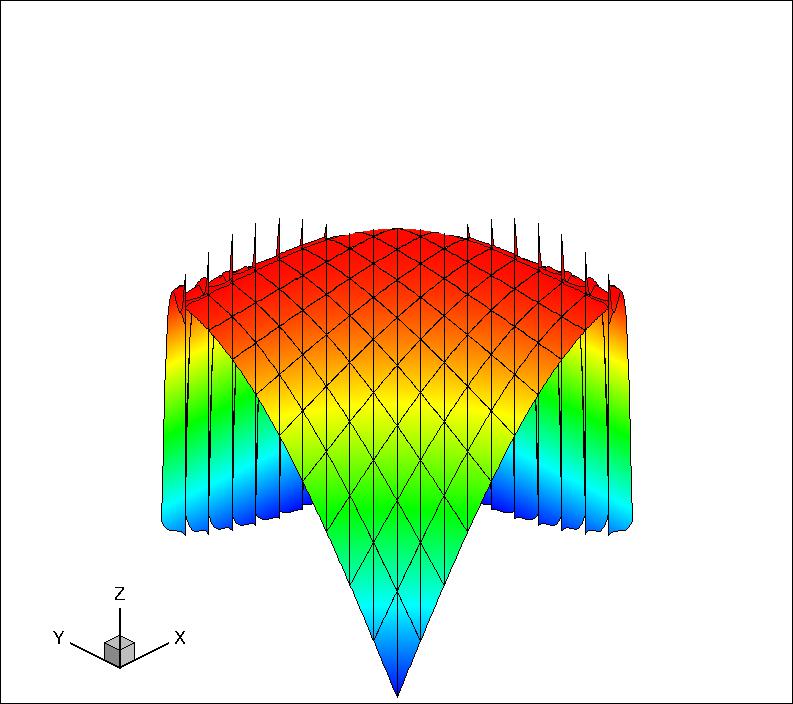}
\includegraphics[width = 0.3\textwidth]{./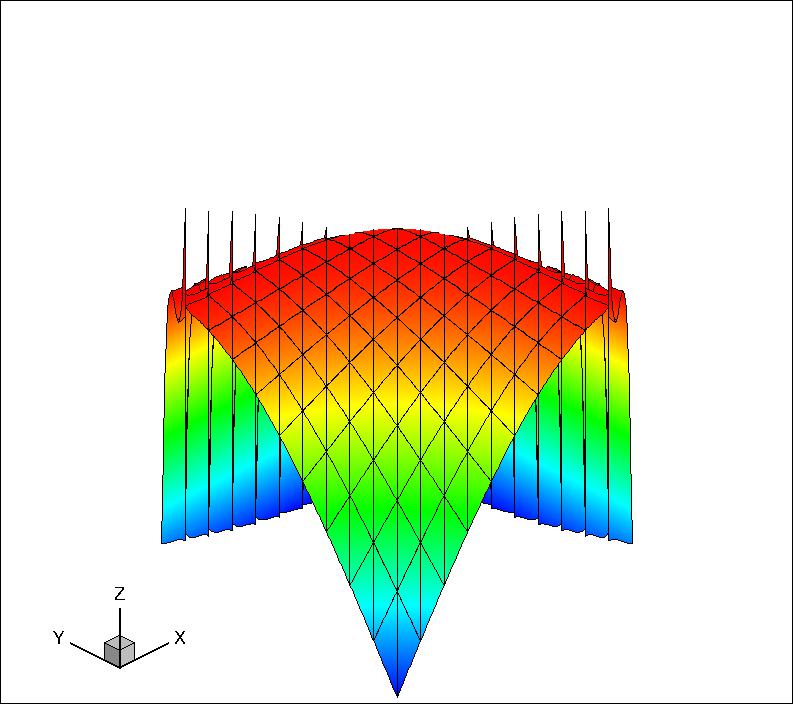}
\caption{3D plot of the exact solution and $u_h$ for the boundary layer test with $\epsilon = 10^{-2}$ in 128 elements. Top center: the exact solution.
 Left--Right: $HDG1$, $HDG2$, $HDG3$.
Top--Bottom: $P_0$--$P_3$.}
\label{bdry21} 
\end{figure}

\begin{figure}[htbp]
\centering
\includegraphics[width = 0.3\textwidth]{./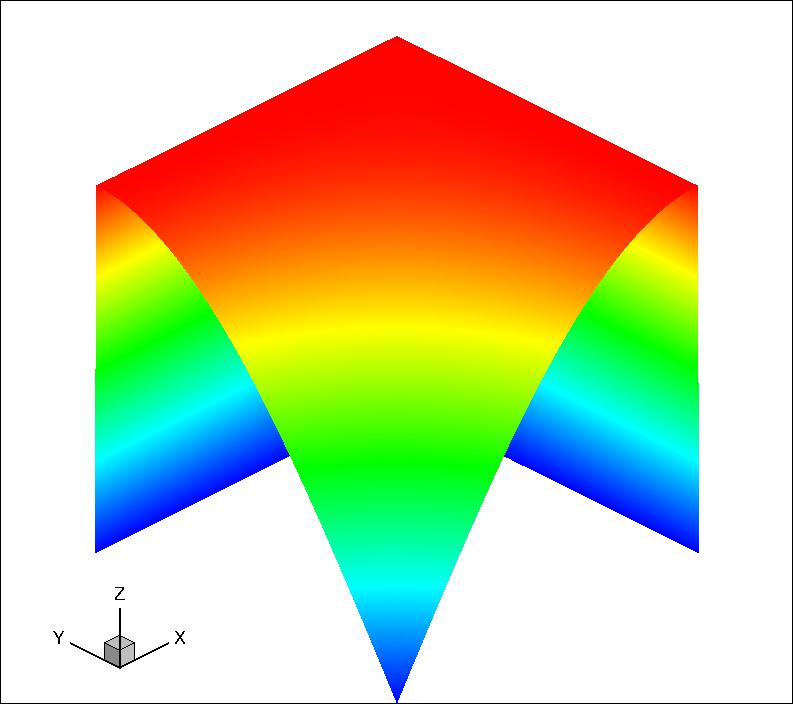}\\
\includegraphics[width = 0.3\textwidth]{./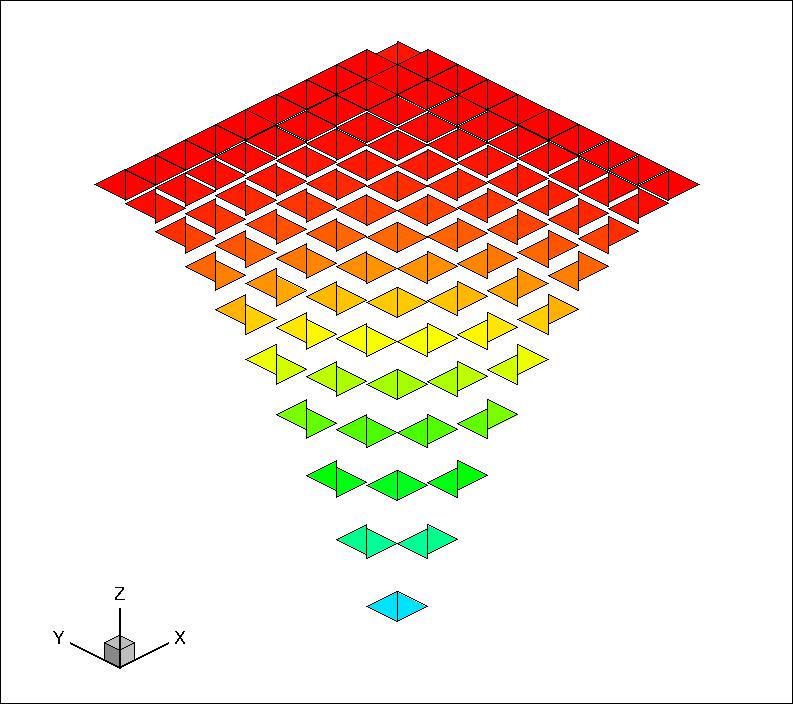}
\includegraphics[width = 0.3\textwidth]{./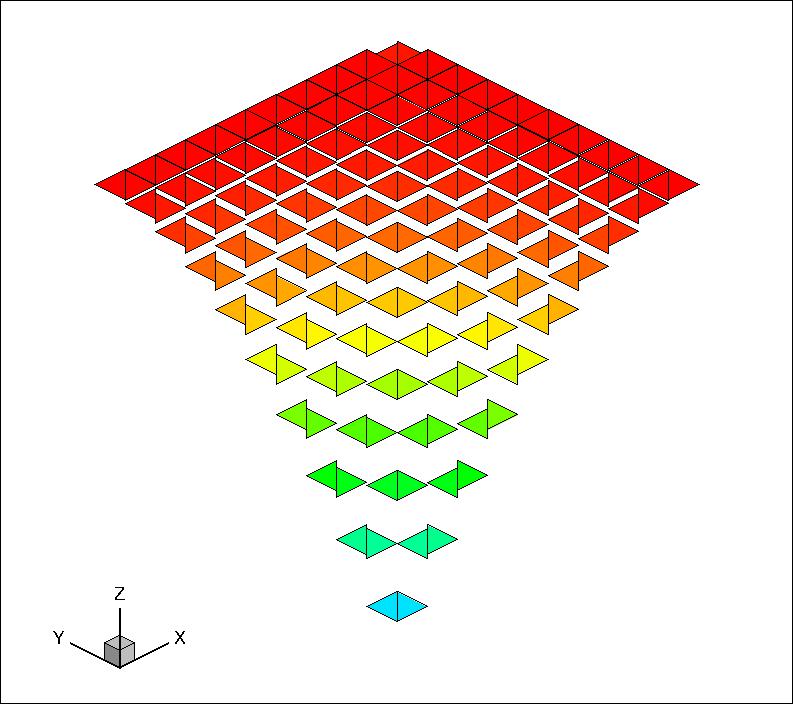}
\includegraphics[width = 0.3\textwidth]{./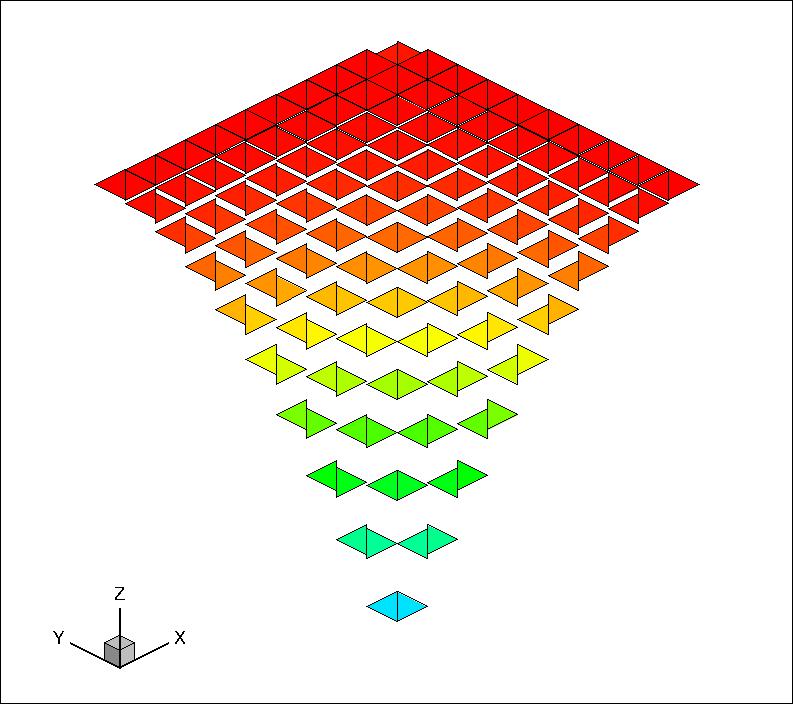}
\includegraphics[width = 0.3\textwidth]{./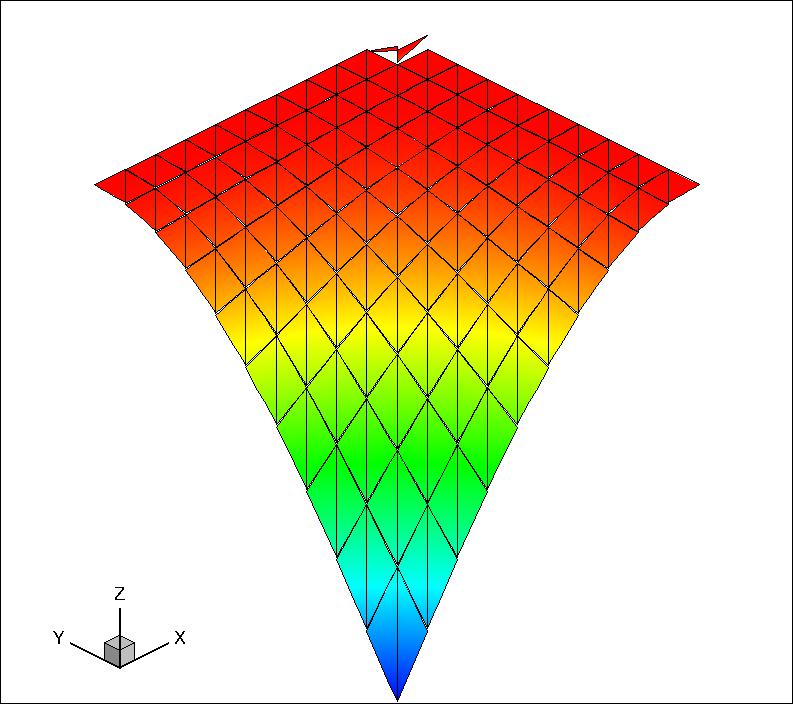}
\includegraphics[width = 0.3\textwidth]{./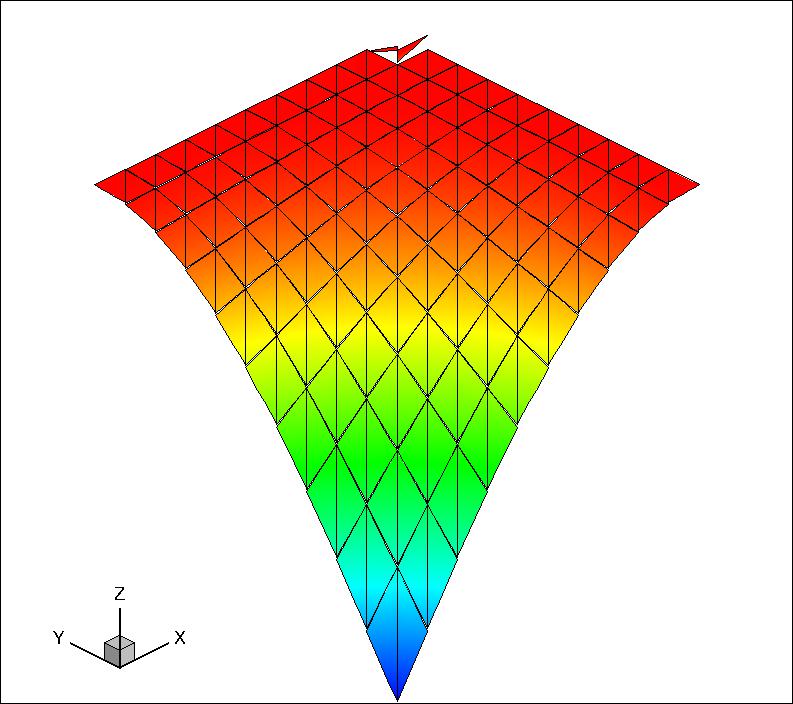}
\includegraphics[width = 0.3\textwidth]{./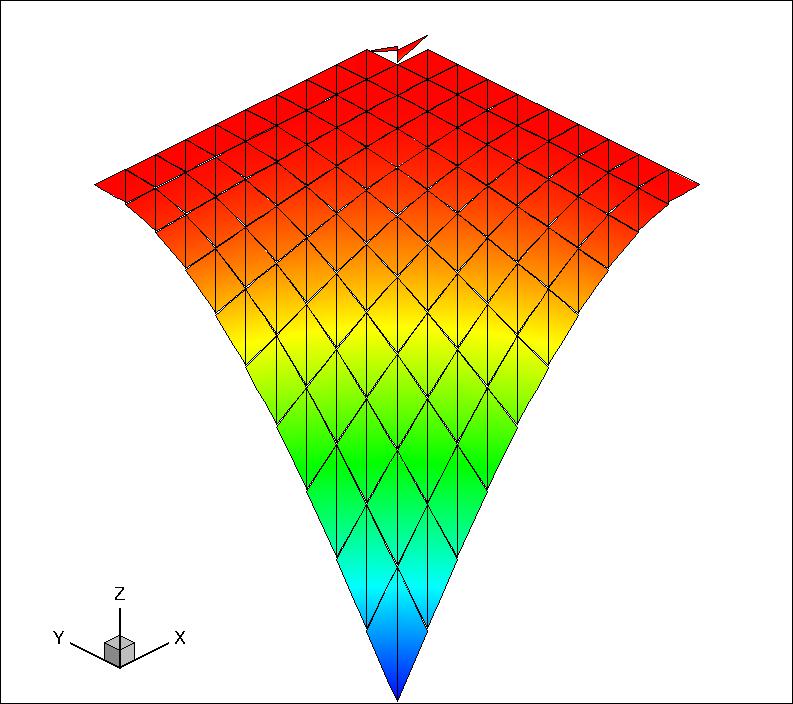}
\includegraphics[width = 0.3\textwidth]{./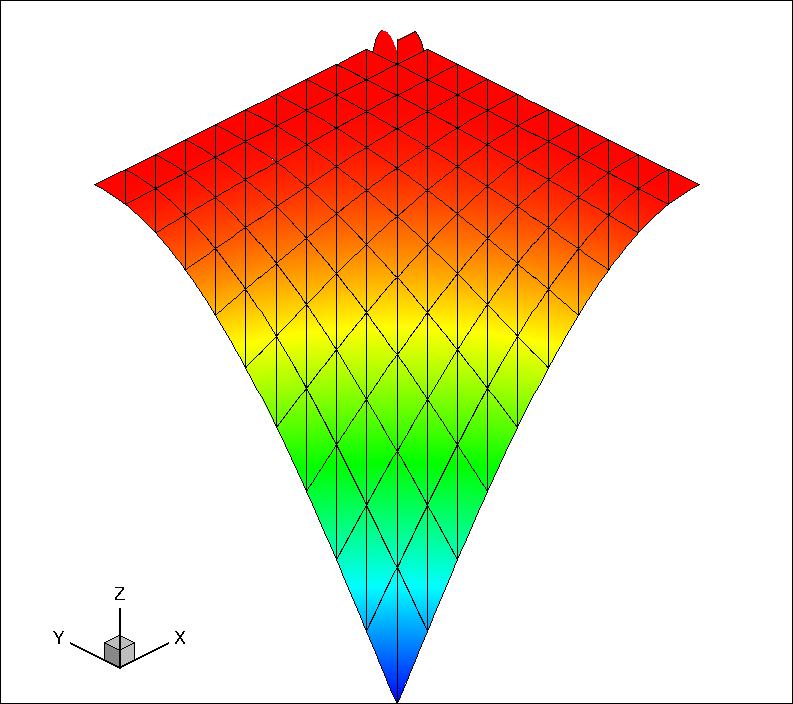}
\includegraphics[width = 0.3\textwidth]{./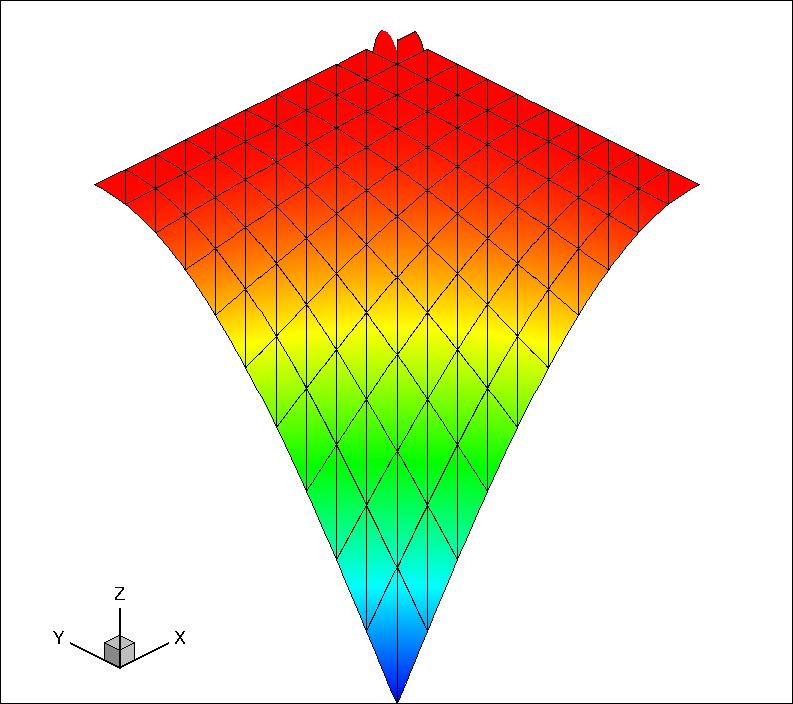}
\includegraphics[width = 0.3\textwidth]{./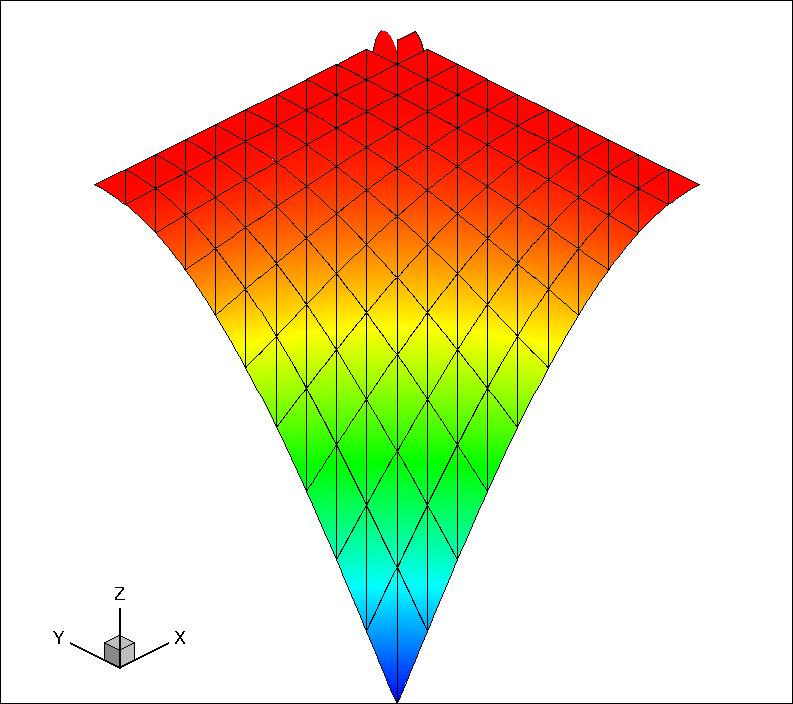}
\includegraphics[width = 0.3\textwidth]{./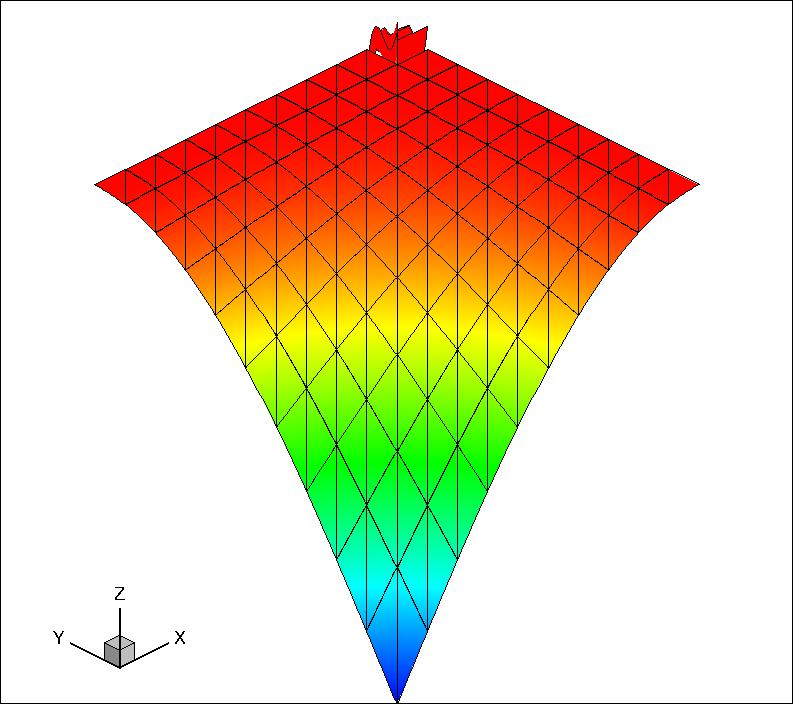}
\includegraphics[width = 0.3\textwidth]{./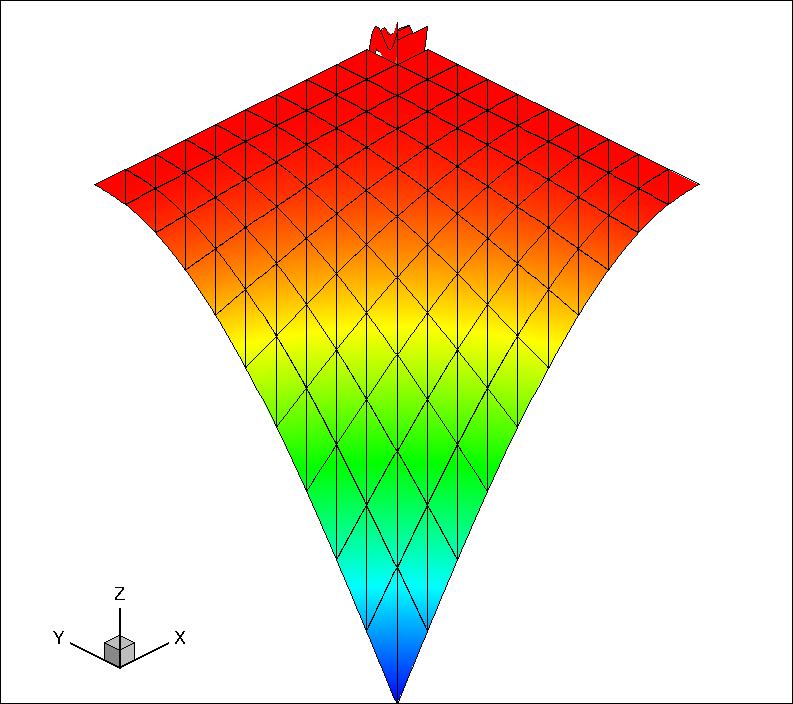}
\includegraphics[width = 0.3\textwidth]{./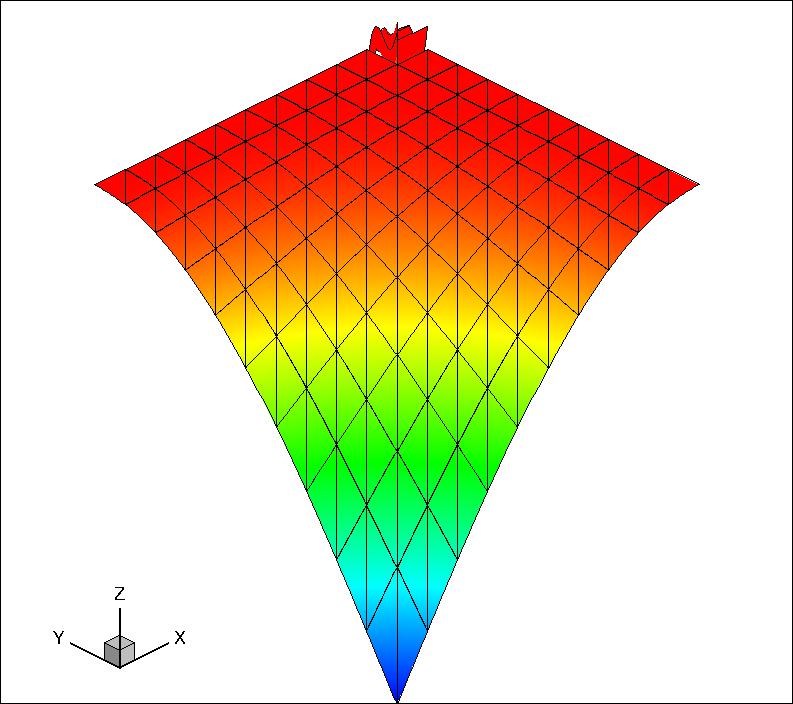}
\caption{3D plot of the exact solution and $u_h$ for the boundary layer test with $\epsilon = 10^{-6}$ in 128 elements. Top center: the exact solution.
 Left--Right: $HDG1$, $HDG2$, $HDG3$.
Top--Bottom: $P_0$--$P_3$.}
\label{bdry61} 
\end{figure}

\begin{table}[htbp]
\footnotesize
  \begin{center}
\scalebox{0.9}{%
    $\begin{array}{|c|c||c c||c c|}
    \hline
 \phantom{\big|} \mbox{degree} & \mbox{mesh} & \multicolumn{2}{|c||}{\epsilon=10^{-2}\phantom{\big|}} &\multicolumn{2}{|c|}{
\epsilon=10^{-6}\phantom{\big|}}
\\
\cline{3-6}
 \phantom{\big|}  k& h^{-1} & \mbox{error}  & \mbox{order} & \mbox{error}  & \mbox{order}\\
\hline
%& & \multicolumn{2}{|c|}{HDG-1} &  \multicolumn{2}{|c|}{HDG-1}
%\\
%\hline
  & 10  &  3.61\mbox{e-}2  &  --     &  3.32\mbox{e-}2  &  --      \\
  & 20  &  1.81\mbox{e-}2  &  0.99   &  1.67\mbox{e-}2  &  1.00   \\
 0& 40  &  9.06\mbox{e-}3  &  1.00   &  8.34\mbox{e-}3  &  1.00   \\
  & 80  &  4.52\mbox{e-}3  &  1.00   &  4.17\mbox{e-}3  &  1.00  \\
\hline
  & 10  &  4.22\mbox{e-}3  &  --     &  1.20\mbox{e-}3  &  --    \\
  & 20  &  8.54\mbox{e-}4  &  2.30   &  3.00\mbox{e-}4  &  2.00  \\
 1& 40  &  2.13\mbox{e-}4  &  2.00   &  7.51\mbox{e-}5  &  2.00  \\
  & 80  &  5.30\mbox{e-}5  &  2.01   &  1.88\mbox{e-}5  &  2.00  \\
\hline
  & 10  &  1.48\mbox{e-}3  &  --     &  1.90\mbox{e-}5  &  --     \\
  & 20  &  6.66\mbox{e-}5  &  4.47   &  2.37\mbox{e-}6  &  3.00  \\
 2& 40  &  8.19\mbox{e-}6  &  3.02   &  2.96\mbox{e-}7  &  3.00   \\
  & 80  &  1.03\mbox{e-}6  &  3.00   &  3.70\mbox{e-}8  &  3.00 \\
\hline
  & 10  &  4.10\mbox{e-}4  &  --     &  3.17\mbox{e-}7  &  --    \\
  & 20  &  5.35\mbox{e-}6  &  6.26   &  1.99\mbox{e-}8  &  3.99   \\
 3& 40  &  3.56\mbox{e-}7  &  3.91   &  1.25\mbox{e-}9  &  4.00 \\
  & 80  &  2.27\mbox{e-}8  &  3.97   &  7.79\mbox{e-}11  &  4.00 \\
\hline
 \end{array} $
}
\end{center}{$\phantom{|}$}
     \caption{History of convergence of $HDG1$ for $\|u - u_h\|_{L^2(\widetilde{\Omega})}$ when $\epsilon = 10^{-2}$ and $\epsilon = 10^{-6}$.}
   \label{bdry_order}
\end{table}

\subsection{The condition number}
Now, we present the condition number of the matrix generated by the original bilinear form $a_h$ in \eqref{hybrid-ee} and 
the scaled bilinear form $\widetilde{a}_h$ in \eqref{reduced_matrix2}. We use the same setup as that of the smooth test with two choices of 
$\bld \beta$. The first choice of $\bld \beta$ is $\bld\beta = [1,2]^T$. For this choice, assumption
\eqref{assump_cond} is satisfied by both example of $\tau$ in \eqref{tau1} and \eqref{tau2}. 
Since the condition numbers of all three HDG methods are very similar in our tests, we only present
that for $P_k$-$HDG1$ in Table~\ref{cond_1}. Notice that $\mathcal{O}(h^{-2})$ is observed for $\epsilon = 1,10^{-3},10^{-9}$ for different polynomial degrees, and that the condition number of 
the scaled system is similar to that of unscaled system.
The second choice of $\bld \beta$ is $\bld\beta = [1,1]^T$. For this choice, assumption 
\eqref{assump_cond} is satisfied by the second choice of $\tau$ in \eqref{tau2}, but not for the choice \eqref{tau1} since the mesh is aligned with $\bld\beta$. However, one can easily modify 
$\tau$ in $P_k$-$HDG1$ and $P_k$-$HDG3$ on the aligned faces so that \eqref{assump_cond} holds. We only present the condition numbers for $P_k$-$HDG2$ in Table~\ref{cond_2}. 
The dependence of the condition number on $\epsilon$ seems to be of order $\mathcal{O}({\epsilon^{-1}})$ for the original system, 
and we observe a huge improvement
of the condition number after scaling for $\epsilon = 10^{-9}$. Also, we find that the condition number for the scaled system is of order $\mathcal{O}(h^{-2})$ for $\epsilon = 1$, and 
of order $\mathcal{O}(h^{-1})$ for $\epsilon = 10^{-3},10^{-9}$, which is not predicted by our theory.
\begin{table}[htbp]
\footnotesize
  \begin{center}
\scalebox{0.9}{%
    $\begin{array}{|c|c||c|c|c|c|| c|c|c|c|}
    \hline
 \phantom{\big|} \mbox{$\epsilon$} & \mbox{mesh} & \multicolumn{4}{|c||}{\text{Condition numbers
for $a_h(\cdot,\cdot)$} \phantom{\big|}} & \multicolumn{4}{|c|}{\text{Condition numbers
for $\widetilde{a}_h(\cdot,\cdot)$} \phantom{\big|}} 
\\
\cline{3-10}
 \phantom{\big|}  & h^{-1} & \mbox{k=}0  & \mbox{k=}1 & \mbox{k=}2  & \mbox{k=}3& \mbox{k=}0  & \mbox{k=}1 & \mbox{k=}2  & \mbox{k=}3\\
\hline
%& & \multicolumn{2}{|c|}{HDG-1} &  \multicolumn{2}{|c|}{HDG-1}
%\\
%\hline
               & 5   &  1.11\mbox{e+}2   &  1.42\mbox{e+}2    &  2.82\mbox{e+}2 &  3.31\mbox{e+}2 &  2.49\mbox{e+}2   &  2.65\mbox{e+}2    &  5.30\mbox{e+}2 &  6.29\mbox{e+}2\\
               & 10   &  3.37\mbox{e+}2  &  5.28\mbox{e+}2    &  1.06\mbox{e+}3 &  1.24\mbox{e+}2 &  6.29\mbox{e+}2   &  9.83\mbox{e+}2    &  1.98\mbox{e+}3 &  2.34\mbox{e+}2 \\
 1\mbox{e-}0   & 20  &  1.37\mbox{e+}3   &  2.09\mbox{e+}3    &  4.17\mbox{e+}3 &  4.89\mbox{e+}2 &  2.55\mbox{e+}3   &  3.89\mbox{e+}3    &  7.78\mbox{e+}3 &  9.21\mbox{e+}2 \\
               & 40   &  5.50\mbox{e+}3  &  8.32\mbox{e+}3    &  1.66\mbox{e+}4 &  1.95\mbox{e+}2 &  1.03\mbox{e+}4   &  1.55\mbox{e+}4    &  3.10\mbox{e+}4 &  3.67\mbox{e+}2 \\
\hline
               & 5   &  4.86\mbox{e+}1   &  1.60\mbox{e+}2    &  3.64\mbox{e+}2  &  5.00\mbox{e+}2  &  3.71\mbox{e+}1   &  1.12\mbox{e+}2    &  2.44\mbox{e+}2 &  3.40\mbox{e+}2 \\
               & 10   &  1.74\mbox{e+}2   &  5.23\mbox{e+}2    &  8.79\mbox{e+}2 &  1.27\mbox{e+}3  &  1.36\mbox{e+}2   &  3.75\mbox{e+}2    &  6.51\mbox{e+}2 &  8.48\mbox{e+}2 \\
 1\mbox{e-}3   & 20   &  6.48\mbox{e+}2   &  1.82\mbox{e+}3    &  2.35\mbox{e+}3 &  4.06\mbox{e+}3  &  5.06\mbox{e+}2   &  1.25\mbox{e+}3    &  1.17\mbox{e+}3 &  2.50\mbox{e+}3 \\
               & 40   &  2.48\mbox{e+}3   &  6.94\mbox{e+}3    &  8.76\mbox{e+}3 &  1.49\mbox{e+}4  &  1.94\mbox{e+}3   &  4.44\mbox{e+}3    &  5.98\mbox{e+}3 &  8.96\mbox{e+}3 \\
\hline
                & 5   &  4.90\mbox{e+}1   &  1.69\mbox{e+}2    &  4.21\mbox{e+}2 &  5.76\mbox{e+}2  &  3.73\mbox{e+}1   &  9.57\mbox{e+}1    &  2.97\mbox{e+}2 &  3.80\mbox{e+}2\\
               & 10   &  1.74\mbox{e+}2   &  5.38\mbox{e+}2    &  1.17\mbox{e+}3 &  1.52\mbox{e+}3  &  1.36\mbox{e+}2   &  4.04\mbox{e+}2    &  9.11\mbox{e+}2 &  1.05\mbox{e+}3 \\
 1\mbox{e-}9   & 20   &  6.49\mbox{e+}2   &  1.94\mbox{e+}3    &  3.76\mbox{e+}3 &  4.73\mbox{e+}3  &  5.05\mbox{e+}2   &  1.46\mbox{e+}3    &  2.94\mbox{e+}3 &  3.40\mbox{e+}3 \\
               & 40   &  2.50\mbox{e+}3   &  7.06\mbox{e+}3    &  1.34\mbox{e+}4 &  1.65\mbox{e+}4  &  1.93\mbox{e+}3   &  5.37\mbox{e+}3    &  9.45\mbox{e+}3 &  1.25\mbox{e+}4\\
\hline
 \end{array} $
}
\end{center}{$\phantom{|}$}
     \caption{Condition numbers for $HDG1$ when $\bld\beta =[1,2]^T$.}
   \label{cond_1}
\end{table}

\begin{table}[htbp]
\footnotesize
  \begin{center}
\scalebox{0.9}{%
    $\begin{array}{|c|c||c|c|c|c|| c|c|c|c|}
    \hline
 \phantom{\big|} \mbox{$\epsilon$} & \mbox{mesh} & \multicolumn{4}{|c||}{\text{Condition numbers
for $a_h(\cdot,\cdot)$} \phantom{\big|}} & \multicolumn{4}{|c|}{\text{Condition numbers
for $\widetilde{a}_h(\cdot,\cdot)$} \phantom{\big|}} 
\\
\cline{3-10}
 \phantom{\big|}  & h^{-1} & \mbox{k=}0  & \mbox{k=}1 & \mbox{k=}2  & \mbox{k=}3& \mbox{k=}0  & \mbox{k=}1 & \mbox{k=}2  & \mbox{k=}3\\
\hline
%& & \multicolumn{2}{|c|}{HDG-1} &  \multicolumn{2}{|c|}{HDG-1}
%\\
%\hline
               & 5    &  9.99\mbox{e+}1   &  1.41\mbox{e+}2    &  2.82\mbox{e+}2 &  3.31\mbox{e+}2 &  1.16\mbox{e+}2   &  1.96\mbox{e+}2    &  3.92\mbox{e+}2 &  4.64\mbox{e+}2\\
               & 10   &  3.40\mbox{e+}2   &  5.34\mbox{e+}2    &  1.07\mbox{e+}3 &  1.25\mbox{e+}2 &  4.35\mbox{e+}2   &  6.82\mbox{e+}2    &  1.37\mbox{e+}3 &  1.61\mbox{e+}2 \\
 1\mbox{e-}0   & 20   &  1.38\mbox{e+}3   &  2.12\mbox{e+}3    &  4.23\mbox{e+}3 &  4.96\mbox{e+}2 &  1.77\mbox{e+}3   &  2.71\mbox{e+}3    &  5.41\mbox{e+}3 &  6.38\mbox{e+}2 \\
               & 40   &  5.57\mbox{e+}3   &  8.44\mbox{e+}3    &  1.68\mbox{e+}4 &  1.98\mbox{e+}2 &  7.12\mbox{e+}3   &  1.08\mbox{e+}4    &  2.15\mbox{e+}4 &  2.54\mbox{e+}2 \\
\hline
               & 5    &  2.01\mbox{e+}2   &  6.63\mbox{e+}2    &  9.06\mbox{e+}2 &  1.55\mbox{e+}3  &  2.48\mbox{e+}2   &  1.81\mbox{e+}3    &  7.02\mbox{e+}3 &  9.83\mbox{e+}3 \\
               & 10   &  4.00\mbox{e+}2   &  1.41\mbox{e+}3    &  1.62\mbox{e+}3 &  3.27\mbox{e+}3  &  5.63\mbox{e+}2   &  2.34\mbox{e+}3    &  7.45\mbox{e+}3 &  1.16\mbox{e+}4 \\
 1\mbox{e-}3   & 20   &  1.25\mbox{e+}3   &  4.42\mbox{e+}3    &  4.98\mbox{e+}3 &  9.57\mbox{e+}3  &  1.20\mbox{e+}3   &  3.99\mbox{e+}3    &  9.01\mbox{e+}3 &  1.66\mbox{e+}4 \\
               & 40   &  4.69\mbox{e+}3   &  1.63\mbox{e+}4    &  1.86\mbox{e+}4 &  3.12\mbox{e+}4  &  2.60\mbox{e+}3   &  7.29\mbox{e+}3    &  1.32\mbox{e+}4 &  2.49\mbox{e+}4 \\
\hline
                & 5   &  1.43\mbox{e+}8   &  4.02\mbox{e+}8    &  6.04\mbox{e+}8 &  7.34\mbox{e+}8  &  2.38\mbox{e+}2   &  2.07\mbox{e+}3    &  1.23\mbox{e+}4 &  2.49\mbox{e+}4\\
               & 10   &  1.31\mbox{e+}8   &  3.84\mbox{e+}8    &  5.68\mbox{e+}8 &  7.03\mbox{e+}8  &  5.35\mbox{e+}2   &  4.66\mbox{e+}3    &  2.77\mbox{e+}4 &  5.60\mbox{e+}4 \\
 1\mbox{e-}9   & 20   &  1.25\mbox{e+}8   &  3.75\mbox{e+}8    &  5.51\mbox{e+}8 &  6.87\mbox{e+}8  &  1.13\mbox{e+}3   &  9.84\mbox{e+}3    &  5.85\mbox{e+}4 &  1.12\mbox{e+}5 \\
               & 40   &  1.22\mbox{e+}8   &  3.70\mbox{e+}8    &  5.42\mbox{e+}8 &  6.79\mbox{e+}8  &  2.32\mbox{e+}3   &  2.02\mbox{e+}4    &  1.20\mbox{e+}5 &  2.30\mbox{e+}5\\
\hline
 \end{array} $
}
\end{center}{$\phantom{|}$}
     \caption{Condition numbers for $HDG2$ when $\bld\beta =[1,1]^T$.}
   \label{cond_2}
\end{table}

%{\bf Acknowledgements}. The work of Weifeng Qiu was supported by City University of Hong Kong under start-up grant (No. 7200324).

{\bf Acknowledgements}. The authors would like to thank Professor Bernardo Cockburn 
for constructive criticism leading to a better presentation of the material in this paper. The authors would also like to 
thank one of the referees for pointing out the paper by Egger and Sch\"oberl \cite{Egger2010} and 
for suggesting to comment on the relation of the HDG method to the MH--DG method in \cite{Egger2010}, as was done in Appendix~\ref{appendix-1}.
The work of Weifeng Qiu was supported by City University of Hong Kong under start-up grant (No. 7200324).

\appendix
\section{The relation between the HDG method %\eqref{cd_hdg_eqs} 
and the MH--DG method in \cite{Egger2010}}
\label{appendix-1}
In this section, we establish the relation between the HDG method \eqref{cd_hdg_eqs} and  the MH--DG considered in \cite{Egger2010}. We first present the  MH--DG method for equations \eqref{cd_eqs} under the condition that $g= 0$ and $\bld \beta \in H(\mathrm{div};\Omega)$ is
 constant in each element. Then, we show that this method coincides with the HDG method \eqref{cd_hdg_eqs} when using the same approximation spaces and
choosing the stability function $\tau$ to be \eqref{tau1}.
%\subsection{The MH--DG method}

The MH--DG method seeks an approximation $(\bld q_h,u_h,\lambda_h)\in \widetilde{\bld V}_h\times
W_h\times M_h(0)$ so that 
\begin{align}
\label{mhdg}
B_h((\bld q_h,u_h,\lambda_h), (\bld r, w,\mu)) = (f,w)_{\Oh},
\end{align}
for all $(\bld r, w,\mu)\in  \widetilde{\bld V}_h\times
W_h\times M_h(0)$,
where $W_h$ and $M_h(0)$ is defined in \eqref{fem_spaces} and $\widetilde{\bld V}_h$ is the so called Raviart--Thomas space, slightly larger than $\bld V_h$,
 defined as follows
\begin{align*}
\widetilde{\bld V}_{h}&=\{\boldsymbol{r}\in L^{2}(\Omega;\mathbb{R}^{d}):\boldsymbol{r}|_{K}\in P_{k}(K;\mathbb{R}^{d}) + \bld x P_k(K)
\quad \forall K\in \mathcal{T}_{h}\},
\end{align*} 
and 
\begin{align*}
B_h((\boldsymbol{q},u,\lambda),(\boldsymbol{r},w,\mu))%\\
= & \;(\epsilon^{-1}\boldsymbol{q},\boldsymbol{r})_{\mathcal{T}_{h}}-(u,\nabla\cdot\boldsymbol{r})_{\mathcal{T}_{h}}
+\langle \lambda, \boldsymbol{r}\cdot \boldsymbol{n}\rangle_{\partial\mathcal{T}_{h}}\\
& -(\boldsymbol{q}+\boldsymbol{\beta}u,\nabla w)_{\mathcal{T}_{h}}
+\langle \bld q\cdot \boldsymbol{n}
+ \bld\beta\cdot\bld n \{\lambda/u\},w - \mu\rangle_{\partial\mathcal{T}_{h}},
\end{align*}
where
\[
 \{\lambda / u\} : = \left\{\begin{tabular}{c c}
                             $\lambda$, &\quad\text{ if } $\bld \beta\cdot\bld n <0$,\\
                             $u$, &\quad\text{ if } $\bld \beta\cdot\bld n \ge 0$.\\
                            \end{tabular}
\right.
\]

Comparing the bilinear form for the HDG method \eqref{bilinear_form} with approximation spaces 
$ \widetilde{\bld V}_h\times
W_h\times M_h(0)$ and the stability function $\tau$ in \eqref{tau1} and that for the MH--DG method
in \eqref{mhdg}, we notice that the only difference lies in the definition of the numerical flux 
$(\widehat{\bld q}_h +\widehat{\bld\beta u_h})\cdot \bld n$ and $\bld q_h \cdot \boldsymbol{n}
+ \bld\beta\cdot\bld n \{\lambda/u\}$.
However, by the following simple calculation, we observe that the two numerical flux are actually the same:
\begin{align*}
(\widehat{\bld q}_h +\widehat{\bld\beta u_h} )\cdot \bld n& = \;{\bld q}_h\cdot \bld n +{\bld\beta\cdot \bld n \lambda_h} + \tau (u_h-\lambda_h)\\
& = \;{\bld q}_h\cdot \bld n +{\bld\beta\cdot \bld n \lambda_h} + \max(\bld \beta\cdot \bld n,0) ( u_h-\lambda_h) \\
& = \; \left\{\begin{tabular}{c c}
                             $\bld q_h\cdot \bld n+ \bld\beta\cdot\bld n \lambda_h$, &\quad\text{ if } $\bld \beta\cdot\bld n <0$\\
                             $\bld q_h\cdot \bld n+ \bld\beta\cdot\bld n u_h$, &\quad\text{ if } $\bld \beta\cdot\bld n \ge 0$
                            \end{tabular}\right.\\
&=\; \bld q_h\cdot\bld n+\bld\beta\cdot\bld n \{\lambda_h/u_h\}.
\end{align*}
Hence, these two methods coincide.

\section{Conditioning of the HDG methods}
\label{appendix}
In this section, we give a proof of Theorem~\ref{Thm_conditioning}. Again, the key idea is to 
recover an estimate of the $L^2$--norm of $u_h$, see the estimate \eqref{low_bound1} below.
By similar argument in the proof of Lemma~\ref{lemma_practical_infsup}, 
%for HDG methods with the second example of stabilization function,
we have the following local energy estimate:

\begin{lemma}
\label{lemma_local_energy_estimates}
If $\epsilon \leq \mathcal{O}(h)$, then there is $h_{0}>0$, which is independent of $\epsilon$ and $h$, 
such that for any $\lambda\in M_{h}(0)$ and 
$K\in\mathcal{T}_{h}$,
\begin{align}
\label{local_energy_estimates}
\epsilon^{-1/2}\Vert \boldsymbol{q}_{h}^{\lambda}\Vert_{K}+ \Vert u_{h}^{\lambda}\Vert_{K}
+\Vert \vert\tau-\frac{1}{2}\boldsymbol{\beta}\cdot\boldsymbol{n}\vert^{1/2}u_{h}^{\lambda} \Vert_{\partial K}
\leq C \Vert \vert\tau-\frac{1}{2}\boldsymbol{\beta}\cdot\boldsymbol{n}\vert^{1/2}\lambda\Vert_{\partial K},
\end{align}
if $h< h_{0}$. 
\end{lemma}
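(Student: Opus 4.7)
My plan is to adapt the global stability argument of Lemma~\ref{lemma_ideal_infsup} and Lemma~\ref{lemma_practical_infsup} to a single element $K$. The starting point is to observe that, after integrating by parts in \eqref{local2}, the local problem is equivalent to $B_K((\bld q_h^\lambda, u_h^\lambda, \lambda),(\bld r, w, 0)) = 0$ for all $(\bld r, w)\in P_k(K;\mathbb{R}^d)\times P_k(K)$, where $B_K$ denotes the element-restricted version of the bilinear form $B$ in \eqref{bilinear_form}. This reformulation lets me recycle the computations of Section~\ref{analysis} at the element level.

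The first move is to insert formally the weighted test pair $(\varphi\bld q_h^\lambda, \varphi u_h^\lambda)$ with $\varphi = e^{-\psi}+\chi$ as in \eqref{weight_function}, and mimic the manipulations from the proof of Lemma~\ref{lemma_ideal_infsup} element by element. I expect to obtain an identity whose left-hand side is bounded below by $C\bigl[\epsilon^{-1}\|\bld q_h^\lambda\|_K^2 + \|u_h^\lambda\|_K^2 + \||\tau-\tfrac12\bld\beta\cdot\bld n|^{1/2}(u_h^\lambda-\lambda)\|_{\dK}^2\bigr]$, and whose right-hand side collects the boundary residuals $\langle\varphi\lambda,\bld q_h^\lambda\cdot\bld n\rangle_{\dK}$, $\tfrac12\langle\bld\beta\cdot\bld n\,\varphi,\lambda^2\rangle_{\dK}$, and $\langle\tau\varphi(u_h^\lambda-\lambda),\lambda\rangle_{\dK}$---terms that would cancel globally (using $\lambda_h\in M_h(0)$ and single-valuedness on interior faces) but persist locally because $\lambda$ is prescribed on $\dK$. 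Since $(\varphi\bld q_h^\lambda,\varphi u_h^\lambda)$ is not discrete, I would then substitute the projected pair $(\pv(\varphi\bld q_h^\lambda),\varPi_h(\varphi u_h^\lambda))$ and control the discrepancy by Lemma~\ref{lemma_non_constant_ineqs}; exactly as in Lemma~\ref{lemma_practical_infsup}, the perturbation picks up a factor $h_K^{1/2}$ or $h_K$ and is absorbed for $h<h_0$.

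The core step, and the main obstacle, is bounding the three boundary residuals by $C\||\tau-\tfrac12\bld\beta\cdot\bld n|^{1/2}\lambda\|_{\dK}^2$ modulo terms absorbable on the left. The $\bld\beta\cdot\bld n\,\lambda^2$ and $\tau(u_h^\lambda-\lambda)\lambda$ contributions are routine once we use $|\bld\beta\cdot\bld n|\le C|\tau-\tfrac12\bld\beta\cdot\bld n|$ from \eqref{assumption_tau_1} together with Young's inequality. The delicate one is $|\langle\varphi\lambda,\bld q_h^\lambda\cdot\bld n\rangle_{\dK}|$: the discrete trace inequality yields $\|\bld q_h^\lambda\cdot\bld n\|_{\dK}\le Ch_K^{-1/2}\|\bld q_h^\lambda\|_K$, so matching it against $\epsilon^{-1/2}\|\bld q_h^\lambda\|_K$ on the left requires $h_K^{-1/2}\|\lambda\|_{\dK}\le C\epsilon^{-1/2}\||\tau-\tfrac12\bld\beta\cdot\bld n|^{1/2}\lambda\|_{\dK}$. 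This is exactly where the hypothesis $\epsilon\le\mathcal{O}(h)$ and the sharp lower bound $\tau-\tfrac12\bld\beta\cdot\bld n\ge C\min(\epsilon/h_F,1)$ from \eqref{assump_cond} enter: in the regime $\epsilon/h_F\le 1$ they give $\|\lambda\|_{\dK}^2\le C(h/\epsilon)\||\tau-\tfrac12\bld\beta\cdot\bld n|^{1/2}\lambda\|_{\dK}^2$, and a weighted Young inequality splits the cross term into $\tfrac14\epsilon^{-1}\|\bld q_h^\lambda\|_K^2$ (absorbed on the left) plus $C\||\tau-\tfrac12\bld\beta\cdot\bld n|^{1/2}\lambda\|_{\dK}^2$. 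The bound on $\||\tau-\tfrac12\bld\beta\cdot\bld n|^{1/2}u_h^\lambda\|_{\dK}$ then follows from the triangle inequality applied to $u_h^\lambda=(u_h^\lambda-\lambda)+\lambda$.
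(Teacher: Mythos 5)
Your proposal is correct and follows essentially the route the paper intends: the paper gives no detailed proof of this lemma, asserting only that it follows ``by similar argument in the proof of Lemma~\ref{lemma_practical_infsup}'', and your element-level weighted-test-function argument is a faithful realization of that sketch. In particular, you correctly identify the three boundary residuals that cancel globally but survive on a single element, and you correctly pinpoint that absorbing $\langle\varphi\lambda,\boldsymbol{q}_h^\lambda\cdot\boldsymbol{n}\rangle_{\partial K}$ is exactly where $\epsilon\leq\mathcal{O}(h)$ and the lower bound \eqref{assump_cond} (which is indeed in force throughout Appendix~\ref{appendix}, being a hypothesis of Theorem~\ref{Thm_conditioning}) must be used.
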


For all $(\boldsymbol{\sigma},v,\lambda), (\boldsymbol{r},w,\mu)\in H^{1}(\mathcal{T}_{h};\mathbb{R}^{d})\times H^{1}(\mathcal{T}_{h})\times L^{2}(\mathcal{E}_{h})$, we define 
\begin{align*}
%\label{reduced_bilinear _form}
b_{h}((\boldsymbol{\sigma},v,\lambda), (\boldsymbol{r},w,\mu)) %\\
%\nonumber
= &\; (\epsilon^{-1}\boldsymbol{\sigma},\boldsymbol{r})_{\mathcal{T}_{h}}
+\langle \tau (v-\lambda),w - \mu \rangle_{\partial\mathcal{T}_{h}}\\
%\nonumber
&-(\boldsymbol{\beta}v,\nabla w)_{\mathcal{T}_{h}} 
+ \langle (\boldsymbol{\beta}\cdot\boldsymbol{n})\lambda, w - \mu \rangle_{\partial\mathcal{T}_{h}}
-((\nabla\cdot \boldsymbol{\beta})v, w)_{\mathcal{T}_{h}},
\end{align*}
The next result is similar to Lemma~\ref{lemma_practical_infsup}.
%, we have Lemma~\ref{low_bound_reduced_system} in the following.
\begin{lemma}
\label{low_bound_reduced_system}
If $\epsilon \leq \mathcal{O}(h)$, then there is $h_{0}>0$, which is independent of $\epsilon$ and $h$, 
such that for any $\lambda\in M_{h}(0)$,
\begin{align*}
%\label{low_bound_reduced_system_eq}
&\epsilon^{-1}\Vert \boldsymbol{q}_{h}^\lambda\Vert_{\mathcal{T}_{h}}^{2}+ \Vert u_{h}^\lambda\Vert_{\mathcal{T}_{h}}^{2}
+\Vert \vert \tau -\frac{1}{2}\boldsymbol{\beta}\cdot\boldsymbol{n}\vert^{1/2}
(u_{h}^\lambda -\lambda)\Vert_{\partial\mathcal{T}_{h}}^{2} \\
%\nonumber 
\leq & C b_{h}((\boldsymbol{q}_{h}^{\lambda},u_{h}^{\lambda},\lambda), ( \boldsymbol{q}_{h}^{(P_{0,M}\varphi)\lambda},
u_{h}^{(P_{0,M}\varphi)\lambda}, (P_{0,M}\varphi)\lambda)),
\end{align*}
if $h< h_{0}$. Here, the weight function $\varphi = e^{-\psi}+\chi$ is introduced in (\ref{weight_function}). 
$P_{0,M}$ is the $L^{2}$--orthogonal projection onto $P_{0}(\mathcal{E}_{h})$. 
\end{lemma}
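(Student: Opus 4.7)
The plan is to mimic the stability argument of Lemma~\ref{lemma_ideal_infsup} adapted to the reduced bilinear form $b_h$: first prove a lower bound using the \emph{ideal} weighted triple $(\varphi\boldsymbol{q}_h^\lambda,\varphi u_h^\lambda,\varphi\lambda)$, then pass to the admissible discrete test triple $(\boldsymbol{q}_h^{(P_{0,M}\varphi)\lambda},u_h^{(P_{0,M}\varphi)\lambda},(P_{0,M}\varphi)\lambda)$ via a perturbation argument driven by the local energy estimate of Lemma~\ref{lemma_local_energy_estimates}.

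For the ideal step, I would expand $b_h((\boldsymbol{q}_h^\lambda,u_h^\lambda,\lambda),(\varphi\boldsymbol{q}_h^\lambda,\varphi u_h^\lambda,\varphi\lambda))$ and integrate the convective term $-(\boldsymbol{\beta}u_h^\lambda,\nabla(\varphi u_h^\lambda))_{\mathcal{T}_h}$ by parts exactly as in the proof of Lemma~\ref{lemma_ideal_infsup}. The identity $\nabla\varphi=-e^{-\psi}\nabla\psi$, the single-valuedness of $\lambda\in M_h(0)$ on interior faces (which kills $\langle\boldsymbol{\beta}\cdot\boldsymbol{n}\lambda,\varphi\lambda\rangle_{\partial\mathcal{T}_h}$), and assumption \eqref{assump_beta1} reduce the expression to the same collection of positive terms appearing in Lemma~\ref{lemma_ideal_infsup}. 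Invoking \eqref{beta_assumps}, applying Cauchy--Schwarz with $\delta=b_0/2$ to the cross term $(u_h^\lambda,e^{-\psi}\nabla\psi\cdot\boldsymbol{q}_h^\lambda)_{\mathcal{T}_h}$, and taking $\chi\ge 1+2b_0^{-1}\|e^{-\psi}\|_{L^\infty}\|\nabla\psi\|_{L^\infty}^2$, yields
\begin{equation*}
b_h\bigl((\boldsymbol{q}_h^\lambda,u_h^\lambda,\lambda),(\varphi\boldsymbol{q}_h^\lambda,\varphi u_h^\lambda,\varphi\lambda)\bigr)\;\ge\;C\Bigl(\epsilon^{-1}\|\boldsymbol{q}_h^\lambda\|_{\mathcal{T}_h}^2+\|u_h^\lambda\|_{\mathcal{T}_h}^2+\bigl\|\,|\tau-\tfrac12\boldsymbol{\beta}\cdot\boldsymbol{n}|^{1/2}(u_h^\lambda-\lambda)\bigr\|_{\partial\mathcal{T}_h}^2\Bigr).
\end{equation*}

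Next, set $\mu:=(P_{0,M}\varphi)\lambda\in M_h(0)$ and decompose $b_h((\boldsymbol{q}_h^\lambda,u_h^\lambda,\lambda),(\boldsymbol{q}_h^\mu,u_h^\mu,\mu))$ as the ideal value above plus the defect $E:=b_h((\boldsymbol{q}_h^\lambda,u_h^\lambda,\lambda),(\boldsymbol{q}_h^\mu-\varphi\boldsymbol{q}_h^\lambda,u_h^\mu-\varphi u_h^\lambda,\mu-\varphi\lambda))$. Since $|P_{0,M}\varphi-\varphi|\le Ch_F$ pointwise on each face, the skeleton perturbation satisfies $\bigl\|\,|\tau-\tfrac12\boldsymbol{\beta}\cdot\boldsymbol{n}|^{1/2}(\mu-\varphi\lambda)\bigr\|_{\partial K}\le Ch_K\bigl\|\,|\tau-\tfrac12\boldsymbol{\beta}\cdot\boldsymbol{n}|^{1/2}\lambda\bigr\|_{\partial K}$. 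For the volume perturbation, linearity of the local problem (extended to the smooth datum $\varphi\lambda$) allows the splitting $\boldsymbol{q}_h^\mu-\varphi\boldsymbol{q}_h^\lambda = \boldsymbol{q}_h^{(P_{0,M}\varphi-\varphi)\lambda}+(\boldsymbol{q}_h^{\varphi\lambda}-\varphi\boldsymbol{q}_h^\lambda)$, and analogously for $u$. Lemma~\ref{lemma_local_energy_estimates} applied to $(P_{0,M}\varphi-\varphi)\lambda$ gives $O(h)$ control of the first piece. Cauchy--Schwarz in the defining expression for $E$, combined with the inverse-trace bound $\|\lambda\|_{\partial K}\le C(h_K^{-1/2}\|u_h^\lambda\|_K+\|u_h^\lambda-\lambda\|_{\partial K})$, then produces $|E|\le Ch^{1/2}\cdot(\text{ideal lower bound})$, so that the defect is absorbed once $h<h_0$.

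The main obstacle will be the second piece $\boldsymbol{q}_h^{\varphi\lambda}-\varphi\boldsymbol{q}_h^\lambda$ and $u_h^{\varphi\lambda}-\varphi u_h^\lambda$: the pair $(\varphi\boldsymbol{q}_h^\lambda,\varphi u_h^\lambda)$ does not lie in $\boldsymbol{V}(K)\times W(K)$ and does not satisfy the local HDG equations, so Lemma~\ref{lemma_local_energy_estimates} cannot be invoked on it directly. My resolution is to use the commutator estimates of Lemma~\ref{lemma_non_constant_ineqs} to bound $\|\pv(\varphi\boldsymbol{q}_h^\lambda)-\varphi\boldsymbol{q}_h^\lambda\|_K$ and $\|\varPi_h(\varphi u_h^\lambda)-\varphi u_h^\lambda\|_K$ (and their boundary analogues) by $Ch_K(\|\boldsymbol{q}_h^\lambda\|_K+\|u_h^\lambda\|_K)$, and then to apply Lemma~\ref{lemma_local_energy_estimates} to the \emph{discrete} difference $(\boldsymbol{q}_h^{\varphi\lambda}-\pv(\varphi\boldsymbol{q}_h^\lambda),u_h^{\varphi\lambda}-\varPi_h(\varphi u_h^\lambda))$, which satisfies a local HDG-type problem with right-hand side coming from the commutator error. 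The condition $\epsilon\le\mathcal{O}(h)$ enters exactly as in Lemma~\ref{lemma_local_energy_estimates}, ensuring that the $\epsilon$-weighted flux terms can be absorbed and that the resulting $h^{1/2}$ smallness closes the argument.
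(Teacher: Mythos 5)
Your overall skeleton (ideal weighted test triple as in Lemma~\ref{lemma_ideal_infsup}, then a perturbation argument to reach the admissible triple $(\boldsymbol{q}_h^{(P_{0,M}\varphi)\lambda}, u_h^{(P_{0,M}\varphi)\lambda},(P_{0,M}\varphi)\lambda)$) matches the paper's Steps (I)--(V), and both your ideal step and your treatment of the skeleton perturbation $\mu-\varphi\lambda$ are sound. The gap sits exactly where you flag the ``main obstacle'': your decomposition routes the volume perturbation through $\boldsymbol{q}_h^{\varphi\lambda}-\varphi\boldsymbol{q}_h^{\lambda}$ and $u_h^{\varphi\lambda}-\varphi u_h^{\lambda}$, i.e.\ through the local solve with the \emph{non-constant} datum $\varphi\lambda$, and your proposed resolution --- apply Lemma~\ref{lemma_local_energy_estimates} to $(\boldsymbol{q}_h^{\varphi\lambda}-\pv(\varphi\boldsymbol{q}_h^{\lambda}),\,u_h^{\varphi\lambda}-\varPi_h(\varphi u_h^{\lambda}))$ on the grounds that it ``satisfies a local HDG-type problem with commutator right-hand side'' --- is asserted rather than established. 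Lemma~\ref{lemma_local_energy_estimates} as stated covers only local solutions driven by trace data with $f=0$; to use it on your difference you would need (i) to compute the residual of $(\pv(\varphi\boldsymbol{q}_h^{\lambda}),\varPi_h(\varphi u_h^{\lambda}))$ in \emph{both} local equations \eqref{local1}--\eqref{local2}, including the $\epsilon^{-1}$-weighted term and the boundary contributions, and to decide which trace datum ($\varphi\lambda$ or its $M_h$-projection) the perturbed problem carries, and (ii) a version of the local energy estimate admitting general volume and boundary source terms. Neither is carried out, and this is precisely the technical heart of the lemma.

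The paper sidesteps this difficulty entirely: it never multiplies the local solutions by the full weight $\varphi$ in the final test function, but only by the elementwise-constant weight $P_{0,h}\varphi$, for which the commutation with the local solver is \emph{exact} by linearity: $(P_{0,h}\varphi)\boldsymbol{q}_h^{\lambda}=\boldsymbol{q}_h^{(P_{0,h}\varphi)\lambda}$ and $(P_{0,h}\varphi)u_h^{\lambda}=u_h^{(P_{0,h}\varphi)\lambda}$ on each $K$. Consequently only the trace-data difference $((P_{0,h}-P_{0,M})\varphi)\lambda$, which is $O(h)$ on each $\partial K$, needs to be fed into Lemma~\ref{lemma_local_energy_estimates}, exactly in the form that lemma is stated. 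The remaining mismatch between $P_h(\varphi u_h^{\lambda})$ and $(P_{0,h}\varphi)u_h^{\lambda}$ in the scalar test slot is converted, via the local equation \eqref{local2}, into the single term $-(\nabla\cdot\boldsymbol{q}_h^{\lambda},\,P_h(\varphi u_h^{\lambda})-(P_{0,h}\varphi)u_h^{\lambda})_{\mathcal{T}_h}$, and this is where the hypothesis $\epsilon\le\mathcal{O}(h)$ is actually used, through the inverse estimate $\Vert\nabla\cdot\boldsymbol{q}_h^{\lambda}\Vert_{\mathcal{T}_h}\le C\epsilon^{-1/2}h^{-1/2}\Vert\boldsymbol{q}_h^{\lambda}\Vert_{\mathcal{T}_h}$. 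I recommend restructuring your perturbation step around this exact-commutation observation; it removes the need for any commutator analysis of the local solver and closes the gap.
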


\begin{remark}
Notice that in general, the space $\{(\boldsymbol{q}_{h}^{m}, u_{h}^{m},m):m\in M_{h}(0)\}$ is a non-trivial subspace of 
$\boldsymbol{V}_{h}\times W_{h}\times M_{h}(0)$.
Then, given $m\in M_{h}$, $(\boldsymbol{\Pi}_{h}(\varphi \boldsymbol{q}_{h}^{m}),P_{h}(\varphi  u_{h}^{m}), P_{M}(\varphi m))$ is {\em not} necessarily contained in 
 $\{(\boldsymbol{q}_{h}^{m}, u_{h}^{m},m):m\in M_{h}\}$. So, the proof of Lemma~\ref{low_bound_reduced_system} can {\em not} be derived from the stability 
 of HDG methods in Lemma~\ref{lemma_practical_infsup}.
\end{remark}

\begin{proof}
We accomplish the proof in the following steps.

(\textbf{I}) By the same argument in the proof of Lemma~\ref{lemma_ideal_infsup}, if $h$ is small enough (independent of $\epsilon$), then 
\begin{align*}
%\label{reduced_low_bound_ineq1}
&\epsilon^{-1}\chi \Vert \boldsymbol{q}_{h}^{\lambda}\Vert_{\mathcal{T}_{h}}^{2}+ \Vert u_{h}^{\lambda}\Vert_{\mathcal{T}_{h}}^{2}
+\chi \Vert \vert \tau -\frac{1}{2}\boldsymbol{\beta}\cdot\boldsymbol{n}\vert^{1/2}
(u_{h}^{\lambda} -\lambda)\Vert_{\partial\mathcal{T}_{h}}^{2} \\
%\nonumber
 \leq & C b_{h}((\boldsymbol{q}_{h}^{\lambda}, u_{h}^{\lambda},\lambda), (\varphi\boldsymbol{q}_{h}^{\lambda},
\varphi u_{h}^{\lambda}, \varphi\lambda)).
\end{align*}

(\textbf{II}) By similar argument in the proof of Lemma~\ref{lemma_practical_infsup}, if we choose $\chi$ big enough and $h$ small enough (both are independent of $\epsilon$), then 
\begin{align*}
%\label{reduced_low_bound_ineq2}
&\epsilon^{-1}\chi\Vert \boldsymbol{q}_{h}^{\lambda}\Vert_{\mathcal{T}_{h}}^{2}+ \Vert u_{h}^{\lambda}\Vert_{\mathcal{T}_{h}}^{2}
+\chi\Vert \vert \tau -\frac{1}{2}\boldsymbol{\beta}\cdot\boldsymbol{n}\vert^{1/2}
(\mathcal{U}\lambda -\lambda)\Vert_{\partial\mathcal{T}_{h}}^{2} \\
%\nonumber
 \leq & C b_{h}((\boldsymbol{q}_{h}^{\lambda}, u_{h}^{\lambda},\lambda), ( (P_{0,h} \varphi) \boldsymbol{q}_{h}^{\lambda},
P_{h}(\varphi u_{h}^{\lambda}), (P_{0,M}\varphi)\lambda)).
\end{align*}
Here, we define $P_{0,h}\varphi$ to be the average of $\varphi$ on every element $K\in\mathcal{T}_{h}$.

(\textbf{III}) Now, we want to bound $b_{h}((\boldsymbol{q}_{h}^{\lambda},u_{h}^{\lambda},\lambda), 
( (P_{0,h} \varphi)\boldsymbol{q}_{h}^{\lambda},
(P_{0,h} \varphi)u_{h}^{\lambda}, (P_{0,M}\varphi)\lambda))$ from below. Notice that 
\begin{align*}
& b_{h}((\boldsymbol{q}_{h}^{\lambda},u_{h}^{\lambda},\lambda), ( (P_{0,h} \varphi)\boldsymbol{q}_{h}^{\lambda},
(P_{0,h} \varphi) u_{h}^{\lambda}, (P_{0,M}\varphi)\lambda))\\
= & b_{h}((\boldsymbol{q}_{h}^{\lambda},u_{h}^{\lambda},\lambda), ( (P_{0,h} \varphi)\boldsymbol{q}_{h}^{\lambda},
P_{h}(\varphi u_{h}^{\lambda}), (P_{0,M}\varphi)\lambda)) \\
& + (\boldsymbol{\beta}\cdot\nabla u_{h}^{\lambda},P_{h}(\varphi u_{h}^{\mu})-(P_{0,h} \varphi)u_{h}^{\mu})_{\mathcal{T}_{h}}\\
& +\langle (\tau-\boldsymbol{\beta}\cdot\boldsymbol{n})(u_{h}^{\lambda}-\lambda),
P_{h}(\varphi u_{h}^{\mu})-(P_{0,h} \varphi)u_{h}^{\mu}\rangle_{\partial\mathcal{T}_{h}}.
\end{align*}
By \eqref{local2}, we have
\begin{align}
\label{reduce_system_key1}
& b_{h}((\boldsymbol{q}_{h}^{\lambda},u_{h}^{\lambda},\lambda), ( (P_{0,h} \varphi)\boldsymbol{q}_{h}^{\lambda},
(P_{0,h} \varphi) u_{h}^{\lambda}, (P_{0,M}\varphi)\lambda))\\
\nonumber
= & b_{h}((\boldsymbol{q}_{h}^{\lambda},u_{h}^{\lambda},\lambda), ( (P_{0,h} \varphi)\boldsymbol{q}_{h}^{\lambda},
P_{h}(\varphi u_{h}^{\lambda}), (P_{0,M}\varphi)\lambda)) \\
\nonumber
& - (\nabla\cdot\boldsymbol{q}_{h}^{\lambda},  P_{h}(\varphi u_{h}^{\mu})-(P_{0,h} \varphi)u_{h}^{\mu})_{\mathcal{T}_{h}}.
\end{align}
By inverse inequality to $\nabla\cdot\boldsymbol{q}_{h}^{\lambda}$ and assumption $\epsilon \leq \mathcal{O}(h)$, we have
\begin{equation*}
\Vert \nabla\cdot\boldsymbol{q}_{h}^{\lambda}\Vert_{\mathcal{T}_{h}}\leq C \epsilon^{-1/2}h^{-1/2}
\Vert \boldsymbol{q}_{h}^{\lambda}\Vert_{\mathcal{T}_{h}}.
\end{equation*}
In addition, we have
\begin{equation*}
\Vert P_{h}(\varphi u_{h}^{\mu})-(P_{0,h} \varphi)u_{h}^{\mu}\Vert_{\mathcal{T}_{h}}\leq C h \Vert u_{h}^{\mu}\Vert_{\mathcal{T}_{h}}.
\end{equation*}
So, if $h$ is small enough (independent of $\epsilon,\chi$), we have
\begin{align}
\label{reduced_low_bound_ineq3}
&\chi\epsilon^{-1}\Vert \boldsymbol{q}_{h}^{\lambda}\Vert_{\mathcal{T}_{h}}^{2}+ \Vert u_{h}^{\lambda}\Vert_{\mathcal{T}_{h}}^{2}
+\chi\Vert \vert \tau -\frac{1}{2}\boldsymbol{\beta}\cdot\boldsymbol{n}\vert^{1/2}
(u_{h}^{\lambda} -\lambda)\Vert_{\partial\mathcal{T}_{h}}^{2} \\
\nonumber
 \leq & C b_{h}((\boldsymbol{q}_{h}^{\lambda},u_{h}^{\lambda},\lambda), ( (P_{0,h} \varphi)\boldsymbol{q}_{h}^{\lambda},
(P_{0,h} \varphi)u_{h}^{\lambda}, (P_{0,M}\varphi)\lambda)).
\end{align}
%The constant $C$ in (\ref{reduced_low_bound_ineq3}) is independent of $\epsilon,h,\chi$.

(\textbf{IV}) Now, we want to bound $b_{h}((\boldsymbol{q}_{h}^{\lambda},u_{h}^{\lambda},\lambda), 
( (P_{0,h} \varphi)\boldsymbol{q}_{h}^{\lambda},
u_{h}^{(P_{0,M}\varphi)\lambda}, (P_{0,M}\varphi)\lambda))$ from below. Similar to (\ref{reduce_system_key1}), we have 
\begin{align}
\label{reduce_system_key2}
& b_{h}((\boldsymbol{q}_{h}^{\lambda},u_{h}^{\lambda},\lambda), ( (P_{0,h} \varphi)\boldsymbol{q}_{h}^{\lambda},
u_{h}^{(P_{0,M}\varphi)\lambda}, (P_{0,M}\varphi)\lambda))\\
\nonumber
= & b_{h}((\boldsymbol{q}_{h}^{\lambda}, u_{h}^{\lambda},\lambda), ( (P_{0,h} \varphi)\boldsymbol{q}_{h}^{\lambda},
(P_{0,h} \varphi)u_{h}^{\lambda}, (P_{0,M}\varphi)\lambda)) \\
\nonumber
& - (\nabla\cdot\boldsymbol{q}_{h}^{\lambda},  (P_{0,h} \varphi)u_{h}^{\lambda}-u_{h}^{(P_{0,M}\varphi)\lambda})_{\mathcal{T}_{h}}.
\end{align}
Since $\lambda\rightarrow u_{h}^{\lambda}$ is linear and \eqref{local_energy_estimates}, then for any $K\in\mathcal{T}_{h}$,
\begin{align*}
& \Vert (P_{0,h} \varphi)u_{h}^{\lambda}-u_{h}^{(P_{0,M}\varphi)\lambda}\Vert_{K} \\
= &\Vert u_{h}^{(P_{0,h} \varphi)\lambda}-u_{h}^{(P_{0,M}\varphi)\lambda}\Vert_{K}\\
\leq & C \Vert \vert\tau -\frac{1}{2}\boldsymbol{\beta}\cdot\boldsymbol{n}\vert^{1/2}
((P_{0,h} \varphi)\lambda-(P_{0,M}\varphi)\lambda)\Vert_{\partial K}\\
\leq & C h\Vert \vert\tau -\frac{1}{2}\boldsymbol{\beta}\cdot\boldsymbol{n}\vert^{1/2}\lambda \Vert_{\partial K}\\
\leq & C h^{1/2} \left( \Vert u_{h}^{\lambda}\Vert_{K}^{2}
+\Sigma_{F\in\mathcal{E}(K)}\Vert \vert \tau -\frac{1}{2}\boldsymbol{\beta}\cdot\boldsymbol{n}\vert^{1/2}
(u_{h}^{\lambda} -\lambda)\Vert_{F}^{2} \right)^{1/2}.
\end{align*}
Recall that by an inverse inequality to $\nabla\cdot\boldsymbol{q}_{h}^{\lambda}$ and assumption $\epsilon \leq \mathcal{O}(h)$,
\begin{align*}
\Vert \nabla\cdot\boldsymbol{q}_{h}^{\lambda}\Vert_{\mathcal{T}_{h}}\leq C \epsilon^{-1/2}h^{-1/2}
\Vert \boldsymbol{q}_{h}^{\lambda}\Vert_{\mathcal{T}_{h}}.
\end{align*}
So, if $\chi$ is big enough (independent of $\epsilon,h$), we have 
\begin{align*}
%\label{reduced_low_bound_ineq4}
&\epsilon^{-1}\Vert \boldsymbol{q}_{h}^{\lambda}\Vert_{\mathcal{T}_{h}}^{2}+ \Vert u_{h}^{\lambda}\Vert_{\mathcal{T}_{h}}^{2}
+\Vert \vert \tau -\frac{1}{2}\boldsymbol{\beta}\cdot\boldsymbol{n}\vert^{1/2}
(u_{h}^{\lambda} -\lambda)\Vert_{\partial\mathcal{T}_{h}}^{2} \\
%\nonumber
 \leq & C b_{h}((\boldsymbol{q}_{h}^{\lambda},u_{h}^{\lambda},\lambda), ( (P_{0,h} \varphi)\boldsymbol{q}_{h}^{\lambda},
u_{h}^{(P_{0,M}\varphi)\lambda}, (P_{0,M}\varphi)\lambda)).
\end{align*}
%The constant $C$ in (\ref{reduced_low_bound_ineq4}) is independent of $\epsilon,h$.

(\textbf{V}) By \eqref{local_energy_estimates}, \eqref{reduced_low_bound_ineq3}, \eqref{assump_cond} and the fact that 
$\lambda\rightarrow\boldsymbol{q}_{h}^{\lambda}$ is linear, we have
\begin{align*}
%\label{reduced_low_bound_ineq5}
&\epsilon^{-1}\Vert \boldsymbol{q}_{h}^{\lambda}\Vert_{\mathcal{T}_{h}}^{2}+ \Vert u_{h}^{\lambda}\Vert_{\mathcal{T}_{h}}^{2}
+\Vert \vert \tau -\frac{1}{2}\boldsymbol{\beta}\cdot\boldsymbol{n}\vert^{1/2}
(u_{h}^{\lambda} -\lambda)\Vert_{\partial\mathcal{T}_{h}}^{2} \\
%\nonumber 
\leq & C b_{h}((\boldsymbol{q}_{h}^{\lambda},u_{h}^{\lambda},\lambda), ( \boldsymbol{q}_{h}^{(P_{0,M}\varphi)\lambda},
u_{h}^{(P_{0,M}\varphi)\lambda}, (P_{0,M}\varphi)\lambda))
\end{align*}
if $h$ is small enough (independent of $\epsilon$).

So, we can conclude the proof is complete.
\end{proof}

Now, we are ready to prove Theorem~\ref{Thm_conditioning}.
%\begin{proof}
By the definition of $\tilde{a}_{h}$ in (\ref{reduced_matrix2}), 
\begin{align*}
& \tilde{a}_{h}(\tilde{\lambda},\tilde{\mu}) = a_{h}(\Lambda_{\epsilon}^{-1}\tilde{\lambda},\Lambda_{\epsilon}^{-1}\tilde{\mu}) \\
= & b_{h}((\boldsymbol{q}_{h}^{\Lambda_{\epsilon}^{-1}\tilde{\lambda}}, u_{h}^{\Lambda_{\epsilon}^{-1}\tilde{\lambda}},
\Lambda_{\epsilon}^{-1}\tilde{\lambda}), 
(\boldsymbol{q}_{h}^{\Lambda_{\epsilon}^{-1}\tilde{\mu}}, u_{h}^{\Lambda_{\epsilon}^{-1}\tilde{\mu}},\Lambda_{\epsilon}^{-1}\tilde{\mu})),
\end{align*}
for all $\tilde{\lambda},\tilde{\mu}\in M_{h}(0)$.

We recall that $\Lambda_{\epsilon}|_{F} = \left(\sup_{x\in F}\vert \boldsymbol{\beta}\cdot\boldsymbol{n}(x)\vert
+\min(\dfrac{\epsilon}{h_{F}},1) \right)^{1/2},\quad \forall F\in\mathcal{E}_{h}$ in (\ref{change_variables}).
By assumption \eqref{assump_cond} 
%the second example of stabilization function in (\ref{tau2}) 
and Lemma~\ref{low_bound_reduced_system}, we have that for any $\tilde{\lambda}\in M_{h}(0)$,
\begin{align}
\label{low_bound1}
& \tilde{a}_{h}(\tilde{\lambda},(P_{0,M}\varphi)\tilde{\lambda})\\
\nonumber
\geq & C\left(\Vert u_{h}^{\Lambda_{\epsilon}^{-1}\tilde{\lambda}}\Vert_{\mathcal{T}_{h}}^{2}+
\Vert \vert \tau -\frac{1}{2}\boldsymbol{\beta}\cdot\boldsymbol{n}\vert^{1/2}
(u_{h}^{\Lambda_{\epsilon}^{-1}\tilde{\lambda}} -\Lambda_{\epsilon}^{-1}\tilde{\lambda})\Vert_{\partial\mathcal{T}_{h}}^{2}\right)
\\
\nonumber
\geq & C h \Vert \vert \tau -\frac{1}{2}\boldsymbol{\beta}\cdot\boldsymbol{n}\vert^{1/2}
\Lambda_{\epsilon}^{-1}\tilde{\lambda}\Vert_{\partial\mathcal{T}_{h}}^{2}\quad\quad\text{ (by trace inequality and triangle inequality)}\\
\nonumber
\geq & C \Vert \tilde{\lambda}\Vert_{h}^{2}\geq C \Vert \tilde{\lambda}\Vert_{h}\cdot \Vert (P_{0,M}\varphi)\tilde{\lambda}\Vert_{h},
\end{align}
where
\begin{equation*}
\Vert \tilde{\lambda}\Vert_{h} = h^{1/2}\Vert \tilde{\lambda}\Vert_{\mathcal{E}_{h}},\quad\forall \tilde{\lambda}\in L^{2}(\mathcal{E}_{h}).
\end{equation*}

According to (\ref{local_energy_estimates}) and the definition of $\Lambda_{\epsilon}$ in (\ref{change_variables}), we have
\begin{equation}
\label{upp_bound1}
\tilde{a}_{h}(\tilde{\lambda},\tilde{\mu})\leq Ch^{-2}\Vert \tilde{\lambda}\Vert_{h}\cdot \Vert \tilde{\mu}\Vert_{h},
\quad \forall \tilde{\lambda}, \tilde{\mu}\in M_{h}(0).
\end{equation}

Using (\ref{low_bound1}) and (\ref{upp_bound1}), we can conclude the proof of Theorem~\ref{Thm_conditioning}.
%\end{proof} 

\section{Generating special meshes}
\label{section_appb}

As in \cite{CockburnDongGuzmanQian2010}, we do not intend to provide a detailed description how to generate 
meshes satisfying assumption (\ref{mesh_assumps}). We just give the main idea to generate the triangulation 
in the following, which is similar to the idea in \cite{CockburnDongGuzmanQian2010}.

\begin{itemize}
\item[(i)] Given a positive value $h$, we triangulate the outflow boundary $\Gamma^{+}=
\{x\in\partial\Omega:\boldsymbol{\beta}\cdot\boldsymbol{n}(x)>0\}$ in segments of size no bigger than $h$.

\item[(ii)] For each node $x_{0}$ on $\Gamma^{+}$, we apply the forward Euler time-marching method to the problem
\begin{equation*}
\frac{d}{dt}x(t) = -\boldsymbol{\beta}(x(t))\quad t>0, x(0)=x_{0},
\end{equation*}
to obtain the set of nodes $\{x_i\}^{N(x_{0})}_{i=1}$ such that the distance between $x_i$ and
$x_{i-1}$ is of order $h$ and $x_{N (x_0)}$ is the point on $\partial\Omega\setminus \Gamma^{+}$.

\item[(iii)] We add the vertices of $\partial\Omega\setminus \Gamma^{+}$ to the set of nodes. Then we
generate a triangulation.

\item[(iv)] We numerically check assumption (\ref{mesh_assumps}) and modify the simplexes which
violate the assumption by using an algorithm similar to that in \cite{Iliescu99}.

\end{itemize}

\end{document}